\newcommand{\Fp}{\mathbb{\overline{F}}_p}
\newcommand{\QQ}{\mathbb Q}
\newcommand{\OO}{\mathcal O}
\newcommand{\mbN}{\mathbb{N}}
\newcommand{\mbQ}{\mathbb{Q}}
\newcommand{\mbF}{\mathbb{F}}
\newcommand{\mbP}{\mathbb{P}}
\newcommand{\mcC}{\mathcal{C}}
\newcommand{\mcD}{\mathcal{D}}
\newcommand{\mcI}{\mathcal{I}}
\newcommand{\mcO}{\mathcal{O}}
\newcommand{\mfm}{\mathfrak{m}}
\DeclareMathOperator{\Supp}{Supp}
\DeclareMathOperator{\Aut}{Aut}
\DeclareMathOperator{\Pic}{Pic}
\DeclareMathOperator{\NE}{\overline{NE}}
\DeclareMathOperator{\Cone}{Cone}
\DeclareMathOperator{\coeff}{coeff}
\newcommand*{\coloneq}{\mathrel{\mathop:}=}
\theoremstyle{plain}
\newtheorem{theorem}{Theorem}[section]
\newtheorem{proposition}[theorem]{Proposition}
\newtheorem{lemma}[theorem]{Lemma}
\newtheorem{corollary}[theorem]{Corollary}
\newtheorem{question}[theorem]{Question}
\newtheorem{claim}[theorem]{Claim}
\theoremstyle{definition}
\newtheorem{example}[theorem]{Example}
\theoremstyle{remark}
\newtheorem{remark}[theorem]{Remark}
\title[On base point free theorem for lc threefolds over $\Fp$]
{On base point free theorem for log canonical threefolds over 
the algebraic closure of a finite field
}
\author{Diletta Martinelli}
\address{Department of Mathematics, Imperial College London, 180 Queen's
Gate, London, SW7 2AZ, UK.}
\email{d.martinelli12@imperial.ac.uk}
\author{Yusuke Nakamura}
\address{Graduate School of Mathematical Sciences, 
The University of Tokyo, 3-8-1 Komaba, Meguro-ku, Tokyo 153-8914, Japan.}
\email{nakamura@ms.u-tokyo.ac.jp}
\author{Jakub Witaszek}
\address{The London School of Geometry and Number Theory, Department of Mathematics, 
University College London, Gower Street, London, WC1E 6BT, UK.}
\email{jakub.witaszek.14@ucl.ac.uk}
\begin{document}
\begin{abstract}
We prove the base point free theorem for big line bundles on a three-dimensional log canonical 
projective pair defined over the algebraic closure of a finite field. 
This theorem is not valid for any other algebraically closed field. 
\end{abstract}

\subjclass[2010]{Primary 14E30; Secondary 14C20}
\keywords{base point free theorem, semiample line bundles, positive characteristic, finite fields}

\maketitle


\section{Introduction}

A line bundle is called semiample if some positive tensor power 
$L^{\otimes r}$ is generated by global sections. 
Semiample line bundles play an important role in algebraic geometry, 
because they determine morphisms of a variety into projective spaces. 
Therefore, one would like to find necessary and sufficient conditions for semiampleness. 
A semiample line bundle is necessarily nef, but the converse is false in general. 
However, if we assume that $L$ is the canonical bundle and is nef, 
then the abundance conjecture \cite[Conjecture 3.12]{KollarMori} states that $L$ must be semiample.
Furthermore, the base point free theorem \cite[Theorem 3.3]{KollarMori} asserts that a nef line bundle $L$ 
on a Kawamata log terminal projective pair $(X, \Delta)$ 
defined over an algebraically closed field of characteristic zero 
is semiample, when $L - (K_X + \Delta)$ is nef and big. 

In positive characteristic, questions regarding semiampleness are more difficult, 
due to the absence of a proof of the resolution of singularities for varieties of dimension greater than 
three and the failure of the Kawamata--Viehweg vanishing theorem. 
As such, the base point free theorem remains still unsolved in general. 
However, many partial results for threefolds may be obtained by reductions to the two-dimensional cases.

The base point free theorem in positive characteristic 
is known for big line bundles $L$ when $(X, \Delta)$ is a three-dimensional Kawamata log terminal projective pair 
defined over an algebraically closed field of characteristic larger than five
(see \cite{Birkar} and \cite{Xu}). 
Over $\Fp$, the algebraic closure of a finite field, 
a stronger result is due to  Keel \cite{Keel} 
who proved the base point free theorem for big line bundles $L$ when $(X, \Delta)$ 
is a three-dimensional projective log pair defined over $\Fp$ 
with all the coefficients of $\Delta$ less than  one.

In this paper, we generalize Keel's result to the cases 
when the coefficients of $\Delta$ may be equal to one. 
Our main theorem is the following. 

\begin{theorem}\label{theorem:main}
Let $(X, \Delta)$ be a three-dimensional projective log pair defined over $\Fp$. 
Assume that one of the following conditions holds:
\begin{itemize}
\item[(1)] $(X, \Delta)$ is log canonical, or
\item[(2)] all the coefficients of $\Delta$ are at most one and each irreducible component of 
$\Supp (\lfloor \Delta \rfloor)$ is normal. 
\end{itemize}
Let $L$ be a nef and big line bundle on $X$. 
If $L - (K_X + \Delta)$ is also nef and big, then $L$ is semiample. 
\end{theorem}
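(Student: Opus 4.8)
The plan is to reduce everything to Keel's base point free theorem \cite{Keel} together with the two-dimensional case, exploiting the fact that over $\Fp$ a numerically trivial line bundle is automatically torsion; this last property is what forces the ground field to be $\Fp$, and it is the reason no analogous statement can hold over another algebraically closed field.

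First I would dispose of case (1) by reducing it to case (2). Taking a dlt modification $f \colon Y \to X$ of the log canonical pair $(X,\Delta)$ (which exists for threefolds over $\Fp$), one obtains a dlt pair $(Y,\Delta_Y)$ with $K_Y + \Delta_Y = f^*(K_X + \Delta)$; since $(Y,\Delta_Y)$ is dlt, every irreducible component of $\Supp(\lfloor \Delta_Y \rfloor)$ is normal, so $(Y,\Delta_Y)$ satisfies condition (2). The pullbacks $f^*L$ and $f^*L - (K_Y+\Delta_Y) = f^*(L - (K_X+\Delta))$ remain nef and big, and semiampleness descends along the birational morphism $f$: if $f^*L$ is semiample it defines a contraction which, being numerically trivial on the $f$-exceptional curves, factors through $f$ and exhibits $L$ as semiample. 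Hence it suffices to treat case (2). From now on set $S \coloneq \lfloor \Delta \rfloor$, whose components are normal, and $D \coloneq L - (K_X+\Delta)$, which is nef and big.

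Next I would invoke Keel's theorem: since $L$ is big, its exceptional locus $\mathbb{E}(L)$ (the union of the subvarieties on which $L$ fails to be big) is a proper closed subset, and $L$ is semiample if and only if $L|_{\mathbb{E}(L)}$ is semiample. As the restriction of a semiample bundle is semiample, it is enough to prove that $L|_W$ is semiample, where $W \coloneq \mathbb{E}(L) \cup S$ has dimension at most two. On the boundary $S$ I would use adjunction: on each normal component $S_i$ one obtains a log canonical surface pair $(S_i, \Delta_{S_i})$ with $(K_X+\Delta)|_{S_i} = K_{S_i} + \Delta_{S_i}$, so that $L|_{S_i} - (K_{S_i}+\Delta_{S_i}) = D|_{S_i}$ is nef. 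The two-dimensional base point free theorem over $\Fp$ then yields that each $L|_{S_i}$, and therefore $L|_S$, is semiample; here the fact that nef, numerically trivial divisors on curves over $\Fp$ are torsion is precisely what lets the surface statement hold even though $D|_{S_i}$ need not be big.

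It remains to propagate semiampleness from $S$ to all of $W$, and this is where I expect the main difficulty to lie. The pair $(X, \Delta - S)$ is klt, and $L - (K_X + (\Delta-S)) = D + S$ fails to be nef only along curves contained in $S$: if $(D+S)\cdot C < 0$ then $S \cdot C < 0$, forcing $C \subseteq S$. Thus the sole obstruction to applying Keel's earlier klt base point free theorem is concentrated on $S$, exactly where $L$ has just been shown to be semiample. The hard part is to convert this observation into an honest gluing, showing that the semiample fibration on $S$ is compatible with the big, klt behaviour of $L$ on the remainder of $W$, so that the two fit into a single contraction of $W$. Over $\Fp$ this compatibility can be enforced because the relevant numerically trivial bundles along $S \cap \overline{(W \setminus S)}$ are torsion, which is the essential input unavailable over other algebraically closed fields. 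Making this step rigorous, by applying Keel's theorem to $W$ and descending the resulting contraction, is the crux of the argument.
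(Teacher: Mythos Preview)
Your proposal has two genuine gaps, and the paper's approach is organized quite differently from yours.

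\textbf{The dlt modification step is not available.} Your reduction of case (1) to case (2) presupposes the existence of a dlt modification for three-dimensional log canonical pairs over $\Fp$. At the time of the paper (and still, in small characteristic) this is not known without a restriction such as $p>5$; the threefold MMP results of Birkar and Xu that the paper cites all carry such hypotheses. The paper therefore does \emph{not} reduce (1) to (2). Instead it treats the two cases in parallel: in case (2) the reducible boundary surface $S=\lfloor\Delta\rfloor$ satisfies a condition~$(\star)$ (the conductor map on normalizations is componentwise an isomorphism), while in case (1) the log canonical hypothesis forces $S$ to be regular or nodal in codimension one, which gives a weaker condition~$(\star\star)$ (fibres of the conductor map have length at most two). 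Both conditions feed into the same surface gluing theorem.

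\textbf{Semiampleness on the components does not give semiampleness on $S$.} You write that the surface base point free theorem yields ``each $L|_{S_i}$, and therefore $L|_S$, is semiample.'' That ``therefore'' is exactly the hard part of the paper and is not automatic even when every $S_i$ is normal: one must produce sections on the normalization $\overline S$ whose restrictions to the conductor $\mathcal C$ descend to $\mathcal D\subset S$. The paper devotes an entire section to this (Theorem~\ref{theorem:2dim}), using the adjunction inequality $\Supp(\mathcal C^{(1)})\subset\Supp(\lfloor\Delta_{\overline S}\rfloor)$, the intersection bound $G_i\cdot\Delta_{\overline{S_i}}\le 2$ on a generic fibre of the semiample fibration, a finite-group/quotient argument on the conductor curves (Lemma~\ref{lemma:double}), and the $\Fp$-specific Lemmas~\ref{lemma:0dim}--\ref{lemma:fin_group} to force agreement of sections up to powers. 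None of this is captured by your sentence.

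For comparison, the paper's global architecture is: first prove (Theorem~\ref{theorem:reduction}) that $L$ is semiample once $L|_{\Supp\lfloor\Delta\rfloor}$ is, via Keel's reduction $L\sim_{\mathbb Q}A+E$ and an induction over the components of $E_{\mathrm{red}}$, at each step verifying Keel's gluing hypothesis by the estimate $F\cdot\Delta_{\overline{E_i}}<2$; then prove (Theorem~\ref{theorem:2dim}) that $L|_S$ is indeed semiample using the conductor machinery above. Your outline correctly identifies that the crux is a gluing problem and that torsion of numerically trivial bundles over $\Fp$ is the key special input, but the two concrete mechanisms that make the gluing work---the fibre-length-two bound coming from adjunction, and the descent through the conductor via finite group actions---are missing from your plan.
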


The next corollary follows easily from Theorem \ref{theorem:main}.  
\begin{corollary}\label{corollary:abundance}
Let $(X, \Delta)$ be a three-dimensional log canonical projective pair defined over $\Fp$. 
Then the following hold:
\begin{enumerate}
\item[(1)] If $K_X + \Delta$ is nef and big, then $K_X + \Delta$ is semiample. 
\item[(2)] If $-(K_X + \Delta)$ is nef and big, then $-(K_X + \Delta)$ is semiample.
\end{enumerate}
\end{corollary}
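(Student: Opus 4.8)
The plan is to deduce both statements directly from Theorem \ref{theorem:main}(1) by taking the line bundle $L$ to be an appropriate positive integral multiple of $\pm(K_X + \Delta)$. Since $(X, \Delta)$ is log canonical, $K_X + \Delta$ is $\mathbb{Q}$-Cartier, so there is a positive integer $r$ with $r(K_X + \Delta)$ Cartier; I will use such multiples to produce genuine line bundles to which the theorem applies, and then invoke the elementary fact that a $\mathbb{Q}$-Cartier divisor is semiample if and only if one (equivalently every) positive integral multiple of it is.

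For part (2), I would set $L = -n(K_X + \Delta)$ for a positive integer $n$ chosen so that $L$ is Cartier. By hypothesis $-(K_X+\Delta)$ is nef and big, hence so is its positive multiple $L$, and moreover
\[
L - (K_X + \Delta) = -(n+1)(K_X + \Delta) = (n+1)\bigl(-(K_X+\Delta)\bigr)
\]
is again nef and big. Theorem \ref{theorem:main}(1) then shows that $L$ is semiample, and therefore so is $-(K_X+\Delta)$.

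For part (1), the same idea applies, but one must take a little care: the naive choice $L = K_X + \Delta$ yields $L - (K_X + \Delta) = 0$, which is nef but not big, so the hypothesis of the theorem fails. To remedy this I would instead take $L = m(K_X + \Delta)$ for an integer $m \geq 2$ with $L$ Cartier. Then $L$ is nef and big, while
\[
L - (K_X + \Delta) = (m-1)(K_X + \Delta)
\]
is nef and big because $m - 1 \geq 1$ and $K_X + \Delta$ is nef and big. Theorem \ref{theorem:main}(1) now applies and gives that $L = m(K_X + \Delta)$ is semiample, whence $K_X + \Delta$ is semiample.

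I do not expect any genuine difficulty here: the only point requiring attention is the degenerate case just noted in part (1), where bigness of $L - (K_X + \Delta)$ forces the multiple $m \geq 2$ rather than $m = 1$, together with the routine clearing of denominators needed to ensure that the divisor to which Theorem \ref{theorem:main} is applied is an honest line bundle rather than merely a $\mathbb{Q}$-Cartier divisor.
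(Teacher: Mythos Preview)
Your proposal is correct and follows essentially the same approach as the paper, which simply takes $L = 2(K_X + \Delta)$ for (1) and $L = -(K_X + \Delta)$ for (2). The only difference is that you clear denominators explicitly, whereas the paper implicitly relies on Remark~\ref{remark:qcartier} to allow $L$ to be merely $\mathbb{Q}$-Cartier.
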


\begin{remark}\label{remark:counterexample}
Theorem \ref{theorem:main} does not hold over fields $k \not = \Fp$ even in the two-dimensional case 
(Example \ref{example:tanaka}). 
Corollary \ref{corollary:abundance} (2) also does not hold over algebraically closed fields $k \not = \Fp$ 
(Example \ref{example:gongyo}). 

In Example \ref{example:counterex}, 
we give a counterexample to Theorem \ref{theorem:main} 
if one does not impose any conditions on the effective $\mbQ$-divisor $\Delta$. 
It is not clear whether the theorem remains true if we only assume that all the coefficients of $\Delta$ are at most one.
\end{remark}

We also prove the base point free theorem 
for normal surfaces defined over $\Fp$ without assuming bigness. 
\begin{theorem}
\label{theorem:genshokurov}
Let $X$ be a normal projective surface defined over $\Fp$ and let $\Delta$ be an effective $\mbQ$-divisor. 
Assume that we have a nef line bundle $L$ on $X$ such that $L -(K_X + \Delta)$ is also nef. 
Then $L$ is semiample.
\end{theorem}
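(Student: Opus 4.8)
The plan is to deduce everything from Keel's theorem \cite{Keel}, which over $\Fp$ reduces the semiampleness of a nef line bundle $L$ to its behaviour on the exceptional (non-big) locus $\mathbb{E}(L)$, combined with the special feature of $\Fp$ that any numerically trivial line bundle is torsion: its class lies in $\Pic^0(X)(\Fp)$, which is a torsion group since every point is defined over a finite field. In particular, a nef line bundle on a projective scheme of dimension at most one over $\Fp$ is automatically semiample, as on each one-dimensional component it has either positive degree, hence is ample, or degree zero, hence is numerically trivial and torsion. I would organise the argument according to the numerical dimension $\nu(L) \in \{0,1,2\}$.

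If $\nu(L) = 0$ then $L$ is numerically trivial and the remark above finishes the proof. If $\nu(L) = 2$ then $L$ is big, so $\mathbb{E}(L)$ is a proper closed subset of dimension at most one; since $L|_{\mathbb{E}(L)}$ is nef it is semiample by the one-dimensional case, and Keel's theorem upgrades this to semiampleness of $L$. Note that this case uses neither $\Delta$ nor the nefness of $L-(K_X+\Delta)$.

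The substance of the theorem is the case $\nu(L) = 1$, where $\mathbb{E}(L) = X$ and Keel's reduction is vacuous, so one must genuinely construct a fibration. Here the hypothesis enters through a single inequality: from $L\cdot\bigl(L-(K_X+\Delta)\bigr) \ge 0$ and $L^2 = 0$ we obtain $L\cdot K_X \le -\,L\cdot\Delta \le 0$. Replacing $X$ by its minimal resolution $\pi \colon \widetilde{X} \to X$, which changes neither $H^0(\widetilde X, m\pi^*L) = H^0(X, mL)$ nor the intersection numbers $\pi^*L\cdot K_{\widetilde X} = L\cdot K_X$ and preserves semiampleness, I would run the Riemann--Roch estimate $\chi(\widetilde X, m\pi^*L) = \chi(\mathcal{O}_{\widetilde X}) - \tfrac{m}{2}\,L\cdot K_X$. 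When $L\cdot K_X < 0$ the class $K_{\widetilde X} - m\pi^*L$ has negative intersection with the nef class $\pi^*L$, hence is not effective and $h^2$ vanishes, so $h^0(X, mL) \ge \chi$ grows linearly; a pencil in $|mL|$ then yields, after resolving its base points, a morphism to a curve along whose fibres $L$ is numerically trivial, and Keel's theorem makes $L$ semiample.

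The remaining, and hardest, configuration is the borderline $L\cdot K_X = 0$, which forces $L\cdot\Delta = 0$ and $L\cdot\bigl(L-(K_X+\Delta)\bigr) = 0$, so that Riemann--Roch no longer forces sections to grow. I would extract the geometry from the Hodge index theorem on $\widetilde X$: writing $A = L - (K_X+\Delta)$, the relations $L^2 = 0$, $L\cdot A = 0$ and $L\not\equiv 0$, together with the nefness of $A$, force $A \equiv cL$ for some $c \ge 0$, and hence $K_X+\Delta \equiv (1-c)L$. For $c < 1$ the class $K_X+\Delta$ is nef of numerical dimension one, and for $c > 1$ the class $-(K_X+\Delta)$ is; in either case the problem reduces to the semiampleness of a (anti-)log canonical class of numerical dimension one on a surface over $\Fp$, i.e.\ to the abundance theorem for surfaces in positive characteristic, after which Keel's theorem transports semiampleness to the numerically proportional $L$. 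I expect the sub-case $c = 1$, where $K_X+\Delta \equiv 0$, to be the main obstacle, since there the positivity hypothesis carries no information and one cannot transfer from $K_X+\Delta$; this forces a separate appeal to the structure theory of nef line bundles on log Calabi--Yau surfaces over $\Fp$ rather than to a direct Riemann--Roch or abundance reduction.
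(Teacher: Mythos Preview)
Your treatment of $\nu(L)\in\{0,2\}$ and of $\nu(L)=1$ with $L\cdot K_X<0$ is correct and parallels the paper's opening reductions. The Hodge index step in the borderline case $L\cdot K_X=0$ is also valid: from $L^2=0$, $L\cdot A=0$, $L\not\equiv 0$ and $A$ nef one indeed obtains $A^2=0$ and hence $A\equiv cL$ for some rational $c\ge 0$.

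The genuine gap is the subsequent case split. For $c<1$ you invoke abundance for $K_X+\Delta$, but $(X,\Delta)$ carries no log canonical hypothesis; over $\Fp$ this step is rescued only by Tanaka's theorem \cite[Theorem~12.6]{Tanaka} (take $q=1$ so that $q(K_X+\Delta)-K_X=\Delta$ is effective), which is precisely the paper's central black box and could just as well be applied to $L$ directly. For $c>1$ the reduction fails outright: there is no anti-abundance theorem, and feeding either $L$ or $-(K_X+\Delta)$ into Tanaka's criterion requires the $\mathbb{Q}$-effectivity of a positive multiple of $L$, which is what you are trying to prove. You also explicitly leave $c=1$ open. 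Thus the whole range $c\ge 1$ is unresolved by your outline. The paper does not attempt any such reduction: after reaching the same borderline numerics it argues by the Kodaira dimension of $X$ and the Enriques classification---deriving a contradiction for $\kappa(X)=2$, using the elliptic fibration for $\kappa(X)=1$ to show $D$ is numerically a fibre class and hence $\mathbb{Q}$-effective, handling abelian, hyperelliptic and quasi-hyperelliptic surfaces directly for $\kappa(X)=0$ (K3 and Enriques being excluded by $\chi(\mathcal{O}_X)\le 0$), and analysing irrational ruled surfaces for $\kappa(X)=-\infty$, ultimately reducing to minimal ruled surfaces over an elliptic curve. This classification work is the substance of the proof and is exactly what covers the cases your sketch leaves open; the ``structure theory of log Calabi--Yau surfaces'' you gesture at for $c=1$ is not a separate input but a piece of this same analysis.
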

\begin{remark}
Note that it is not true in general that nef line bundles on smooth surfaces over $\Fp$ are semiample 
(see Totaro's example in \cite{Totaro}).
\end{remark}
\begin{remark}\label{remark:qcartier}
Theorem \ref{theorem:main} and Theorem \ref{theorem:genshokurov} 
hold if we assume that $L$ is only a $\mbQ$-Cartier $\mbQ$-divisor. 
Note, that if $L$ and $L - (K_X+\Delta)$ are big and nef, then also \[
nL - (K_X + \Delta) = (n-1)L + \left(L - \left(K_X+\Delta\right)\right)
\]
is big and nef for any integer $n \geq 1$.  
\end{remark}

The paper is organized as follows:
in Section \ref{section:pre}, we review some definitions and facts from 
minimal model theory and about the conductor scheme. 
Further, we list some results from Keel \cite{Keel} and show lemmas necessary for the proof
of the main theorem.
In Section \ref{section:surface}, we prove the base point free theorem for surfaces under weaker assumptions 
(Theorem \ref{theorem:genshokurov}). 
In Section \ref{section:reduction}, generalizing the proof of \cite[Theorem 0.5]{Keel}, 
we reduce Theorem \ref{theorem:main} to showing that the line bundle $L |_{\Supp \lfloor \Delta \rfloor}$ is semiample 
(Theorem \ref{theorem:reduction}). 
If $\Supp \lfloor \Delta \rfloor$ is irreducible, 
we know that $L |_{\Supp \lfloor \Delta \rfloor}$ is semiample by Theorem \ref{theorem:genshokurov}. 
The non-irreducible case is treated in Section \ref{section:glue}. 
In order to generalize Theorem \ref{theorem:genshokurov} to the non-irreducible surfaces, 
we combine an idea from Fujino \cite{fujino1} and Tanaka \cite{TanakaAbundanceForSlc}, together with special
properties of varieties defined over $\Fp$, which are proved in Section \ref{section:pre}. 
In Section \ref{section:proof}, we complete the proof of Theorem \ref{theorem:main} and of Corollary 
\ref{corollary:abundance}. 
In Section \ref{section:example}, we give counterexamples as 
it was stated in Remark \ref{remark:counterexample}.

\subsection*{Notation and conventions}
\begin{itemize}
\item When we work over a normal variety $X$, we often identify a line bundle $L$ 
with the divisor corresponding to $L$. 
For example, we use the additive notation $L + A$ for a line bundle $L$ and a divisor $A$. 

\item Following the notation of \cite{Keel}, for a morphism $f \colon X \to Y$, a line bundle $L$ on $Y$, and a section $s \in H^0 (Y, L)$, 
we denote by $L |_X$ and $s |_X$ the pullbacks $f^* L$ and $f^* s$, respectively. 

\item With the same notation as above, 
we say that a section $t \in H^0(X, L|_X)$ \textit{descends to $Y$} if there exists a section 
$s \in H^0(Y,L)$ such that $f^* s = t$. 

\item Let $X$ be a reduced scheme of finite type over a field, 
$X = \bigcup X_i$ the decomposition into irreducible components, and 
$\overline{X_i} \to X_i$ the normalizations. 
Then we define the \textit{normalization of} $X$ as the composition 
$\bigsqcup \overline{X_i} \to \bigsqcup X_i \to X$. 

\item Let $X$ be a scheme and $F \subset X$ a closed subscheme. 
Let $L$ be a line bundle on $X$ and $s \in H^0(X, L)$ its section. 
We say that \textit{$s$ is nowhere vanishing on $F$} if $s | _{\{x\}}$ is not zero 
as an element in the one-dimensional vector space $H^0 (\{x\}, L|_{\{x\}})$ 
for any closed point $x \in F$. 

\item We say that a line bundle $L$ on $X$ is \textit{semiample} when the linear system $|mL|$ is base point free
for a sufficiently large and divisible positive integer $m$. 
When $L$ is semiample, the surjective map $f \colon X \to Y$, defined by $|mL|$, satisfies 
$f_* \mcO _X = \mcO _Y$ for a sufficiently large and divisible positive integer $m$. 
We call $f$ \textit{the map associated to $L$}. 
\end{itemize}

\section*{Acknowledgements}
We would like to thank Paolo Cascini and Yoshinori Gongyo for suggesting this problem and for
their constant support and guidance.

The project started during the Pragmatic Research Summer School 2013 in Catania and 
part of the work carried on during the visit of the first and third author to the University of Tokyo.
We are grateful to the University of Catania and organizers of Pragmatic Research School 
as well as to Yujiro Kawamata and Yoshinori Gongyo for the invitation to the University of Tokyo.
	 
We are grateful to Andrea Fanelli, Enrica Floris, Atsushi Ito, and Hiromu Tanaka, for useful comments and suggestions. 

The first author is supported by a Roth studentship.
The second author is supported by the Grant-in-Aid for Scientific Research
(KAKENHI No. 25-3003) 
and the Program for Leading Graduate Schools, MEXT, Japan. 
The third author was supported by Bonn International Graduate School's Pre-PhD scholarship.

\section{Preliminaries}\label{section:pre}
\subsection{Log pairs}\label{section:log_pair}
A \textit{log pair} $(X, \Delta)$ is a normal variety $X$ and an effective $\mbQ$-divisor $\Delta$ such that 
$K_X + \Delta$ is $\mbQ$-Cartier. 

For a proper birational morphism $f \colon X' \to X$ from a normal variety $X'$, 
we write 
\[
K_{X'} + \sum _i a_i E_i = f^* (K_X + \Delta), 
\]
where $E_i$ are prime divisors. 
We say that the pair $(X, \Delta)$ is \textit{log canonical} 
if $a_i \le 1$ 
for any proper birational morphism $f$. 
Further, 
we say that the pair $(X, \Delta)$ is \textit{Kawamata log terminal} 
if $a_i < 1$ 
for any proper birational morphism $f$.

\subsection{Conductor schemes}

Let $X$ be a reduced scheme of finite type over a field and $\overline{X} \to X$ its normalization. 
We identify the sheaf of rings $\mcO _X$ as the subring of $\mcO_{\overline{X}}$. 
Let $\mcI \subset \mcO _X$ be the maximal ideal sheaf satisfying
$\mcI \mcO_{\overline{X}} \subset \mcO_{X}$. 
The \textit{conductor} of $X$ is the subscheme $\mcD \subset X$ defined by $\mcI$. 
By abuse of notation, the subscheme $\mcC \subset \overline{X}$ defined by 
$\mcI \mcO_{\overline{X}}$ will also be called the conductor. 

The notion of conductor is important to descend sections, 
because of the following remark:
\begin{remark}\label{remark:conductor}
Let $\mcC \subset \overline{X}$, $\mcD \subset X$ be the conductors
and let $L$ be a line bundle on $X$. 
\[\xymatrix{
\mcC \ar[d]  \ar@<-0.5mm>@{^(->}[r] & \overline{X} \ar[d] \\
\mcD \ar@<-0.5mm>@{^(->}[r] & X \\
}\]

\noindent
By definition of the conductor, we have the following exact sequence
\[
0 \longrightarrow H^0 (X, L) \longrightarrow 
H^0 (\overline{X}, L | _{\overline{X}}) \oplus H^0 (\mcD , L|_{\mcD})
\longrightarrow H^0(\mcC, L|_{\mcC}), 
\]
where the second map is defined by $t \mapsto (t|_{\overline{X}}, t|_{\mcD})$, and 
the third map is defined by $(t,u) \mapsto t | _{\mcC} - u| _{\mcC}$. 
Therefore, a section $s \in H^0 (\overline{X}, L | _{\overline{X}})$
descends to $X$ if and only if $s|_{\mcC}$ descends to $\mcD$. 
\end{remark}

\subsection{Adjunction formula} \label{subsection:adjunction}
Let $(X, \Delta)$ be a log pair and $S$ the union of the supports 
of some of the divisors with coefficient one in $\Delta$. 
Let $p \colon \overline{S} \to S$ be the normalization of $S$. 
Then there exists an effective $\mbQ$-divisor $\Delta _{\overline{S}}$ on $\overline{S}$ such that 
\[
K_{\overline{S}} + \Delta _{\overline{S}} = (K_X + \Delta)|_{\overline{S}}
\]
holds (see for instance \cite[Definition 4.2]{Kollar}). 

We denote by $C$ the possibly non-reduced divisor on $\overline{S}$ 
corresponding to the codimension one part of $\mcC$, 
where $\mcC \subset \overline{S}$ is the conductor of $S$. 

When $X$ is $\mbQ$-factorial, it follows that $C \le \Delta _{\overline{S}}$ by \cite[Theorem 5.3]{Keel}. 
In this paper, we use the following proposition, which only states 
$\Supp (C) \subset  \Supp (\lfloor \Delta _{\overline{S}} \rfloor)$, but 
is valid even for a non-$\mbQ$-factorial variety $X$. 
\begin{proposition}\label{proposition:adjunction}
Let $(X, \Delta)$ be a log pair, and 
let $S$ be the union of the supports 
of some of the divisors with coefficient one in $\Delta$. 
Let $p \colon \overline{S} \to S$ be the normalization of $S$, and 
let $\Delta _{\overline{S}}$ be an effective $\mbQ$-divisor on $\overline{S}$
defined by the adjunction as above. 
Further, we denote by $C$ the (possibly non-reduced) divisor on $\overline{S}$ 
corresponding to the codimension one part of $\mcC$, 
where $\mcC \subset \overline{S}$ is the conductor of $S$. 
Then the following hold: 
\begin{enumerate}
\item $\Supp (C) \subset  \Supp (\lfloor \Delta _{\overline{S}} \rfloor)$. 
\item Let $D_1, \ldots, D_c$ be prime divisors with coefficient greater 
than or equal to one in $\Delta$, and 
let $T = \bigcup _{1 \le i \le c} \Supp (D_i)$. 
Assume that each $D_i$ satisfies $\Supp (D_i) \not \subset S$. 
Then, the codimension one part of $p^{-1}(S \cap T)$ 
is contained in $\Supp (\lfloor \Delta _{\overline{S}} \rfloor)$. 
\end{enumerate}
\end{proposition}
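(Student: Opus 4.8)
The plan is to reduce both statements to a single assertion about one coefficient of the different: for a prime divisor $B$ on $\overline{S}$, one has $\coeff_B (\Delta_{\overline{S}}) \geq 1$ whenever either (1) $B$ is a component of $C$, or (2) $B$ is a component of the codimension one part of $p^{-1}(S \cap T)$. Both conclusions follow at once, since $\coeff_B(\Delta_{\overline{S}}) \geq 1$ is precisely the condition $B \subseteq \Supp(\lfloor \Delta_{\overline{S}}\rfloor)$. Each claim concerns only the generic point $\eta_B$ of $B$, which is a codimension one point of the normal variety $\overline{S}$, hence a codimension two point of $X$; there $\mcO_{\overline{S}, \eta_B}$ is a DVR, so the coefficient in question is genuinely local on $X$ near the generic point of $p(B)$.

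First I would cut down to surfaces. Since $p(B)$ has codimension two in $X$ and the base field is infinite, intersecting $X$ with $\dim X - 2$ general hyperplane sections through a general point of $p(B)$ produces, by a Bertini/Seidenberg argument, a normal surface $Y$ on which $S_Y \coloneq S \cap Y$ is a reduced curve and $(Y, \Delta|_Y)$ is again a log pair. The key point is that the different is compatible with such general restrictions: near the relevant points the normalization of $S_Y$ is $\overline{S} \times_X Y$, the conductor of $S_Y$ is the restriction of $\mcC$, and $\Delta_{\overline{S_Y}}$ agrees with the restriction of $\Delta_{\overline{S}}$. Thus $\coeff_B(\Delta_{\overline{S}})$ equals the coefficient of the different of $S_Y$ at the corresponding point $P \in \overline{S_Y}$, and it suffices to prove the two claims for a reduced curve $S_Y$ on a normal surface $Y$.

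For claim (2) the surface argument is direct. Since $\Supp(D_i) \not\subset S$, each $D_i$ passing through $p(P)$ meets $S$ properly, and the Poincar\'e residue computation of $(K_Y + \Delta|_Y)|_{\overline{S_Y}}$ picks up, from $D_i$, a contribution equal to $\coeff_{D_i}(\Delta)$ times the local intersection multiplicity of $D_i$ and $S_Y$ at $p(P)$, a positive integer. As $\coeff_{D_i}(\Delta) \geq 1$ and the remaining part of the different is effective, we obtain $\coeff_P(\Delta_{\overline{S_Y}}) \geq 1$, as desired.

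The main work is claim (1): showing that the conductor contributes coefficient at least one to the different. The hard part is to do this without the $\mbQ$-factoriality hypothesis of \cite[Theorem 5.3]{Keel}, which precludes simply invoking that result. I would argue by the explicit residue computation on the normalized curve: writing $K_Y + \Delta|_Y$ through a local generator of $\omega_Y(S_Y)$ and taking its Poincar\'e residue along $S_Y$, the pole order of the pulled-back residue at a conductor point $P$ is at least one, because there the normalization map $\overline{S_Y} \to S_Y$ fails to be a local isomorphism, so each branch acquires at least a simple pole, exactly as in the nodal and cuspidal model computations. This local positivity of the conductor's contribution is insensitive to $\mbQ$-factoriality, which is what lets us treat arbitrary normal $X$ while retaining only the support containment rather than the full inequality $C \leq \Delta_{\overline{S}}$.
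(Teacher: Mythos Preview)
Your approach diverges from the paper's and leaves a real gap in part (1). The paper does not compute residues directly; it argues by dichotomy on log canonicity at the generic point $\eta_{p(V)}$: if $(X,\Delta)$ is not lc there, then \cite[Proposition~4.5(2)]{Kollar} gives $\coeff_V(\Delta_{\overline S}) > 1$ outright; if it is lc, then $S$ is nodal at $\eta_{p(V)}$ and $\coeff_V(\Delta_{\overline S}) = 1$ by the analysis in the proof of \cite[Proposition~4.5(6)]{Kollar}. Your residue sketch, by contrast, presumes a ``local generator of $\omega_Y(S_Y)$,'' but on a merely normal surface $Y$ this sheaf need not be invertible at the relevant point---that is precisely the non-$\mbQ$-factorial obstruction you set out to circumvent, and cutting by hyperplanes does not restore it. The slogan ``each branch acquires at least a simple pole'' is correct when $K_Y + S_Y$ is Cartier (then the different of $S_Y$ literally equals the conductor), but in general the different involves the local Cartier index $m$, and your argument does not explain how the conductor contribution combines with the $\tfrac{m-1}{m}$ term coming from the singularity of $Y$. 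To make this rigorous you would need an index-one cover or Koll\'ar's explicit formula, at which point you are re-deriving \cite[Proposition~4.5]{Kollar} rather than bypassing it.

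For (2) your direct estimate is salvageable but misstated: the contribution from $D_i$ to the different at $P$ is not $\coeff_{D_i}(\Delta)$ times a positive \emph{integer} but rather $\coeff_{D_i}(\Delta)\cdot c/m$ for a positive integer $c$ and the local index $m$; one must add the base term $\tfrac{m-1}{m}$ to reach $\ge 1$. The paper takes a structurally cleaner route: when all $\coeff_{D_i}(\Delta)=1$, observe that $S\cap T$ lies in the conductor of the normalization of $S\cup T$ and apply part (1) with $S$ replaced by $S\cup T$, reducing (2) to (1) in one line; the remaining case $\coeff_{D_i}(\Delta)>1$ forces $(X,\Delta)$ to be non-lc at $\eta_{p(V)}$ and is handled by the same dichotomy as above.
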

\begin{proof}
First, we prove (1). 
Let $V \subset \overline{S}$ be a codimension one subvariety such that $V \subset \mcC$. 
It is sufficient to show $\coeff _V \Delta _{\overline{S}} \ge 1$. 
When $(X, \Delta)$ is not log canonical at the generic point $\eta _{p(V)}$ of $p(V)$, 
we have $\coeff _V \Delta _{\overline{S}} > 1$ (see \cite[Proposition 4.5 (2)]{Kollar}). 
Hence, we may assume that $(X, \Delta)$ is log canonical at $\eta _{p(V)}$. 
In this case, $S$ has a node at $\eta _{p(V)}$ and $\coeff _V \Delta _{\overline{S}} = 1$ 
(see the proof of \cite[Proposition 4.5 (6)]{Kollar}).  

Next, we prove (2). 
Let $V \subset \overline{S}$ be a codimension one subvariety such that $V \subset p^{-1}(S \cap T)$. 
It is sufficient to show $\coeff _V \Delta _{\overline{S}} \ge 1$. 
Since the problem is local around $V$, we may assume that $p(V) \subset \Supp (D_i)$ for all $i$. 
If $\coeff _{D_i} \Delta > 1$ for some $i$, then $(X, \Delta)$ is not log canonical at the generic point $\eta _{p(V)}$ of $p(V)$. 
In this case, we have  $\coeff _V \Delta _{\overline{S}} > 1$ as above. 
Hence, we may assume that $\coeff _{D_i} \Delta = 1$ for all $i$. 
Note that $S \cap T$ is contained in the conductor of the normalization of $S \cup T$. 
Therefore, we conclude the proof by applying (1) to $S \cup T$. 
\end{proof}

\subsection{Some properties of varieties over $\Fp$}
The following fact distinguishes $\Fp$ from other fields of positive characteristic. 
For the proof, see for instance \cite[Lemma 2.16]{Keel}. 

\begin{proposition}\label{fact::torsionpic0}
The Picard scheme $\Pic ^0 X$ is a torsion group when $X$ is a projective scheme defined over $\Fp$. 
In particular, any numerically trivial Cartier divisor is $\mbQ$-linearly trivial. 
\end{proposition}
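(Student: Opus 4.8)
The plan is to reduce to the finite-field case by a spreading-out argument and then exploit the fact that a scheme of finite type over a finite field has only finitely many rational points.

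First I would descend the data to a finite field. Since $X$ is a projective scheme over $\Fp = \overline{\mbF}_q$, it is defined over some finite field $\mbF_q$; that is, there is a projective scheme $X_0$ over $\mbF_q$ with $X = X_0 \times_{\Spec \mbF_q} \Spec \Fp$. Invoking Grothendieck's theory of the Picard scheme, $\Pic ^0 X_0$ is represented by a group scheme of finite type over $\mbF_q$, and its formation is compatible with the base change $\mbF_q \to \Fp$, so that $\Pic ^0 X = \Pic ^0 X_0 \times_{\mbF_q} \Fp$. The next step uses the elementary observation that, for any scheme $Y$ of finite type over a finite field $\mbF_{q^n}$, the set $Y(\mbF_{q^n})$ is finite. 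Applying this to $Y = \Pic ^0 X_0$ and writing $\Fp = \bigcup_{n \ge 1} \mbF_{q^n}$, one gets
\[
(\Pic ^0 X)(\Fp) = \bigcup_{n \ge 1} (\Pic ^0 X_0)(\mbF_{q^n}),
\]
an increasing union of the finite subgroups $(\Pic ^0 X_0)(\mbF_{q^n})$. Hence every element lies in a finite subgroup and has finite order, which shows that $\Pic ^0 X$ is a torsion group.

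For the final assertion, let $D$ be a numerically trivial Cartier divisor. Its class lies in the subgroup $\Pic ^\tau X$ of numerically trivial line bundles. Since the N\'eron--Severi group $\Pic X / \Pic ^0 X$ is finitely generated, its torsion subgroup $\Pic ^\tau X / \Pic ^0 X$ is finite; combined with the torsionness of $\Pic ^0 X$ just proved, this forces $\Pic ^\tau X$ to be a torsion group as well. Therefore $mD \sim 0$ for some positive integer $m$, that is, $D \Lq 0$.

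The routine part here is the formal bookkeeping with the union of finite groups; the genuine obstacle is marshalling the correct structural inputs for a possibly singular, non-reduced, or geometrically disconnected projective scheme, namely the representability of $\Pic ^0$ by a finite-type group scheme, its compatibility with base change, and the finite generation of the N\'eron--Severi group. Once these are granted, the torsion statement is automatic from the finiteness of rational points over finite fields, which is exactly the property that singles out $\Fp$ among algebraically closed fields.
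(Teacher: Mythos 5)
The paper does not prove this proposition itself but cites Keel [Lemma 2.16], whose argument is exactly your spreading-out strategy: descend $X$ and the line bundle to a finite field $\mbF_q$, use that the relevant Picard functor is represented by a group scheme of finite type, and conclude torsionness because its points over each $\mbF_{q^n}$ form a finite group. Your proof is correct and takes essentially that same route; the only cosmetic difference is that Keel works directly with $\Pic^{\tau}$ (of finite type by SGA~6), which handles both assertions at once rather than splitting off the N\'eron--Severi torsion as a separate step.
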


We need the following lemmas in Section \ref{section:glue}. 

\begin{lemma}\label{lemma:0dim}
Let $X$ be a proper scheme over $\overline{\mbF} _p$. 
Let $s_1, s_2 \in H^0(X, \mcO _X)$ be sections of the structure sheaf. 
Assume that $s_1$ and $s_2$ are nowhere vanishing on $X$.
Then there exists $n \ge 1$ such that $s_1 ^n = s_2 ^n$ in $H^0(X, \mcO _X)$. 
\end{lemma}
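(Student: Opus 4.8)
The plan is to translate the statement into commutative algebra and reduce it to showing that the unit group of a certain finite-dimensional $\Fp$-algebra is a torsion group. Set $A \coloneq H^0(X, \mcO_X)$. Since $X$ is proper over the field $\Fp$, the finiteness theorem for coherent cohomology shows that $A$ is a finite-dimensional $\Fp$-algebra, hence an Artinian ring. Thus $A$ decomposes as a finite product $A = \prod_{j} A_j$ of local Artinian rings $(A_j, \mfm_j)$, and each residue field $A_j / \mfm_j$ is a finite extension of the algebraically closed field $\Fp$, hence equal to $\Fp$. The idempotents of this decomposition split $X$ into a disjoint union of nonempty open and closed subschemes $X_j$ with $A_j = H^0(X_j, \mcO_{X_j})$; write $s_{i,j} \in A_j$ for the component of $s_i$. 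The equality $s_1^n = s_2^n$ can be checked factor by factor, and since $a^n = b^n$ implies $a^{kn} = b^{kn}$, it suffices to produce, for each $j$, an integer $n_j \ge 1$ with $s_{1,j}^{n_j} = s_{2,j}^{n_j}$ and then take $n$ to be a common multiple.

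Next I would show that the hypothesis forces each $s_{i,j}$ to be a unit. Each $X_j$ is a nonempty scheme of finite type over $\Fp$, so it contains a closed point $x_j$, where $\kappa(x_j) = \Fp$. The value $s_i|_{\{x_j\}}$ is the image of $s_{i,j}$ under the composite $A_j \to \mcO_{X_j, x_j} \to \kappa(x_j)$; since this is a ring map into a field it annihilates the nilradical of $A_j$, which in a local Artinian ring is precisely $\mfm_j$. Hence $s_i|_{\{x_j\}}$ is the reduction of $s_{i,j}$ modulo $\mfm_j$, and the nowhere-vanishing assumption gives $s_{i,j} \notin \mfm_j$, i.e. $s_{i,j} \in A_j^\times$.

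It remains to prove that every unit of $A_j$ has finite order; granting this, $s_{1,j}$ and $s_{2,j}$ have finite orders $d_1, d_2$, and $n_j = \operatorname{lcm}(d_1,d_2)$ gives $s_{1,j}^{n_j} = 1 = s_{2,j}^{n_j}$, completing the argument. For a unit $u \in A_j^\times$, its image $\bar u$ lies in $\Fp^\times$, which is a torsion group because every element of $\Fp$ is a root of unity; say $\bar u$ has order $m$. Then $u^m \in 1 + \mfm_j$. Because $\mfm_j$ is nilpotent, say $\mfm_j^N = 0$, and because we are in characteristic $p$, the Frobenius identity $(1 + x)^{p^e} = 1 + x^{p^e}$ shows that $(u^m)^{p^e} = 1$ as soon as $p^e \ge N$. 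Thus $u^{m p^e} = 1$ and $u$ is torsion.

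The essential input, and the only place where the hypothesis on the ground field enters, is the torsion of $A^\times$: it combines the fact that $\Fp^\times$ consists of roots of unity (which fails over any field containing an element of infinite multiplicative order, such as $\mbC$ or $\overline{\mbF_p(t)}$) with the freshman's-dream computation in characteristic $p$ that makes $1 + \mfm_j$ a $p$-group. This is the same phenomenon recorded in Proposition \ref{fact::torsionpic0}, and it is the reason the statement is special to $\Fp$; everything else is formal. I expect the only delicate point to be the clean identification of $s_i|_{\{x_j\}}$ with the residue of $s_{i,j}$ modulo $\mfm_j$, which is exactly where the nilpotence of the maximal ideal is used.
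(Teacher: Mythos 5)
Your proof is correct and follows essentially the same route as the paper's: both reduce to a local Artinian factor of $H^0(X,\mcO_X)$ with residue field $\Fp$, use that $\Fp^\times$ is torsion to handle the residue class, and kill the $1+\mfm$ part with a power of Frobenius via the identity $(1+x)^{p^e}=1+x^{p^e}$. The paper phrases the reduction as ``assume $X$ connected'' and computes $a_1^{p^r(p^e-1)}=a_2^{p^r(p^e-1)}$ directly rather than packaging the argument as torsion of the unit group, but this is only a cosmetic difference.
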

\begin{proof}
Without loss of generality we may assume that $X$ is connected. Set $A := H^0(X, \mcO _X)$. 
It is a finite-dimensional vector space over $\overline{\mbF} _p$, because $X$ is proper. 
Since $X$ is connected, $A$ has the unique maximal ideal $\mfm$, 
and it follows that 
$A/\mfm \cong  H^0(X^{\mathrm{red}}, \mcO _{X^{\mathrm{red}}}) \cong \overline{\mbF} _p$.

Let $a_i$ be the element of $A$ corresponding to $s_i$ and $\overline{a_i}$ the image of $a_i$ 
in $\overline{\mbF} _p$. 
Since $s_i$ is nowhere vanishing on $X$, the element $\overline{a_i} \in \overline{\mbF} _p$ is not zero. 
Hence, there exists $e \ge 1$ for which $\overline{a_1} ^{p^e -1} = \overline{a_2} ^{p^e -1} =1$. 
Take $r \ge 1$ such that $\mfm ^{p^r} = 0$. 
Then we have 
\[
a_1 ^{p^r(p^e -1)} - a_2^{p^r (p^e -1)} = \big( a_1 ^{p^e -1} - a_2 ^{p^e -1} \big)^{p^r} 
\in \mfm^{p^r} = 0.
\]
Therefore, it is sufficient to set $n = p^r(p^e -1)$. 
\end{proof}

\begin{lemma}\label{lemma:descent}
Let $X$ be a one-dimensional reduced scheme of finite type over $\Fp$, 
$L$ a line bundle on $X$, and $p \colon \overline{X} \rightarrow X$ the normalization of $X$. 
Let $\mcC \subset \overline{X}$ be the conductor of $X$, and 
$s \in H^0(\overline{X}, L|_{\overline{X}})$ be a section nowhere vanishing on $\mcC$. 
Then $s^n$ descends to $X$ for some $n \ge 1$.
\end{lemma}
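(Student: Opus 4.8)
The plan is to reduce the whole statement to a question on the conductor and then exploit the special arithmetic of $\Fp$ through Lemma~\ref{lemma:0dim}. First I would apply the conductor exact sequence of Remark~\ref{remark:conductor}, not to $L$ itself but to the line bundle $L^{\otimes n}$ and the section $s^n \in H^0(\overline{X}, L^{\otimes n}|_{\overline{X}})$. Since $(s^n)|_{\mcC} = (s|_{\mcC})^n$, that remark tells us that $s^n$ descends to $X$ if and only if $(s|_{\mcC})^n$ descends to $\mcD$ along the morphism $\mcC \to \mcD$ induced by $p$. Hence it suffices to find $n \ge 1$ for which some power of $s|_{\mcC}$ descends to $\mcD$.

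Next I would record the geometry of the conductor of a curve. As $X$ is a reduced one-dimensional scheme, its non-normal locus is finite, so both $\mcC$ and $\mcD$ are zero-dimensional closed subschemes of finite type over $\Fp$; in particular they are finite, hence affine and proper, schemes. On a finite scheme over a field every invertible sheaf is trivial, so $L|_{\mcD}$ is a free $\mcO_{\mcD}$-module of rank one; I fix a generator $e$. Because $\mcO_{\mcC} = \mcO_{\overline{X}} / \mcI \mcO_{\overline{X}} = \mcO_{\overline{X}} \otimes_{\mcO_X} \mcO_{\mcD}$, the scheme $\mcC$ is the preimage $\overline{X} \times_X \mcD$, and $L|_{\mcC}$ is the pullback of $L|_{\mcD}$ along $\mcC \to \mcD$; thus $e|_{\mcC}$ generates the free rank-one module $L|_{\mcC}$. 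The hypothesis that $s$ is nowhere vanishing on $\mcC$ says exactly that $s|_{\mcC}$ is also a generator, so I may write $s|_{\mcC} = f \cdot (e|_{\mcC})$ for a unique $f \in H^0(\mcC, \mcO_{\mcC})$, and the non-vanishing of $s|_{\mcC}$ forces $f$ to be a unit, i.e. nowhere vanishing on $\mcC$.

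The heart of the argument is then a single application of Lemma~\ref{lemma:0dim} to the proper scheme $\mcC$, with $s_1 = f$ and $s_2 = 1$: both are nowhere vanishing, so there is an integer $n \ge 1$ with $f^n = 1$ in $H^0(\mcC, \mcO_{\mcC})$. Consequently
\[
(s|_{\mcC})^n = f^n\,(e|_{\mcC})^n = (e|_{\mcC})^n = (e^n)|_{\mcC},
\]
and since $e^n \in H^0(\mcD, L^{\otimes n}|_{\mcD})$ is a genuine section on $\mcD$, the section $(s|_{\mcC})^n$ descends to $\mcD$. By the reduction of the first paragraph, $s^n$ descends to $X$, which is what we want.

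I expect the only genuinely essential step to be the input Lemma~\ref{lemma:0dim}, which is precisely where the assumption $k = \Fp$ is used: it guarantees that a nowhere-vanishing unit on the Artinian scheme $\mcC$ has finite multiplicative order, so that a power of $f$ becomes constant and therefore descends. Over any other field this fails, since a unit on $\mcC$ need have no root of unity among its powers, and no power of $s|_{\mcC}$ would lie in the image of $H^0(\mcD, L^{\otimes n}|_{\mcD})$. Everything else is the formal bookkeeping of the conductor sequence together with the triviality of line bundles on zero-dimensional schemes.
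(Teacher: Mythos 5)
Your proof is correct and follows essentially the same route as the paper's: reduce via the conductor sequence to descending $(s|_{\mcC})^n$ to $\mcD$, trivialize the line bundle on the zero-dimensional conductor, and invoke Lemma~\ref{lemma:0dim} to make a power of the resulting unit constant. The only cosmetic difference is that you compare $s|_{\mcC}$ to the pullback of a trivializing section via an explicit unit $f$ and apply Lemma~\ref{lemma:0dim} to the pair $(f,1)$, whereas the paper applies it directly to $s|_{\mcC}$ and $t|_{\mcC}$ after the same trivialization.
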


\begin{proof}
Let $\mcD \subset X$ be the conductor. 
Note that $\mcC$ and $\mcD$ are either empty or have dimension zero. 
By Remark \ref{remark:conductor}, it is sufficient to prove that $s^n|_{\mcC}$ descends to $\mcD$ for some $n \ge 1$. 
Let $t \in H^0 (\mcD , L|_{\mcD})$ be a section nowhere vanishing on $\mcD$. 
Then $t|_ \mcC$ is nowhere vanishing on $\mcC$. 
Any line bundle is trivial on a zero-dimensional scheme, and so by Lemma \ref{lemma:0dim}, 
we get $s^n |_{\mcC} = t^n |_{\mcC}$ for some $n \ge 1$. 
In particular, $s^n|_{\mcC}$ descends to $\mcD$.
\end{proof}

\begin{lemma}\label{lemma:fin_group}
Let $C$ be a smooth proper connected curve over $\Fp$. 
Then, a finitely generated subgroup of $\Aut (C)$ is finite. 
\end{lemma}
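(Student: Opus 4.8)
The plan is to exploit the fact that, over $\Fp = \varinjlim_n \mbF_{p^n}$, every object and morphism of finite type is already defined over some finite subfield, and that a scheme of finite type over a finite field has only finitely many rational points. The finite-generation hypothesis is essential and must be used precisely at this point: for curves of genus $0$ or $1$ the full group $\Aut(C)$ is infinite (it contains $\operatorname{PGL}_2(\Fp)$, respectively the $\Fp$-points of the associated elliptic curve), so finiteness can only hold after descending to a single finite field.

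First I would fix a finite generating set $g_1, \dots, g_m$ of the given subgroup $G \subseteq \Aut(C)$. Since $C$ is of finite type over $\Fp$, standard spreading-out arguments over the directed system $\{\mbF_{p^n}\}$ produce a smooth proper curve $C_0$ over some finite field $\mbF_q$ with $C_0 \times_{\mbF_q} \Fp \cong C$; moreover, after enlarging $q$, each $g_i$ together with its inverse descends to an $\mbF_q$-automorphism of $C_0$. The $\mbF_q$-rational automorphisms $\Aut_{\mbF_q}(C_0)$ form a subgroup of $\Aut_{\Fp}(C)$, because composites and inverses of automorphisms defined over $\mbF_q$ are again defined over $\mbF_q$. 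This subgroup contains every generator $g_i$, hence contains all of $G$, so it suffices to prove that $\Aut_{\mbF_q}(C_0)$ is finite.

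For the finiteness of $\Aut_{\mbF_q}(C_0)$ I would invoke representability of the automorphism functor: for a smooth projective curve the automorphism group scheme $\underline{\Aut}(C_0)$ is of finite type over $\mbF_q$ (finite when the genus is at least two, an extension of a finite group scheme by the Jacobian when the genus is one, and a form of $\operatorname{PGL}_2$ when the genus is zero). A scheme of finite type over a finite field has only finitely many rational points, so $\Aut_{\mbF_q}(C_0) = \underline{\Aut}(C_0)(\mbF_q)$ is a finite set, and therefore $G$ is finite.

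The main obstacle is the descent step: one must verify that the entire finitely generated group, and not merely its chosen generators, remains inside the automorphisms defined over one fixed finite field $\mbF_q$. This is exactly where finite generation is indispensable, since the ambient group $\Aut(C)$ over $\Fp$ is typically infinite. Once this reduction is secured, the conclusion rests only on the two classical inputs above, namely the finite-type representability of the automorphism scheme of a smooth projective curve and the finiteness of rational points of finite-type schemes over finite fields.
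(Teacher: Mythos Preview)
Your argument is correct but follows a genuinely different route from the paper. The paper proceeds by case analysis on the genus: for $g(C) \geq 2$ it uses that $\Aut(C)$ is already finite; for $C = \mbP^1$ it observes that finitely many elements of $\operatorname{PGL}(2,\Fp)$ lie in $\operatorname{PGL}(2,\mbF_{p^e})$ for some $e$; and for $C$ elliptic it uses the decomposition $\Aut(C) \cong T \rtimes F$ with $T$ the translation group, noting that every $\Fp$-point of an elliptic curve is torsion so finitely generated subgroups of $T$ are finite, and then concludes via a general remark on semidirect products. Your approach instead handles all genera at once by spreading out the curve and the generators to a common finite subfield $\mbF_q$ and invoking representability of the automorphism functor together with finiteness of $\mbF_q$-points on a finite-type scheme. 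Both arguments ultimately exploit descent to a finite field (the paper's genus-zero step is the same idea in explicit coordinates), but your version trades the elementary, case-by-case structure theory for a single conceptual input---that $\underline{\Aut}$ of a smooth projective curve is of finite type---which makes the proof uniform and immediately generalizable, at the cost of appealing to a heavier classical fact.
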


\begin{proof}
If $g(C) \ge 2$, then $\Aut (C)$ is finite and the statement is trivial. 
If $C = \mbP ^1$, then $\Aut (C) \cong \operatorname{PGL}(2, \Fp)$. 
A finitely generated subgroup $G$ of $\operatorname{PGL}(2, \Fp)$ is always finite, 
because $G$ is contained in $\operatorname{PGL}(2, \mbF _{p^e})$ for some $e \ge 1$. 
If $C$ is an elliptic curve, then we get $\Aut (C) \cong T \rtimes F$, 
where $T$ is the group of translations and $F$ is a finite group 
(see for instance \cite[Section X.5]{Silverman}). 
Note that each element of $T$ has finite order, because $C$ is defined over $\Fp$. 
Hence, a finitely generated subgroup of the abelian group $T$ is always finite, 
and so a finitely generated subgroup of $\Aut (C)$ is also finite. 

For completeness, we note a general fact in group theory: 
any finitely generated subgroup of $G_1 \rtimes G_2$ is finite, 
if we assume that for each $i$, any finitely generated subgroup of $G_i$ is finite. 
\end{proof}

\subsection{Keel's theorem}
In this subsection, we list some theorems from Keel \cite{Keel}. 

The following theorem is crucial in reducing problems from threefolds to surfaces. 
\begin{theorem}[Keel {\cite[Proposition 1.6]{Keel}}]
\label{theorem:basereduction}
Let X be a projective scheme over a field of positive characteristic. 
Let $L$ be a nef line bundle on $X$, and let $E$ be an effective Cartier divisor on $X$
such that $L-E$ is ample. 
Then $L$ is semiample if and only if $L|_{E_{\text{red}}}$ is semiample.
\end{theorem}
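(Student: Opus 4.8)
The forward implication is immediate: the restriction of a semiample line bundle to any closed subscheme is again semiample, since base point freeness of $|mL|$ passes to $|mL|_{E_{\mathrm{red}}}|$. So the plan is to prove the converse, that semiampleness of $L|_{E_{\mathrm{red}}}$ forces $L$ to be semiample. Write $A := L - E$, which is ample by hypothesis, and let $s_E \in H^0(X, \mathcal{O}_X(E))$ be the canonical section cutting out $E$. I would first dispose of the locus away from $E$: for a point $x \in X \setminus \Supp(E)$ and $n \gg 0$, the very ample line bundle $nA$ admits a section $\sigma$ with $\sigma(x) \ne 0$, and then $\sigma \cdot s_E^{n} \in H^0(X, nA + nE) = H^0(X, nL)$ is nonvanishing at $x$. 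As such $\sigma$ generate $nA$ everywhere and $s_E^{n}$ is a local generator of $\mathcal{O}_X(nE)$ at $x$, these products generate $nL$ at every point of $X \setminus \Supp(E)$. Thus the entire difficulty is concentrated along $E_{\mathrm{red}}$, and the rest of the argument rests on two ingredients: Frobenius, to upgrade the hypothesis to the infinitesimal thickenings of $E$, and Fujita's vanishing theorem, to transport sections from those thickenings to $X$.

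For the first ingredient, let $kE$ denote the $k$-th thickening, defined by the ideal $\mathcal{O}_X(-kE)$, and observe that every $kE$ has the same reduction $E_{\mathrm{red}}$. Here characteristic $p$ enters decisively: if $\mathcal{N}$ is the nilpotent ideal of $E_{\mathrm{red}}$ in $kE$, then $\mathcal{N}^{[p^e]} = 0$ for $e \gg 0$, so the $e$-th absolute Frobenius of $kE$ factors through $E_{\mathrm{red}}$. Pulling back a globally generated power of $L|_{E_{\mathrm{red}}}$ along this factorization shows that semiampleness of $L|_{E_{\mathrm{red}}}$ propagates to $L|_{kE}$ for \emph{every} $k$. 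This is the step that fails in characteristic zero, consistent with the theorem being false there.

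For the second ingredient, I would apply Fujita's vanishing theorem to the ample divisor $A$: there is an integer $m_0$ such that $H^1(X, mA + N) = 0$ for all $m \ge m_0$ and all nef line bundles $N$. Now fix $k = m_0$ and any $q \ge k$. Tensoring the exact sequence $0 \to \mathcal{O}_X(-kE) \to \mathcal{O}_X \to \mathcal{O}_{kE} \to 0$ by $qL$, surjectivity of the restriction $H^0(X, qL) \to H^0(kE, qL|_{kE})$ reduces to the vanishing of $H^1(X, qL - kE)$; and since $qL - kE = m_0 A + (q-k)L$ with $(q-k)L$ nef, Fujita's theorem gives exactly this vanishing. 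Choosing $q$ divisible enough that $qL|_{kE}$ is globally generated on $kE$ (possible by the previous paragraph), the lifted sections generate $qL|_{kE}$, and hence, by Nakayama's lemma applied over the local rings at points of $E_{\mathrm{red}} \subset kE$, they generate $qL$ at every point of $E_{\mathrm{red}}$.

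Combining the two generation statements, a common multiple $N$ of the exponents produced along $E_{\mathrm{red}}$ and away from $\Supp(E)$ makes $NL$ globally generated on all of $X$, so $L$ is semiample. The main obstacle is precisely the interaction of the two ingredients: Fujita's theorem forces me to lift from the $m_0$-th thickening rather than from $E$ itself, so I genuinely need semiampleness on $kE$ for $k = m_0$ and not merely on $E_{\mathrm{red}}$; it is the Frobenius factorization that supplies this, and it is here, and only here, that the positivity of the characteristic is used.
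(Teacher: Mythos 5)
Your proof is correct, and the paper offers no argument of its own here—it simply cites Keel's Proposition 1.6—while your route (forward direction by restriction; converse by handling $X \setminus \Supp(E)$ with sections of $nA$ times $s_E^n$, propagating semiampleness from $E_{\mathrm{red}}$ to every thickening $kE$ via the Frobenius factorization through the reduction, and lifting sections from $kE$ using Fujita vanishing for $A = L - E$ plus Nakayama) is essentially Keel's original argument. No gaps: you correctly identify why the thickening $kE$, rather than $E_{\mathrm{red}}$ itself, must be used (its ideal sheaf is the Cartier $\mcO_X(-kE)$, so the relevant $H^1$ is accessible to Fujita's theorem), and why positivity of the characteristic is indispensable at exactly the Frobenius step.
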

We note that Cascini, M\textsuperscript{c}Kernan, and Musta{\c{t}}{\u{a}} 
gave a different proof of Theorem \ref{theorem:basereduction} \cite[Theorem 3.2]{CMM}.

\begin{theorem}[Artin {\cite[Theorem 2.9]{Artin}}, Keel {\cite[Corollary 0.3]{Keel}}]\label{theorem:big}
Let $X$ be a projective surface over $\Fp$ and let $L$ be a nef and big line bundle on $X$. 
Then $L$ is semiample.
\end{theorem}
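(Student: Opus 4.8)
The plan is to reduce the statement first from the surface $X$ to a one-dimensional subscheme by means of Keel's base reduction (Theorem \ref{theorem:basereduction}), and then to settle the resulting curve case using the torsion of $\Pic^0$ over $\Fp$ (Proposition \ref{fact::torsionpic0}).

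First I would invoke Kodaira's lemma. Since $L$ is big, for a fixed ample line bundle $A$ there exist an integer $m \ge 1$ and an effective Cartier divisor $E$ with $mL \sim A + E$, so that $mL - E \sim A$ is ample. As $L$ is semiample if and only if $mL$ is, I may replace $L$ by $mL$ and assume from the start that $L - E$ is ample for some effective Cartier divisor $E$. Theorem \ref{theorem:basereduction} then reduces the problem to showing that $L|_{E_{\mathrm{red}}}$ is semiample, where $E_{\mathrm{red}}$ is a reduced projective curve over $\Fp$ and $L|_{E_{\mathrm{red}}}$ is nef.

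It therefore remains to prove that a nef line bundle $M$ on a reduced projective curve $C$ over $\Fp$ is semiample. Passing to the normalization $p \colon \overline{C} \to C$, I would treat each connected component of $\overline{C}$, which is a smooth projective curve over $\Fp$, separately: if $M$ has positive degree there it is ample, while if $M$ has degree zero it is numerically trivial and hence $\mbQ$-linearly trivial by Proposition \ref{fact::torsionpic0}; in either case $M|_{\overline{C}}$ is semiample, so $M^{\otimes k}$ is globally generated on $\overline{C}$ for some $k \ge 1$.

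The main obstacle is the final descent from $\overline{C}$ back to $C$: global generation of $M^{\otimes k}$ on $\overline{C}$ does not descend formally, since one must match the generating sections along the conductor. This is exactly where working over $\Fp$ is essential. Choosing sections of $M^{\otimes k}$ that are nowhere vanishing on the (zero-dimensional) conductor $\mcC \subset \overline{C}$, I would apply the Frobenius-type gluing of Lemma \ref{lemma:0dim} and Lemma \ref{lemma:descent}, whose proofs rely on the finiteness of the relevant groups of units over $\Fp$, to conclude that a further power of each such section descends to $C$. Assembling enough descending sections shows that a power of $M$ is base point free on $C$, and hence, combined with the reduction above, that $L$ is semiample.
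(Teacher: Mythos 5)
Your proposal is correct and follows essentially the same route as the paper: reduce to the one-dimensional case via Theorem \ref{theorem:basereduction} and then use the torsion of $\Pic^0$ over $\Fp$ (Proposition \ref{fact::torsionpic0}). The only difference is that the paper simply quotes the fact that nef line bundles on curves over $\Fp$ are semiample, whereas you spell out the normalization-and-conductor descent (via Lemmas \ref{lemma:0dim} and \ref{lemma:descent}) behind that fact; both versions are fine.
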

\begin{proof}
Since by Proposition \ref{fact::torsionpic0} nef line bundles on curves over $\Fp$ are semiample, 
the claim follows from Theorem \ref{theorem:basereduction}. 
\end{proof}

We say that a map $f \colon X \to Y$ is a \textit{finite universal homeomorphism} 
if it is a finite homeomorphism under any base change. 
In this case, we have a correspondence, up to taking powers, between 
the set of sections of a line bundle $L$ on $Y$ and the set of sections of $L|_X$. 
\begin{theorem}[Keel {\cite[Lemma 1.4]{Keel}}]\label{theorem:univhomeo}
Let $f \colon X \to Y$ be a finite universal homeomorphism 
between varieties defined over a field of characteristic $p>0$ and let $L$ be a line bundle on $Y$. 
Then the following hold. 
\begin{enumerate}
\item[(1)] For $s \in H^0(X, L|_X)$, the section $s^{p^e} \in H^0(X, L^{\otimes p^e}|_X)$ 
descends to $Y$ for a sufficiently large integer $e \ge 1$. 
\item[(2)] If $t \in H^0(Y, L)$ satisfies $t|_X = 0$, then $t^{p^e} = 0$ holds for a sufficiently large integer $e \ge 1$. 
\end{enumerate}
\end{theorem}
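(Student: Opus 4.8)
The plan is to work affine-locally on $Y$ and translate both statements into ring theory, where a finite universal homeomorphism in characteristic $p$ turns out to have a very rigid structure. Since $Y$ is quasi-compact and each claim concerns a single global section, I would cover $Y$ by finitely many affine opens $\Spec B$ over which $L$ is trivial, prove the two statements with an exponent $e$ that may depend on the chosen open set, and then take the maximum of these finitely many exponents. Fix such an open set and write $\Spec A = f^{-1}(\Spec B)$, so that $f$ corresponds to a finite ring homomorphism $\varphi \colon B \to A$. After trivializing $L$, a section of $L|_X$ over $\Spec A$ is just an element of $A$, a section of $L$ over $\Spec B$ is an element of $B$, and restriction along $f$ is $\varphi$. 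In this language, descending a section means exhibiting it in the image of $\varphi$, and the condition $t|_X = 0$ means $t \in \ker \varphi$.

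Statement (2) is elementary. For any prime $\mathfrak{p} \subset B$ choose, using surjectivity of $f$, a prime $\mathfrak{q} \subset A$ with $\varphi^{-1}(\mathfrak{q}) = \mathfrak{p}$; then $\ker \varphi \subseteq \varphi^{-1}(\mathfrak{q}) = \mathfrak{p}$. As $\mathfrak{p}$ ranges over all primes of $B$, this shows $\ker \varphi$ is contained in the nilradical, so any $t \in \ker \varphi$ satisfies $t^{p^e} = 0$ once $p^e$ exceeds its nilpotency index. Taking $e$ uniform over the finite affine cover finishes this part.

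The heart of the argument, and the step I expect to be the main obstacle, is the structural fact that for a finite universal homeomorphism in characteristic $p$ there is an exponent $e$ with $a^{p^e} \in \varphi(B)$ for every $a \in A$. The mechanism is that universal injectivity makes the diagonal $X \to X \times_Y X$ surjective, so the kernel of the multiplication map $A \otimes_B A \to A$ is contained in every prime and hence is nil; in particular $a \otimes 1 - 1 \otimes a$ is nilpotent, and raising to a large $p$-power gives $a^{p^e} \otimes 1 = 1 \otimes a^{p^e}$ in $A \otimes_B A$. Converting this equality into the assertion $a^{p^e} \in \varphi(B)$ is where the finiteness of $\varphi$ is essential: I would reduce, using that $A$ is a finite $B$-algebra and that Frobenius is additive, first to a single algebra generator $a$ and then to the monogenic integral extension $\varphi(B)[a]$, in which the coefficients of the expression for $a^{p^e}$ can be pinned down. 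This is exactly the structure theory of finite purely inseparable morphisms, and the non-flat case is the delicate point.

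Granting the structural fact, statement (1) is immediate: writing $s \leftrightarrow a$, we have $s^{p^e} \leftrightarrow a^{p^e} = \varphi(b)$ for some $b \in B$, and the section of $L^{\otimes p^e}$ over $\Spec B$ corresponding to $b$ pulls back along $f$ to $s^{p^e}$; thus $s^{p^e}$ descends to $Y$. Globalizing over the finite cover and taking the largest of the resulting exponents completes the proof of both parts.
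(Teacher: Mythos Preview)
The paper does not supply its own proof of this statement; it is quoted verbatim from Keel \cite[Lemma~1.4]{Keel} and used as a black box. So there is no in-paper argument to compare against, and your outline has to stand on its own.

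Your treatment of (2) is correct and complete: surjectivity of $\Spec A\to\Spec B$ forces $\ker\varphi$ into every prime of $B$, hence into the nilradical, and Noetherianity gives nilpotence.

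For (1), the framework is right, and so is the intermediate target: the content of the theorem is precisely that for a finite universal homeomorphism $\varphi\colon B\to A$ of $\mbF_p$-algebras one has $a^{p^e}\in\varphi(B)$ for all $a\in A$ and some uniform $e$. Your route via the nilpotence of the diagonal ideal of $A\otimes_B A$ is also correct and yields $a^{p^e}\otimes 1=1\otimes a^{p^e}$. The genuine gap is the step you yourself flag: converting this tensor identity into $a^{p^e}\in\varphi(B)$. ``Pinning down the coefficients'' in a monogenic extension is not an argument, and this is exactly where faithfully flat descent would apply but is unavailable. One way to close it: after replacing $B$ by its image and passing to the monogenic subextension $B\subseteq B[c]$ with $c=a^{p^e}$, rerun the nilpotent-diagonal argument in $B[c]\otimes_B B[c]$ to get $c'\otimes1=1\otimes c'$ for a further $p$-power $c'$; then $B[c']\otimes_B B[c']$ is generated over $B$ by the single element $c'\otimes1$, so the coprojection $B[c']\to B[c']\otimes_B B[c']$ is an isomorphism, whence $B\hookrightarrow B[c']$ is a finite ring epimorphism and therefore surjective by Nakayama. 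One still has to argue that this descent through subextensions terminates, which is not automatic. The cleaner and more standard route (and the one Keel uses) is to invoke directly the characterization of finite universal homeomorphisms in characteristic $p$: there exist $e\ge 1$ and a morphism $g\colon Y\to X$ with $f\circ g=F_Y^{e}$ and $g\circ f=F_X^{e}$; then $t\coloneq g^*s\in H^0(Y,L^{\otimes p^e})$ satisfies $f^*t=(F_X^{e})^*s=s^{p^e}$, and no local-to-global patching is needed.

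One smaller point on your affine-local strategy for (1): producing local descents $t_i$ on a finite cover and taking the maximal exponent does not by itself yield a global section, because the $t_i$ need not agree on overlaps. You have to feed (2) back in: on overlaps $f^*(t_i-v_{ij}^{p^e}t_j)=0$ (with $v_{ij}$ the transition functions of $L$), so a further $p$-power makes the $t_i$ glue.
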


In this paper, 
we frequently use the following theorems. 

\begin{theorem}[Keel {\cite[Corollary 2.12]{Keel}}]\label{theorem:gluing1}
Let $X = X_1 \cup X_2$ be a projective scheme over $\Fp$, where $X_i$ are closed subsets. 
Let $L$ be a nef line bundle on $X$ such that $L|_{X_i}$ are semiample. 
Let $g_i \colon X_i \to Z_i$ be the map associated to $L|_{X_i}$.  
Assume that all but finitely many fibers of $g_2|_{X_1 \cap X_2}$ are geometrically connected. 
Then $L$ is semiample.
\end{theorem}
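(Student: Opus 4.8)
The plan is to prove semiampleness by descending global sections through the Mayer--Vietoris sequence for $X = X_1 \cup X_2$, using the two contractions $g_1, g_2$ as scaffolding and the special behaviour of sections over $\Fp$ to carry out the gluing. Writing $W$ for the scheme-theoretic intersection $X_1 \cap X_2$, a pair $s_i \in H^0(X_i, L^m|_{X_i})$ glues to a section of $L^m$ on $X$ exactly when $s_1|_W = s_2|_W$, by the exact sequence
\[
0 \longrightarrow H^0(X, L^m) \longrightarrow H^0(X_1, L^m|_{X_1}) \oplus H^0(X_2, L^m|_{X_2}) \longrightarrow H^0(W, L^m|_W)
\]
analogous to Remark \ref{remark:conductor}. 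Since semiampleness only requires $L^m$ to be globally generated for a single $m$, it is enough to produce, at each closed point $x \in X$, such a glued section not vanishing at $x$.

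After replacing $L$ by a power I may assume each $L|_{X_i}$ is base point free, so $g_i \colon X_i \to Z_i$ satisfies $(g_i)_* \mcO_{X_i} = \mcO_{Z_i}$ and $L^m|_{X_i} = g_i^* A_i^{k}$ for an ample $A_i$ on $Z_i$ and suitable $k$; by the projection formula every section of $L^m|_{X_i}$ is then pulled back from $Z_i$. Restricting to $W$, both $g_1|_W$ and $g_2|_W$ are polarised by $L|_W$, hence contract every fibre of the map $\phi \colon W \to V$ associated to $L|_W$; as $\phi_* \mcO_W = \mcO_V$, each factors as $g_i|_W = \pi_i \circ \phi$ with $\pi_i \colon V \to g_i(W)$ finite. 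Thus the restriction to $W$ of the section attached to $\alpha \in H^0(g_i(W), A_i^k|_{g_i(W)})$ equals $\phi^* \pi_i^* \alpha$, and (since $\phi^*$ is injective, after adjusting powers so that $\pi_1^* A_1$ and $\pi_2^* A_2$ lie in a common system on $V$) matching sections across $W$ amounts to matching the pullbacks $\pi_1^* \alpha$ and $\pi_2^* \beta$ on $V$.

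The key step is to descend sections along $\pi_2$. By hypothesis all but finitely many fibres of $g_2|_W = \pi_2 \circ \phi$ are geometrically connected; as the fibres of $\phi$ are connected, this forces $\pi_2$ to be injective on points outside a finite set $B \subset g_2(W)$. Over $g_2(W) \setminus B$ the finite surjection $\pi_2$ is therefore bijective with trivial residue field extensions, hence a finite universal homeomorphism, so by Theorem \ref{theorem:univhomeo} a power $\sigma^{p^e}$ of any section $\sigma$ on $V$ descends there; the remaining descent across the zero-dimensional locus $B$, where $\pi_2$ genuinely identifies distinct points of $V$, is governed by Remark \ref{remark:conductor} and Lemma \ref{lemma:0dim}, which is exactly the place where $\Fp$ is indispensable. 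In this way, for any $\alpha \in H^0(g_1(W), A_1^k|_{g_1(W)})$ a suitable power satisfies $\pi_1^*(\alpha^{p^e}) = \pi_2^* \beta$ for some $\beta \in H^0(g_2(W), A_2^{k'}|_{g_2(W)})$.

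Granting this, global generation follows, and the whole argument descends sections only along the good map $\pi_2$, which explains why the hypothesis is imposed on $g_2$ alone. Fix a closed point $x$. If $x \in X_1$, choose $\alpha_1 \in H^0(Z_1, A_1^k)$ with $\alpha_1(g_1(x)) \neq 0$; descending $\pi_1^*(\alpha_1|_{g_1(W)})$ along $\pi_2$ as above produces $\beta$ for which $s_1 = g_1^*(\alpha_1^{p^e})$ and $s_2 = g_2^* \beta$ agree on $W$ and hence glue to a section of $L^{m'}$ on $X$ not vanishing at $x$. If $x \in X_2$ with $g_2(x) \in g_2(W)$, pick $w \in W$ with $g_2(w) = g_2(x)$, choose $\alpha \in H^0(Z_1, A_1^k)$ with $\alpha(g_1(w)) \neq 0$, and run the same construction: since $\pi_2^*\beta = \pi_1^*(\alpha^{p^e})$ is nonzero at $\phi(w)$, the glued section is nonzero at $x$; when $g_2(x) \notin g_2(W)$ one separates $g_2(x)$ from $g_2(W)$ by a section of $A_2^k$ and glues it with the zero section on $X_1$. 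The main obstacle is the key step above: converting the purely topological hypothesis on the fibres of $g_2|_W$ into an honest equality of section spaces, which rests on the two features of $\Fp$ with no analogue over larger fields, namely the correspondence of sections across a finite universal homeomorphism only up to a Frobenius power and the zero-dimensional descent at $B$ supplied by Lemma \ref{lemma:0dim}.
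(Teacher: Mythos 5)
The paper does not actually prove this statement --- it is quoted from Keel \cite[Corollary 2.12]{Keel} --- so your attempt can only be measured against Keel's own argument, whose architecture you do reproduce: reduce gluing to matching pulled-back sections along $W=X_1\cap X_2$, factor both restrictions $g_i|_W$ through the contraction $\phi\colon W\to V$ associated to $L|_W$, observe that the connectedness hypothesis forces $\pi_2\colon V\to g_2(W)$ to be injective away from a finite set $B$, and use torsion phenomena over $\Fp$ to descend sections up to powers. That skeleton is correct, and your derivation that the fibres of $g_2|_W$ over $z$ are the disjoint unions $\bigsqcup_{v\in\pi_2^{-1}(z)}\phi^{-1}(v)$ of nonempty connected sets, so that connectedness of all but finitely many of them pins down $B$, is sound.

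The genuine gap is your ``key step''. You claim that a power of $\pi_1^*\alpha$ descends along $\pi_2$ by applying Theorem \ref{theorem:univhomeo} over $g_2(W)\setminus B$ and Lemma \ref{lemma:0dim} ``at $B$'', but these two local descents do not patch into a global section: Theorem \ref{theorem:univhomeo} is a statement about a finite universal homeomorphism between two fixed schemes, not about an open locus of one, and Remark \ref{remark:conductor} together with Lemma \ref{lemma:0dim} governs the conductor of a \emph{normalization}, whereas the conductor of the finite, generically non-birational extension $\mcO_{g_2(W)}\subset(\pi_2)_*\mcO_V$ can be positive-dimensional (it contains the entire inseparability locus of $\pi_2$), so the zero-dimensional lemma does not apply to it. What is actually needed --- and what is the technical heart of Keel's Section 2 --- is the construction of an intermediate scheme $V\to V'\to g_2(W)$ in which $V\to V'$ is the projective pushout identifying the finite fibres over $B$ and $V'\to g_2(W)$ is a finite universal homeomorphism, together with the verification that this pushout exists and carries the relevant ample class; only then can the two descent mechanisms be applied in sequence. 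Two further omissions: for the descent at $B$ to be possible even in principle, $\pi_1^*\alpha$ must be nowhere vanishing on the finite set $\pi_2^{-1}(B)$ (unit values in $\Fp$ can be equalized by raising to a power $p^r(p^e-1)$, but a zero value can never be matched with a nonzero one), and you never impose this on $\alpha$ --- it is arrangeable because $A_1$ is ample, but it must be arranged; and the section $\beta$ you produce lives only on $g_2(W)$, so it must still be extended to $Z_2$ (Serre vanishing for the ample $A_2$) before $g_2^*\beta$ makes sense as a section of a power of $L|_{X_2}$.
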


\begin{theorem}[Keel {\cite[Corollary 2.14]{Keel}}]
\label{theorem:gluing2}
Let $X$ be a reduced projective scheme over $\Fp$. 
Let $p \colon \overline{X} \rightarrow X$ be the normalization of $X$. 
Let $D \subset X$ and $C \subset \overline{X}$ be the reductions of the conductors. 
Let $L$ be a nef line bundle on $X$ such that $L|_{\overline{X}}$ and $L|_{D}$ are semiample. 
Let $g \colon \overline{X} \to Z$ be the map associated to $L|_{\overline{X}}$.  
Assume that all but finitely many fibers of $g|_{C}$ are geometrically connected. Then $L$ is semiample.
\end{theorem}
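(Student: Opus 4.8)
The plan is to prove semiampleness of $L$ by descending sections from the normalization $\overline{X}$ to $X$, using the conductor exact sequence of Remark \ref{remark:conductor}. After replacing $L$ by a sufficiently divisible power, I would first exploit the hypotheses that $L|_{\overline{X}}$ and $L|_D$ are semiample to arrange that both are globally generated: this produces the contraction $g \colon \overline{X} \to Z$ with $L|_{\overline{X}} = g^* A$ for an ample $A$ on $Z$ and $g_* \mcO_{\overline{X}} = \mcO_Z$, and analogously a morphism $h \colon D \to W$ with $L|_D = h^* B$ for an ample $B$. Since $A$ is ample and $g$ has connected fibres, every section in $H^0(\overline{X}, L^{\otimes m}|_{\overline{X}})$ is, for $m$ large, pulled back from a section of $A^{\otimes m}$ on $Z$, and these generate $L^{\otimes m}|_{\overline{X}}$. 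By Remark \ref{remark:conductor}, such a section $s$ descends to $X$ precisely when $s|_{C}$ descends to $D$, that is, when $s|_{C}$ lies in the image of $H^0(D, L^{\otimes m}|_D) \to H^0(C, L^{\otimes m}|_C)$.

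The geometric heart of the argument is to compare, over the conductor, the two structures inducing $L|_C$: the morphism $g|_C \colon C \to Z$ and the morphism $h \circ (p|_C) \colon C \to W$. Since $L|_C$ is semiample, it has a contraction $c \colon C \to Y_C$ that is unique up to a finite universal homeomorphism and whose fibres are geometrically connected. The hypothesis that all but finitely many fibres of $g|_C$ are geometrically connected is precisely what forces $g|_C$ to realise $c$ away from finitely many bad fibres; and since $h \circ (p|_C)$ pulls back the ample bundle $B$ to a power of $L|_C$, it must be constant on the connected fibres of $c$ and hence also factors through $c$. Applying Theorem \ref{theorem:univhomeo} to the resulting finite universal homeomorphism of contraction targets, I would produce, for each relevant section $a$ on $Z$, an exponent $e$ and a section $b$ on $W$ with $(s^{p^e})|_C = (h^* b)|_C$, so that $s^{p^e}$ descends to $X$. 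Replacing $L^{\otimes m}$ by $L^{\otimes m p^e}$ and running this over a finite generating family, the descended sections $\tilde s \in H^0(X, L^{\otimes m p^e})$ satisfy $\tilde s|_{\overline{X}} = s^{p^e}$; since $p$ is surjective and the $s^{p^e}$ generate $L^{\otimes m p^e}|_{\overline{X}}$, the $\tilde s$ generate $L^{\otimes m p^e}$ on $X$, yielding semiampleness.

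The main obstacle is the handling of the finitely many fibres of $g|_C$ that fail to be geometrically connected, together with the universal-homeomorphism discrepancy between the two contractions; this is exactly where working over $\Fp$ becomes indispensable. On this exceptional locus the comparison reduces to a descent problem for sections over a zero-dimensional (or finite) scheme, which I would resolve using Lemma \ref{lemma:descent} and Lemma \ref{lemma:0dim}: over $\Fp$ a nowhere-vanishing section becomes, after raising to a suitable power, equal to any other, so the restrictions coming from $\overline{X}$ and from $D$ can be forced to agree after passing to a further $p$-power, with Proposition \ref{fact::torsionpic0} ensuring that the numerically trivial ambiguities over the bad fibres are torsion and hence killed. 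Assembling the generic comparison with this finite correction — and extracting a single uniform exponent that works simultaneously for the chosen finite generating set — completes the proof; alternatively, one could try to deduce the statement directly from the two-component gluing result Theorem \ref{theorem:gluing1} by exhibiting $X$ as the pushout $\overline{X} \sqcup_{C} D$ and checking that the connectivity hypothesis transfers.
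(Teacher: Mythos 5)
The paper does not prove this statement: Theorem \ref{theorem:gluing2} is quoted from Keel \cite[Corollary 2.14]{Keel} and used as a black box, so there is no in-paper proof to compare against. Your sketch is recognizably Keel's overall strategy (descent of sections across the conductor, comparison of the contractions of $L|_{\overline{X}}$ and $L|_D$ over the conductor, $\Fp$-specific torsion tricks on the exceptional locus), and the alternative you mention at the end --- realizing $X$ as the pushout of $\overline{X}$ and $\mcD$ along $\mcC$ --- is exactly how Keel organizes his Corollaries 2.12 and 2.14 as consequences of his gluing theorem.

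There is, however, a genuine gap at the central step. It is not true that for each section $a$ of $A^{\otimes m}$ on $Z$ some $p$-power of $s = g^*a$ descends to $X$. Already for $X$ a nodal curve with normalization $\overline{X} = \mathbb{P}^1$, $C = \{0,\infty\}$, $D$ the node, and $L$ ample (so $g$ is an isomorphism and every fiber of $g|_C$ is a reduced point), a section with $s(0) = 0 \neq s(\infty)$ never descends, nor does any power of it, since descent forces the values at the two conductor points to agree. The mechanism you propose for producing the matching section $b$ on $W$ --- transporting $a$ through ``the finite universal homeomorphism of contraction targets'' --- fails at the same place: the induced map from the contraction target $Y_C$ of $L|_C$ to $W$ is finite but in general \emph{not} injective, because the conductor identification $C \to D$ can glue distinct fibers of $g|_C$ to a single point of $D$ and hence of $W$; so $a|_{Y_C}$ need not descend along $Y_C \to W$. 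The theorem's hypothesis controls only the fibers of $g|_C$, i.e.\ the map $Y_C \to Z$, and says nothing about $Y_C \to W$. What must actually be proved is that the subspace of \emph{compatible} pairs $(a,b) \in H^0(Z, A^{\otimes m}) \oplus H^0(W, B^{\otimes m})$ --- those agreeing on the conductor after the reduced/non-reduced and $p$-power corrections --- still generates $L^{\otimes N}$ at every point of $X$; equivalently, that the pushout of $Z$ and $W$ along $Y_C$ exists as a projective scheme carrying an ample bundle restricting to $A$ and $B$. That is the real content of Keel's result, and it is where the $\Fp$ hypothesis enters essentially (torsion of $\Pic^0$ and roots of unity in finite rings, to kill the obstruction to gluing $A$ and $B$ after passing to powers). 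Your sketch names the right ingredients (Lemma \ref{lemma:0dim}, Lemma \ref{lemma:descent}, Theorem \ref{theorem:univhomeo}, Proposition \ref{fact::torsionpic0}) but does not carry out this gluing, and the pointwise descent claim it rests on is false as stated. A smaller issue: Remark \ref{remark:conductor} concerns the full conductor subschemes $\mcC, \mcD$, whereas the theorem's data lives on their reductions $C, D$; the passage between the two requires the universal-homeomorphism bookkeeping of Theorem \ref{theorem:univhomeo}, which the paper carries out explicitly inside the proof of Theorem \ref{theorem:2dim} and which your proposal leaves implicit.
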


\section{Base point free theorem for normal surfaces}\label{section:surface}
In this section, we prove Theorem \ref{theorem:genshokurov}. 
The key tool is the following theorem of Tanaka. 
We say that a $\mbQ$-divisor $B$ on a variety $X$ is \textit{$\mbQ$-effective} if 
$h^0(X,mB) \neq 0$ for some $m \geq 1$.
Note that a normal surface over $\Fp$ is always $\mbQ$-factorial (see \cite[Theorem 11.1]{Tanaka}). 

\begin{theorem}[Tanaka {\cite[Theorem 12.6]{Tanaka}}]\label{theorem:tanaka_bpf}
Let $X$ be a projective normal surface over $\Fp$ and let D be a nef divisor. 
If $qD - K_X$ is $\mbQ$-effective 
for some positive rational number $q \in \mathbb{Q}$, then D is semiample.
\end{theorem}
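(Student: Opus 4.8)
The plan is to deduce the statement from Tanaka's Theorem~\ref{theorem:tanaka_bpf}, applied to the nef divisor $D=L$. By that result it suffices to produce a positive rational number $q$ for which $qL-K_X$ is $\mbQ$-effective. Set $N:=L-(K_X+\Delta)$, so that $N$ is nef by hypothesis and $K_X=L-N-\Delta$. For every $q\ge 1$ we then have the decomposition $qL-K_X=(q-1)L+N+\Delta$, a sum of a nef divisor and the effective divisor $\Delta$; the entire difficulty is to promote this ``nef plus effective'' class to a genuinely $\mbQ$-effective one. I would organize the argument by the numerical type of $L$.

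If $L$ is big then, being nef and big, it is already semiample by Theorem~\ref{theorem:big}, and there is nothing to prove. If $L\equiv 0$ then $L$ is numerically trivial, hence torsion by Proposition~\ref{fact::torsionpic0}, so a positive multiple of $L$ is trivial and $L$ is semiample. I may therefore assume that $L^2=0$ and $L\not\equiv 0$, so that $L$ has numerical dimension one; in particular $L\cdot H>0$ for every ample $H$, by the Hodge index theorem. Fixing such an $H$ and writing $D=qL-K_X$, we get $D\cdot H>0$ for $q$ large, whence $h^2(X,mD)=h^0(X,K_X-mD)=0$ for $m\gg 0$, because $(K_X-mD)\cdot H\to-\infty$. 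Asymptotic Riemann--Roch then yields $h^0(X,mD)\ge\chi(X,mD)$ for $m\gg 0$. Since $L^2=0$, one computes $D^2=-2q\,L\cdot K_X+K_X^2$ and $L\cdot K_X=-L\cdot(N+\Delta)\le 0$. Hence, if $L\cdot K_X<0$, then $D^2\to+\infty$, so $D$ is big and in particular $\mbQ$-effective for suitable $q$, and Tanaka's theorem concludes this case.

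The remaining case, $L\cdot K_X=0$ --- equivalently $L\cdot N=0$ and $L\cdot\Delta=0$, each summand being nonnegative --- is where I expect the real work. Here I would again use the Hodge index theorem together with the special arithmetic of $\Fp$. Since $L^2=0$, $L\not\equiv0$ and $N\cdot L=0$ with $N$ nef, the bounds $N^2\le 0$ (Hodge index) and $N^2\ge 0$ (nefness) force $N^2=0$ and then $N\equiv\mu L$ for some rational $\mu\ge 0$; as $N-\mu L$ is numerically trivial, Proposition~\ref{fact::torsionpic0} upgrades this to $N\Lq\mu L$. Consequently $qL-K_X\Lq(q-1+\mu)L+\Delta$. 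When $\mu=0$ this is $\Lq\Delta$ for $q=1$, which is effective, and Tanaka's theorem applies.

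The genuine obstacle is the subcase $\mu>0$. Now the class $(q-1+\mu)L+\Delta$ has nonpositive self-intersection, so Riemann--Roch degenerates and only gives $h^0(X,mL)\ge\chi(X,\mcO_X)$; this settles the surfaces with $\chi(X,\mcO_X)>0$ but not those with $\chi(X,\mcO_X)\le 0$, such as surfaces fibred in curves over a base of positive genus. To finish, one must show that such an $L$ --- which satisfies $K_X+\Delta\Lq(1-\mu)L$, hence is tightly constrained by $K_X$ --- is $\mbQ$-effective. I would approach this by passing to a resolution and invoking abundance for surfaces, so as to realize a positive multiple of $L$ as an honest effective fibre class of the fibration underlying a numerical-dimension-one nef divisor over $\Fp$; once $L$ is $\mbQ$-effective, so is $qL-K_X$, and Theorem~\ref{theorem:tanaka_bpf} delivers semiampleness. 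This last step, establishing $\mbQ$-effectivity of a numerical-dimension-one nef divisor meeting $K_X$ trivially, is the crux of the proof.
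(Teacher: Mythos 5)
There is a structural, fatal problem: your proposal is circular. The statement you were asked to prove \emph{is} Tanaka's Theorem \ref{theorem:tanaka_bpf}, and your very first sentence deduces it \emph{from} Theorem \ref{theorem:tanaka_bpf} applied to $D=L$. You have also silently swapped the hypotheses: you introduce a pair with $N:=L-(K_X+\Delta)$ nef ``by hypothesis,'' but that is the hypothesis of Theorem \ref{theorem:genshokurov}, not of Theorem \ref{theorem:tanaka_bpf}, whose only assumptions are that $D$ is nef and that $qD-K_X$ is $\mbQ$-effective for some $q>0$. What your sketch actually reconstructs --- the initial reductions via Theorem \ref{theorem:big} and Proposition \ref{fact::torsionpic0}, the Riemann--Roch computation with $h^2=0$, the Hodge-index argument forcing $N^2=0$ and $N\equiv\mu L$ --- is the skeleton of the paper's proof of Theorem \ref{theorem:genshokurov}, which may legitimately quote Theorem \ref{theorem:tanaka_bpf} precisely because the latter is an external input: the paper cites it from Tanaka's work and never proves it. A genuine blind proof of the statement would have to reproduce Tanaka's argument (the minimal model program and abundance for log surfaces in positive characteristic, specialized to $\Fp$), none of which appears here.

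Even read charitably as a proof of Theorem \ref{theorem:genshokurov}, your argument has a second circularity exactly where you flag the ``crux'': the subcase $\mu>0$, $\chi(\mcO_X)\le 0$ is handled by ``invoking abundance for surfaces,'' but abundance-type statements for surfaces over $\Fp$ are the content of the theorem under discussion. The paper instead disposes of these residual cases by hand: Claim \ref{claim:genshokurovnonminimal} reduces to a minimal model, and the classification of minimal surfaces is run case by case --- $\kappa=2$ is excluded by intersection numbers, $\kappa=1$ uses Lemma \ref{lemma:Lehmann} to write $D\equiv bF$, $\kappa=0$ uses Proposition \ref{proposition:abelian} (nef divisors on abelian surfaces are numerically semiample) together with Proposition \ref{proposition:cover} for (quasi-)hyperelliptic quotients, and $\kappa=-\infty$ uses the Zariski lemma to force $\Delta=0$, $g(B)=1$ and then Proposition \ref{proposition:ruled}, whose proof rests on Atiyah's classification of bundles on elliptic curves, to get $\operatorname{NEF}(X)\subset\operatorname{NE}(X)$. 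Your proposal supplies none of this, so the step you yourself identify as the real difficulty is left entirely unproved.
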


We will use the following proposition
to reduce the case of hyperelliptic surfaces to abelian surfaces. 
 
\begin{proposition}
\label{proposition:cover}
Let $p \colon Y \rightarrow X$ be a proper surjection between varieties defined over an algebraically closed field and 
let $L$ be a line bundle on $X$. 
Assume that $X$ is normal. Then $L$ is semiample if and only if $p^*(L)$ is semiample.
\end{proposition}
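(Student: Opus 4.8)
The forward implication is immediate: if $L$ is semiample then $|mL|$ is base point free for some $m\ge 1$, and since global generation is preserved under pullback, the sections in the image of $H^0(X,mL)\to H^0(Y,p^*(mL))$ already generate $p^*(mL)$, so $p^*L$ is semiample. The plan is therefore to prove the converse, assuming $M\coloneq p^*L$ is semiample. I would first reduce to two special shapes of $p$ by taking the Stein factorization $p\colon Y \xrightarrow{q} W \xrightarrow{\pi} X$, where $q_*\mcO_Y=\mcO_W$ (so $q$ is proper with connected fibres) and $\pi$ is finite surjective. Since $M=q^*(\pi^*L)$, it suffices to show that $q^*N$ semiample implies $N$ semiample for $N=\pi^*L$, and then that $\pi^*L$ semiample implies $L$ semiample.

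For the first reduction I would use a rigidity argument. Let $g\colon Y\to Z$ be the morphism associated to the semiample bundle $q^*N$, so that $g_*\mcO_Y=\mcO_Z$ and $q^*(mN)\cong g^*A$ for some $m\ge 1$ and some ample line bundle $A$ on $Z$. For each fibre $F$ of $q$ the bundle $(q^*N)|_F$ is trivial, hence $(g^*A)|_F$ is trivial; since $A$ is ample this forces $g(F)$ to be a point. As $g$ contracts every fibre of $q$ and $q_*\mcO_Y=\mcO_W$, the rigidity lemma yields a factorization $g=\bar g\circ q$ with $\bar g\colon W\to Z$. Then $q^*(mN)\cong g^*A\cong q^*(\bar g^*A)$, and because $q_*\mcO_Y=\mcO_W$ the pullback $q^*\colon \Pic W\to \Pic Y$ is injective, so $mN\cong \bar g^*A$. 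A pullback of an ample bundle is semiample, hence $N$ is semiample.

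The finite case is the main obstacle, and it is exactly here that normality of $X$ is used. Let $d=\deg\pi$. Normality of $X$ allows one to form the norm: a section $t\in H^0(W,\pi^*L^{\otimes k})$ has a norm $\mathrm{Nm}_{W/X}(t)\in H^0(X,L^{\otimes kd})$ (defined by the field-theoretic norm over the locus where $\pi$ is flat and extended over the codimension $\ge 2$ locus by normality), which is multiplicative and satisfies $\mathrm{Nm}_{W/X}(t)(x)\neq 0$ if and only if $t$ is nowhere vanishing on the finite fibre $\pi^{-1}(x)$. Now fix $m$ with $|m\pi^*L|$ base point free, and fix a closed point $x\in X$ with $\pi^{-1}(x)=\{w_1,\dots,w_s\}$, $s\le d$. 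For each $j$ choose a section of $m\pi^*L$ not vanishing at $w_j$; their product $\tau$ is a section of $sm\,\pi^*L=\pi^*(smL)$ nowhere vanishing on $\pi^{-1}(x)$, so $\mathrm{Nm}_{W/X}(\tau)\in H^0(X,smd\,L)$ does not vanish at $x$. Raising to the power $d!/s$ produces a section of $H^0(X,\,d!\,dm\,L)$ nonvanishing at $x$; as $x$ was arbitrary, the fixed linear system $|d!\,dm\,L|$ is base point free and $L$ is semiample. In positive characteristic the inseparable part of $\pi$ can be treated directly by Theorem \ref{theorem:univhomeo}, after which $\pi$ may be assumed generically separable so that the norm behaves as stated, completing the proof.
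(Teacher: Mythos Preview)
The paper's own proof is just a reference to \cite[Lemma~2.10]{Keel}, and your Stein-factorization-plus-norm outline is precisely the standard argument behind that citation, so in spirit you and the paper agree. There is, however, a real slip in the finite step. You claim that if $t_j\in H^0(W,m\pi^*L)$ is chosen with $t_j(w_j)\neq 0$, then the product $\tau=t_1\cdots t_s$ is nowhere vanishing on $\pi^{-1}(x)=\{w_1,\dots,w_s\}$. This is false: nothing prevents $t_j$ from vanishing at $w_i$ for $i\neq j$, so $\tau$ may well vanish at every $w_i$. The repair is immediate and actually simplifies things: since $|m\pi^*L|$ is base point free and the ground field is infinite, a single general section $t\in H^0(W,m\pi^*L)$ already avoids the finite set $\pi^{-1}(x)$, and then $\mathrm{Nm}_{W/X}(t)\in H^0(X,L^{\otimes md})$ is nonzero at $x$. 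The exponent $md$ is already independent of $x$, so the $d!/s$ bookkeeping becomes unnecessary.

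Two minor remarks. For the norm to have the stated support property at \emph{every} closed point (not just over the flat locus) it is cleanest to argue via cycles: $\mathrm{Nm}_{W/X}(t)$ cuts out the effective divisor $\pi_*(\operatorname{div} t)\sim mdL$ on the normal variety $X$, and since $\pi$ is finite its support is exactly $\pi(\Supp(\operatorname{div} t))$. Finally, the field norm works for arbitrary finite extensions, so your closing reduction to the generically separable case via Theorem~\ref{theorem:univhomeo} is not needed.
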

\begin{proof}
See for instance the proof of \cite[Lemma 2.10]{Keel}.
\end{proof}

\begin{proof}[Proof of Theorem \ref{theorem:genshokurov}]
Recall that we have the nef line bundle $L$ and the $\mbQ$-divisor $D \coloneq L - (K_X + \Delta)$ on 
the normal surface $X$ over $\Fp$.
\begin{claim} 
We can assume that $X$ is smooth. 
\end{claim}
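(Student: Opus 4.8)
The plan is to reduce to the smooth case by passing to a resolution of singularities and checking that the hypotheses of Theorem~\ref{theorem:genshokurov} are preserved, after which Proposition~\ref{proposition:cover} lets us transport semiampleness back down. Let $f \colon \widetilde{X} \to X$ be a resolution of singularities of the normal surface $X$; such a resolution exists for surfaces over any field. Set $\widetilde{L} \coloneq f^* L$, which is again nef since nefness is preserved under pullback. The idea is to produce a boundary $\widetilde{\Delta}$ on $\widetilde{X}$ for which $\widetilde{L} - (K_{\widetilde{X}} + \widetilde{\Delta})$ is nef, so that the smooth case of the theorem applies to $(\widetilde{X}, \widetilde{\Delta})$ and yields semiampleness of $\widetilde{L} = f^* L$; then Proposition~\ref{proposition:cover}, applied to the proper surjection $f$ and the normal variety $X$, gives that $L$ itself is semiample.

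The key step is choosing $\widetilde{\Delta}$ correctly. First I would write $K_{\widetilde{X}} + \widetilde{\Delta} = f^*(K_X + \Delta)$, where $\widetilde{\Delta}$ is the strict transform of $\Delta$ plus the discrepancy divisor; that is, $\widetilde{\Delta}$ is defined by the relation $K_{\widetilde{X}} + \widetilde{\Delta} \Lq f^*(K_X+\Delta)$. The immediate worry is that $\widetilde{\Delta}$ need not be effective, since exceptional divisors may appear with negative coefficients when $(X,\Delta)$ has worse-than-log-terminal behaviour. To circumvent this, rather than insisting on the numerical identity I would take $\widetilde{\Delta}$ to be an effective divisor that still satisfies the $\mbQ$-effectivity hypothesis needed downstream. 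Concretely, note that
\[
\widetilde{L} - (K_{\widetilde{X}} + \widetilde{\Delta}) \Lq f^*\bigl(L - (K_X+\Delta)\bigr) = f^* D,
\]
and $f^* D$ is nef because $D$ is nef. So with $\widetilde{\Delta}$ the (possibly non-effective) divisor from the discrepancy formula, $\widetilde{L} - (K_{\widetilde{X}}+\widetilde{\Delta})$ is nef.

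The main obstacle is therefore reconciling the effectivity requirement on the boundary with the discrepancy computation. Here I expect the cleanest route is to appeal to Tanaka's theorem (Theorem~\ref{theorem:tanaka_bpf}) directly on $\widetilde{X}$ rather than re-invoking the surface theorem, since Theorem~\ref{theorem:tanaka_bpf} only asks that $q\widetilde{L} - K_{\widetilde{X}}$ be $\mbQ$-effective for some $q > 0$, a condition insensitive to the effectivity of the boundary. Indeed,
\[
\widetilde{L} - K_{\widetilde{X}} \Lq f^* D + \widetilde{\Delta}^{\mathrm{eff}},
\]
and since $f^* D$ is nef and the positive part of $\widetilde{\Delta}$ is effective, this difference is $\mbQ$-effective. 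Thus Theorem~\ref{theorem:tanaka_bpf} applies to the nef divisor $\widetilde{L}$ on the smooth surface $\widetilde{X}$ and shows $\widetilde{L} = f^* L$ is semiample, whence $L$ is semiample by Proposition~\ref{proposition:cover}. The delicate point to verify carefully is that the $\mbQ$-effectivity truly survives the subtraction of the negative-coefficient exceptional part; I would handle this by moving any negative exceptional contribution to the other side and using that $f^* D$ already supplies nefness, so that the remaining divisor class is represented by an effective divisor after clearing denominators.
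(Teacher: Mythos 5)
Your overall frame (resolve, pull back, apply Proposition~\ref{proposition:cover}) matches the paper's, and you correctly identify the obstacle: the divisor $\widetilde{\Delta}$ defined by $K_{\widetilde{X}}+\widetilde{\Delta}=f^*(K_X+\Delta)$ need not be effective on an arbitrary resolution. But your workaround contains a genuine gap. You assert that $\widetilde{L}-K_{\widetilde{X}} \Lq f^*D+\widetilde{\Delta}$ is $\mbQ$-effective ``since $f^*D$ is nef and the positive part of $\widetilde{\Delta}$ is effective.'' A nef divisor plus an effective divisor is not in general $\mbQ$-effective: nefness does not imply $\mbQ$-effectivity. If that implication held, you could apply it on $X$ itself --- $L-K_X=D+\Delta$ with $D$ nef and $\Delta\ge 0$ would be $\mbQ$-effective, Theorem~\ref{theorem:tanaka_bpf} would apply at once, and the entire case analysis by Kodaira dimension occupying the rest of the section would be unnecessary. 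The whole difficulty of Theorem~\ref{theorem:genshokurov} is precisely that the nef divisor $D$ may fail to be $\mbQ$-effective, so this step cannot be waved through. The closing remark about ``moving the negative exceptional contribution to the other side and using that $f^*D$ supplies nefness'' likewise does not produce an effective representative of the class.

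The repair is simpler than what you attempt, and it is what the paper does: take $f$ to be the \emph{minimal} resolution. Then $K_{\widetilde{X}}$ is $f$-nef, so by the negativity lemma (cf.\ \cite[Corollary 4.3]{KollarMori}) the exceptional part of $K_{\widetilde{X}}-f^*K_X$ has nonpositive coefficients, and hence $\widetilde{\Delta}$ defined by $K_{\widetilde{X}}+\widetilde{\Delta}=f^*(K_X+\Delta)$ \emph{is} effective. With this choice the hypotheses of Theorem~\ref{theorem:genshokurov} are literally preserved ($f^*L$ and $f^*L-(K_{\widetilde{X}}+\widetilde{\Delta})=f^*D$ are nef and $\widetilde{\Delta}\ge 0$), and Proposition~\ref{proposition:cover} transports semiampleness back down. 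Note the claim is only meant to reduce to the smooth case while preserving the hypotheses, not to prove semiampleness outright.
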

\begin{proof}
	Let $f \colon Y \rightarrow X$ be the minimal resolution of singularities of $X$. 
	Define $\Delta_Y$ so that $K_Y + \Delta_Y = f^*(K_X + \Delta)$. 
	The divisor $\Delta_Y$ is an effective $\mbQ$-divisor by the negativity lemma 
	(cf.~ \cite[Corollary 4.3]{KollarMori}).
	Note that $f^*L$ and $f^*D = f^*L - (K_Y+\Delta_Y)$ are nef. 
	By Proposition \ref{proposition:cover} we know that $L$ is semiample if and only if $f^*L$ is semiample. 
	Thus, by replacing $X$ by $Y$, we may assume that the surface is smooth.  
\end{proof}

We extensively use the following lemma. 
\begin{lemma} \label{lemma:d_effective}
If $D$ is $\mbQ$-effective, then $L$ is semiample.
\end{lemma}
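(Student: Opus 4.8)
The plan is to use Tanaka's theorem (Theorem \ref{theorem:tanaka_bpf}) as the fundamental engine, since it converts $\mbQ$-effectivity of $qL - K_X$ into semiampleness of $L$. Concretely, suppose $D = L - (K_X + \Delta)$ is $\mbQ$-effective. First I would observe that since $X$ is now smooth (by the preceding claim) and $L$ is nef, it suffices to exhibit a positive rational $q$ for which $qL - K_X$ is $\mbQ$-effective. The key algebraic identity is
\[
2L - K_X = L + \bigl(L - (K_X + \Delta)\bigr) + \Delta = L + D + \Delta.
\]
Here $L$ is nef, and more usefully, because $D$ is assumed $\mbQ$-effective there is an integer $m \geq 1$ with $h^0(X, mD) \neq 0$; moreover $\Delta$ is effective by hypothesis. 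So I would work with a suitable multiple: for $N$ large and divisible, $N(2L - K_X) = NL + ND + N\Delta$, where $ND$ is $\mbQ$-effective and $N\Delta$ is genuinely effective.

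The mild subtlety is that nefness of $L$ does not by itself give $\mbQ$-effectivity of $L$ (a nef divisor on a surface over $\Fp$ need not be $\mbQ$-effective a priori), so I cannot simply drop the $L$ term. The cleanest route is to pick $q = 2$ and argue that $2L - K_X = L + D + \Delta$ is $\mbQ$-effective as a sum of a nef divisor and a $\mbQ$-effective one. For this I would either (i) invoke that a nef divisor on a smooth projective surface has nonnegative intersection against itself, so by asymptotic Riemann--Roch $h^0(X, nL)$ grows unless $L^2 = 0$, reducing to the boundary case; or, more robustly, (ii) avoid needing $L$ to be $\mbQ$-effective on its own by absorbing it: since $D + \Delta$ is $\mbQ$-effective and $L$ is nef, for any ample $A$ the divisor $L + \epsilon A$ is ample hence big hence $\mbQ$-effective, and a limiting/perturbation argument pushes $\epsilon \to 0$. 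In practice the slickest version is simply that the sum of a nef $\mbQ$-divisor and a $\mbQ$-effective $\mbQ$-divisor is $\mbQ$-effective on a projective surface, which follows because nef divisors on projective varieties over any field are pseudoeffective and pseudoeffective plus effective stays in the (closure of the) effective cone — but I would want the honest $\mbQ$-effectivity, so I would lean on the specific feature of $\Fp$.

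Therefore the step I expect to be the main obstacle is precisely upgrading ``$L + D + \Delta$ lies in the closed effective cone'' to ``$L + D + \Delta$ is $\mbQ$-effective,'' since $\mbQ$-effectivity (nonvanishing of $h^0$ of an actual multiple) is strictly stronger than pseudoeffectivity and is where characteristic and the field $\Fp$ intervene. The way I would resolve this is to note that $D$ being $\mbQ$-effective means $mD \sim_{\mbQ} \Delta'$ for some effective $\Delta'$, so
\[
2L - K_X = L + D + \Delta \sim_{\mbQ} L + \tfrac{1}{m}\Delta' + \Delta,
\]
and then it is enough to see $L$ itself is $\mbQ$-effective; for a nef divisor on a smooth projective surface over $\Fp$ this follows from Theorem \ref{theorem:tanaka_bpf} applied with $D$ replaced by $L$ together with the Riemann--Roch estimate, or by a direct dichotomy on $L^2 > 0$ (use Theorem \ref{theorem:big}, giving semiampleness hence $\mbQ$-effectivity) versus $L^2 = 0$ (handle via the numerical structure). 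Once $q = 2$ works — that is, once $2L - K_X$ is shown $\mbQ$-effective — Tanaka's theorem immediately yields that $L$ is semiample, completing the lemma.
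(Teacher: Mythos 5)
Your proposal contains a genuine gap, but one created entirely by an unnecessary choice. You set $q=2$ in Tanaka's theorem, writing $2L-K_X = L + D + \Delta$, and you then correctly identify that the surviving $L$ term forces you to prove that the nef divisor $L$ is itself $\mbQ$-effective. That step is not justified by anything you offer: the perturbation $L+\epsilon A$ only yields pseudoeffectivity (as you concede), and the claim that a nef divisor on a smooth projective surface over $\Fp$ is $\mbQ$-effective ``by Riemann--Roch or a dichotomy on $L^2$'' is false as stated --- when $L^2=0$, $L\cdot K_X=0$ and $\chi(\OO_X)\le 0$, Riemann--Roch gives no section, and establishing $\mbQ$-effectivity of such nef divisors is precisely the hard content of the remaining cases of Theorem \ref{theorem:genshokurov} (the elliptic-fibration, Kodaira-dimension-zero, and ruled-surface cases). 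So your proposed resolution either begs the question or assumes something stronger than the lemma being proved.

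The fix is immediate and is what the paper does: take $q=1$. Then
\[
qL - K_X \;=\; L - K_X \;=\; \bigl(L-(K_X+\Delta)\bigr) + \Delta \;=\; D + \Delta,
\]
which is $\mbQ$-effective because $D$ is $\mbQ$-effective and $\Delta$ is effective (for $m$ sufficiently divisible, $h^0(X,m(D+\Delta))\ge h^0(X,mD)\neq 0$). Theorem \ref{theorem:tanaka_bpf} then gives semiampleness of $L$ with no need to discuss $\mbQ$-effectivity of $L$ at all. Your instinct to route everything through Tanaka's theorem is exactly right; the only error is the coefficient in front of $L$.
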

\begin{proof}
	As $D$ is $\QQ$-effective, then also $L-K_X = D + \Delta$ is $\QQ$-effective, 
	and so $L$ is semiample by Theorem \ref{theorem:tanaka_bpf}. 
\end{proof} 

\begin{claim} \label{claim:genshokurovmain} 
We can assume that all the following statements are true. 

\noindent
\begin{tabular}{lll}
\normalfont{(1)} $L \not \equiv 0$ and $D \not \equiv 0$, &
\normalfont{(2)} $L^2=0$, &
\normalfont{(3)} $D^2=0$, \\
\normalfont{(4)} $L \cdot \Delta = 0$, &
\normalfont{(5)} $L \cdot K_X = 0$, \qquad &
\normalfont{(6)} $(K_X + \Delta)\cdot \Delta = 0$, \\
\normalfont{(7)} $(K_X + \Delta)\cdot K_X = 0$, \qquad &
\normalfont{(8)} $\chi(\OO_X) \leq 0$. &
\end{tabular}
\end{claim}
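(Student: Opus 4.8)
The plan is to dispose of the eight reductions one at a time, each time using one of the three available outputs---Theorem \ref{theorem:big} (nef and big implies semiample), Theorem \ref{theorem:tanaka_bpf}, or Lemma \ref{lemma:d_effective}---to kill the ``easy'' case, and the Hodge index theorem together with Riemann--Roch to extract the numerical vanishings. Throughout I would use repeatedly that for a nef divisor $N \not\equiv 0$ on the smooth surface $X$ one has $N \cdot H > 0$ for every ample $H$ (by Hodge index), so $K_X - mN$ is not pseudoeffective and hence $h^2(X, mN) = h^0(X, K_X - mN) = 0$ for $m \gg 0$; combined with Riemann--Roch this yields $h^0(X, mN) \ge \chi(X, mN)$ for $m \gg 0$.

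Conditions (1)--(3) I would handle immediately. If $L \equiv 0$ then $L \Lq 0$ by Proposition \ref{fact::torsionpic0}, so $L$ is semiample; if $D \equiv 0$ then likewise $D \Lq 0$, so $D$ is $\mbQ$-effective and Lemma \ref{lemma:d_effective} applies. This gives (1). Since $L$ and $D$ are nef, $L^2, D^2 \ge 0$. If $L^2 > 0$ then $L$ is nef and big, hence semiample by Theorem \ref{theorem:big}, giving (2); if $D^2 > 0$ then $D$ is nef and big, hence $\mbQ$-effective, so Lemma \ref{lemma:d_effective} gives (3).

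The heart of the argument is the reduction to $L \cdot D = 0$. Suppose $L \cdot D > 0$. Then $L + D$ is nef with $(L+D)^2 = 2\,L\cdot D > 0$, hence big and therefore $\mbQ$-effective. As $L + D = 2L - (K_X + \Delta)$, the divisor $2L - K_X = (L + D) + \Delta$ is the sum of a $\mbQ$-effective and an effective divisor, hence $\mbQ$-effective; applying Theorem \ref{theorem:tanaka_bpf} to $L$ with $q = 2$ then shows $L$ is semiample. So I may assume $L \cdot D = 0$. Now the Hodge index theorem enters: since $L$ is nef with $L^2 = 0$ and $L \not\equiv 0$, the intersection form is negative semidefinite on $L^{\perp}$ with radical $\mbR L$; as $D \cdot L = 0$ and $D^2 = 0$, this forces $D \equiv \lambda L$ for some $\lambda > 0$ (positivity because $L$ and $D$ are nonzero and nef). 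This step, together with the $L+D$-big trick feeding Tanaka's theorem, is the crux; everything else is routine Riemann--Roch bookkeeping.

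It remains to reduce to $D \cdot \Delta = 0$ and then read off (4)--(8). From $L \cdot D = 0$ and $D^2=0$ I get $D \cdot K_X = D \cdot (L - D - \Delta) = -\,D \cdot \Delta \le 0$, so Riemann--Roch gives $h^0(X, mD) \ge \chi(\OO_X) + \tfrac{m}{2}\, D \cdot \Delta$ for $m \gg 0$. If $D \cdot \Delta > 0$ the right-hand side tends to infinity, making $D$ $\mbQ$-effective and finishing by Lemma \ref{lemma:d_effective}; hence I may assume $D \cdot \Delta = 0$. Using $D \equiv \lambda L$ with $\lambda > 0$, this gives $L \cdot \Delta = 0$, which is (4), and then $0 = L^2 = L\cdot K_X + L\cdot \Delta + L \cdot D$ yields $L \cdot K_X = 0$, which is (5). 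Since $K_X + \Delta = L - D \equiv (1-\lambda)L$, intersecting with $\Delta$ and with $K_X$ and invoking (4), (5) gives (6) and (7). Finally, with $D \cdot K_X = -\,D\cdot\Delta = 0$ we have $\chi(X, mD) = \chi(\OO_X)$, so $h^0(X, mD) \ge \chi(\OO_X)$ for $m \gg 0$; were $\chi(\OO_X) > 0$ then $D$ would be $\mbQ$-effective and we would again be done, so I may assume $\chi(\OO_X) \le 0$, which is (8).
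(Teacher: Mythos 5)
Your proposal is correct, and all eight reductions come out, but the route through the middle is genuinely different from the paper's. The paper extracts (4) and (5) from the single inequality $0 \le L \cdot D = -L\cdot K_X - L\cdot \Delta$: if $-L\cdot K_X > 0$ the Riemann--Roch computation forces $h^0(X,mL)$ to grow linearly, so $\kappa(X,L)=1$, and it then invokes the fact that a nef line bundle of Iitaka dimension one on a surface is semiample (citing \cite[Theorem 11.3.1]{FaA}); conditions (6) and (7) follow from a second Riemann--Roch computation for $mD$ and from the algebra of $L^2 = D^2 = L\cdot D = 0$. You instead dispose of the case $L\cdot D>0$ by observing that $L+D$ is then nef with $(L+D)^2 = 2L\cdot D>0$, hence big and $\mbQ$-effective, so that $2L - K_X = (L+D)+\Delta$ feeds Theorem \ref{theorem:tanaka_bpf} with $q=2$; and once $L\cdot D=0$ you use the Hodge index theorem to conclude $D \equiv \lambda L$ with $\lambda>0$, after which (4)--(7) are linear algebra once $D\cdot\Delta=0$ is forced by Riemann--Roch. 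Both arguments are sound. Your version buys independence from the $\kappa=1$ semiampleness result at the price of the Hodge-index proportionality step (which you should state carefully: for nef $L$ with $L^2=0$, $L\not\equiv 0$, the form is negative semidefinite on $L^\perp$ with radical $\mbR L$, proved by projecting off an ample class); the paper's version needs one more external citation but never needs the proportionality $D \equiv \lambda L$, which in its argument is only implicit in the identity $(K_X+\Delta)^2 = (L-D)^2 = 0$ used for (7).
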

\begin{proof}
	
	If $L \equiv 0$, then $L \sim_{\QQ} \OO_X$ by Proposition \ref{fact::torsionpic0}, 
	so $L$ is semiample. Thus, we may assume that $L \not \equiv 0$. 
	Analogously, we may assume that $D \not \equiv 0$.
	
	As $L$ and $D$ are nef, we get $L^2 \geq 0$ and $D^2 \geq 0$. If $L^2 > 0$, then, 
	by Theorem \ref{theorem:big}, the line bundle $L$ is semiample. Thus, we may assume that $L^2=0$. 
	If $D^2>0$, then $D$ is big, and so $\mathbb{Q}$-effective. 
	In this case $L$ is semiample by Lemma \ref{lemma:d_effective}. 
	Hence, we may assume $D^2 = 0$.
	
	Since $L \not \equiv 0$, we know that there exists a curve $C$ on $X$ satisfying $L \cdot C >0$. 
	Take an ample divisor $A$ such that $A-C$ is effective. Then $L \cdot A = L \cdot C + L \cdot (A-C) > 0$. 
	If $m$ is sufficiently large so that it satisfies $(K_X - mL) \cdot A < 0$, 
	then $h^2(X,mL) = h^0 (X,K_X - mL) = 0$. 
	The Riemann-Roch theorem gives
	\begin{align*}
	h^0(X,mL) &= h^1(X,mL) + \frac{1}{2} mL \cdot (mL - K_X) + \chi(\OO_X)  \\
	 		  &= h^1(X,mL) - \frac{1}{2} mL \cdot K_X + \chi(\OO_X). 
	\end{align*}
	
	\noindent
	As $L$ and $D$ are nef, it follows that
	\[
	0 \leq L \cdot D = -L \cdot K_X - L \cdot \Delta.
	\]
Since $\Delta$ is effective and $L$ is nef, we find $0 \leq L \cdot D \leq -L \cdot K_X$. 
If $-L\cdot K_X > 0$, then $\kappa(X,L) = 1$ by the calculation of $h^0(X, mL)$ above. 
A nef line bundle $L$ with $\kappa(X,L) = 1$ is always semiample (see for instance \cite[Theorem 11.3.1]{FaA}). 
Thus, we may assume that $L \cdot \Delta = 0$ and $L \cdot K_X =0$.	

As above, $h^2(X,mD) = 0$ holds 
for sufficiently large $m$, and so the Riemann-Roch theorem gives
\begin{align*}
h^0(X,mD) &= h^1(X,mD) - \frac{1}{2} mD \cdot K_X + \chi(\OO_X) \\
		  &= h^1(X,mD) + \frac{1}{2} mD \cdot (D - L + \Delta) + \chi(\OO_X) \\
		  &= h^1(X,mD) + \frac{1}{2} mD \cdot \Delta + \chi(\OO_X) \\
		  &= h^1(X,mD) - \frac{1}{2} m(K_X + \Delta) \cdot \Delta + \chi(\OO_X).
\end{align*}
If $-(K_X + \Delta) \cdot \Delta > 0$, then $D$ is $\QQ$-effective and 
by Lemma \ref{lemma:d_effective} the line bundle $L$ is semiample. 
Since $0 \le D \cdot \Delta = -(K_X + \Delta) \cdot \Delta$ holds by the nefness of $D$, 
we may assume $(K_X + \Delta) \cdot \Delta = 0$. 
Given $D^2 = L^2 = D \cdot L = 0$, it follows that $(K_X + \Delta) \cdot K_X = 0$. 

By the Riemann-Roch theorem, we get $h^0 (X, mD) = h^1(X, mD) + \chi(\mcO _X)$. 
If $\chi(\OO_X) > 0$, then $D$ is $\QQ$-effective 
and by Lemma \ref{lemma:d_effective} the line bundle $L$ is semiample. 
Hence, we may assume that $\chi(\OO_X) \leq 0$.  
\end{proof}

We divide the proof into cases depending on the Kodaira dimension.

\vspace{2mm}

\noindent
\textbf{\underline{Case 1.}}\ \ 
Assume $\kappa(X) \geq 0$.

\begin{claim} \label{claim:genshokurovnonminimal} 
We may assume that $K_X$ is nef. 
\end{claim}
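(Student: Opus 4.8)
The plan is to pass to a minimal model and to verify that all the hypotheses of Theorem \ref{theorem:genshokurov}, together with the conclusion we are after, are preserved under this reduction. Since $\kappa(X) \geq 0$, the smooth surface $X$ is not birationally ruled, so by the classification of surfaces it admits a birational morphism $\pi \colon X \to X_{\min}$ onto its minimal model, which is again a smooth projective surface over $\Fp$ with $K_{X_{\min}}$ nef. Write $K_X = \pi^* K_{X_{\min}} + E$, where $E$ is an effective $\pi$-exceptional divisor whose support is the whole exceptional locus. First I would use Claim \ref{claim:genshokurovmain}(5): from $0 = L \cdot K_X = L \cdot \pi^* K_{X_{\min}} + L \cdot E$ and the nefness of $L$ and of $\pi^* K_{X_{\min}}$ together with the effectivity of $E$, both summands must vanish; since $L$ is nef and $E$ has positive coefficient along every exceptional prime, $L \cdot E_i = 0$ for each $\pi$-exceptional curve $E_i$. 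Factoring $\pi$ into contractions of $(-1)$-curves, this forces $L = \pi^* L_{\min}$ for a line bundle $L_{\min}$ on $X_{\min}$, which is nef because $L_{\min} \cdot C = L \cdot \pi^* C \geq 0$ for every curve $C \subset X_{\min}$. By Proposition \ref{proposition:cover}, $L$ is semiample if and only if $L_{\min}$ is.

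Next I would transfer the remaining hypotheses to $(X_{\min}, \Delta_{\min})$ with $\Delta_{\min} \coloneq \pi_* \Delta$, which is effective. Pushing forward the defining relation gives $D_{\min} \coloneq \pi_* D = L_{\min} - (K_{X_{\min}} + \Delta_{\min})$, so it remains to check that $D_{\min}$ is nef. The key point is a numerical proportionality on $X$: by Claim \ref{claim:genshokurovmain} we have $L \not\equiv 0$, $L^2 = 0$, $D^2 = 0$, and $L \cdot D = -L \cdot K_X - L \cdot \Delta = 0$, so the Hodge index theorem applied to the class $D \in L^{\perp}$ yields $D \equiv \lambda L$ for some $\lambda \in \mbQ_{\geq 0}$. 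Hence, for every curve $C \subset X_{\min}$, the projection formula gives $D_{\min} \cdot C = D \cdot \pi^* C = \lambda\, L \cdot \pi^* C = \lambda\, L_{\min} \cdot C \geq 0$, so $D_{\min}$ is nef. We may therefore replace $(X, \Delta, L)$ by $(X_{\min}, \Delta_{\min}, L_{\min})$: the new data still satisfies the hypotheses of Theorem \ref{theorem:genshokurov} (and, by re-running Claim \ref{claim:genshokurovmain}, all of its conclusions, whose proof used only the nefness of $L$ and $D$ and the smoothness of the surface over $\Fp$), still lies in Case 1, and now has $K_X$ nef.

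The only genuinely delicate step is the last one: pushing a nef divisor forward along a birational morphism does not in general preserve nefness, so the naive choice $\Delta_{\min} = \pi_* \Delta$ could a priori fail to keep $D_{\min}$ nef. What rescues the argument is that all the relevant classes on $X$ are numerically proportional to $L$ — a consequence of the vanishing intersection numbers collected in Claim \ref{claim:genshokurovmain}, via the Hodge index theorem — which makes $D_{\min}$ numerically a nonnegative multiple of the nef class $L_{\min}$. I expect this Hodge-index proportionality, rather than the existence of the minimal model or the descent $L = \pi^* L_{\min}$, to be the crux of the claim.
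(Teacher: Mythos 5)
Your proof is correct and follows the same overall route as the paper: pass to the minimal model, use $L\cdot K_X=0$ plus nefness to kill $L$ on every exceptional curve, descend $L$ to $X_{\min}$, and push forward $\Delta$ and $D$. The one place you diverge is the nefness of $D_{\min}$, which you obtain by first proving $D\equiv\lambda L$ via the Hodge index theorem and you single out as the crux. That step is valid, but it is an unnecessary detour: for a birational morphism $\pi$ of smooth projective surfaces the pushforward of \emph{any} nef divisor is nef, since for every curve $C\subset X_{\min}$ the projection formula gives $\pi_*D\cdot C = D\cdot\pi^*C\ge 0$, the total transform $\pi^*C$ being effective. This is exactly what the paper invokes (``$\pi_*L$ and $\pi_*D$ are nef, because $L$ and $D$ are nef''), with no appeal to the numerical proportionality $D\equiv\lambda L$ or to the intersection-number vanishings of Claim \ref{claim:genshokurovmain} beyond $L\cdot K_X=0$. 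So the genuinely load-bearing ingredients are the ones you downplay — the descent $L=\pi^*\pi_*L$ via negativity of the exceptional intersection form, and the elementary projection-formula nefness of pushforwards — while the Hodge-index proportionality, though true and occasionally useful elsewhere in the argument, is not needed here.
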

\begin{proof}
Let $\pi \colon X \rightarrow X_{\text{min}}$ be the minimal model of $X$. 
By $\pi_* L$ we denote the pushforward of $L$ as a divisor. 

By the assumption $\kappa (X) \ge 0$, we have that 
$K_X$ is $\mbQ$-linearly equivalent to an effective 
$\mbQ$-divisor containing every $\pi$-exceptional curve in its support. 
Since $L \cdot K_X = 0$ and $L$ is nef, 
it follows that $L \cdot E = 0$ for every $\pi$-exceptional curve $E$. 
Hence, we get $L = \pi^*\pi_*L$, by the negativity of the intersection form on the exceptional locus 
(cf.~ \cite[Lemma 3.40]{KollarMori}). 

Since $L = \pi^*\pi_*L$, it is sufficient to show the semiampleness of $\pi_* L$. 
Note that $\pi _* L$ and $\pi _* D$ are  nef, because $L$ and $D$ are nef. 
Further, we have $\pi _* D = \pi _* L - (K_{X_{\text{min}}} + \pi _* \Delta)$. 
Therefore, we can reduce the problem to the case of the minimal model $X_{\text{min}}$. 
\end{proof}

In what follows, we assume that $X$ is minimal. 
We use the classification of minimal surfaces in positive characteristic 
(see for instance \cite{Mumford}, \cite{BM2}, \cite{BM3}, and \cite{Liedtke}). 

\vspace{2mm}

\noindent
\textbf{\underline{Case 1.1.}}\ \ 
Assume $\kappa(X) = 2$. 

We can write $K_X = A + E$ 
for an ample $\mbQ$-divisor $A$ and an effective $\mbQ$-divisor $E$, because $K_X$ is big. 
Since $L$, $D$ are nef and $L \cdot K_X = D \cdot K_X = 0$, 
it follows that $L \cdot A = D \cdot A = 0$. Thus, $(L-D) \cdot A = (K_X + \Delta) \cdot A = 0$. 
We get a contradiction 
\[
0 < A^2 \leq (K_X + \Delta) \cdot A = 0. 
\]
Hence, there are no line bundles 
$L$ satisfying the assumptions in Claim \ref{claim:genshokurovmain}. 

\vspace{2mm}

\noindent
\textbf{\underline{Case 1.2.}}\ \ 
Assume $\kappa(X) = 1$. 

In our case, $K_X$ is semiample and it gives an elliptic or quasi-elliptic fibration $f \colon X \to B$. 
Let $F$ be its general fiber. 
Then $K_X \equiv a F$ holds
for some positive rational number $a$. 

Since $D \cdot K_X = 0$, it follows that $D \cdot F = 0$. 
Therefore, $D$ is $f$-numerically trivial by the nefness of $D$. 
Since $D$ is nef and $f$-numerically trivial, it satisfies $D \equiv b F$ for some $b \ge 0$, 
by Lemma \ref{lemma:Lehmann}. 
Hence, $D$ is $\mbQ$-effective by Proposition \ref{fact::torsionpic0}. 
Therefore, $L$ is semiample by Lemma \ref{lemma:d_effective}. 

\begin{lemma} \label{lemma:Lehmann}
Let $f \colon X \rightarrow B$ be a surjective morphism satisfying $f_* (\mcO _X) = \mcO _B$ 
from a smooth projective surface $X$ to a smooth projective curve $B$. 
Suppose that $L$ is an $f$-numerically trivial nef $\mbQ$-Cartier $\mbQ$-divisor. 
Then $L \equiv bF$ for some $b \geq 0$, where $F$ denotes a general fiber of $f$.
\end{lemma}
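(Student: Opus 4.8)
The plan is to reduce everything to the Hodge index theorem, once I have recorded three elementary numerical facts about the general fiber $F$. Since $B$ is connected and smooth, all fibers of $f$ are algebraically, hence numerically, equivalent, and two distinct general fibers are disjoint; therefore $F^2 = F \cdot F' = 0$. Next, $F$ is a nonzero effective divisor, so for any ample divisor $H$ we have $F \cdot H > 0$; in particular $F \not\equiv 0$. Finally, the assumption that $L$ is $f$-numerically trivial means exactly that $L \cdot C = 0$ for every curve $C$ contained in a fiber of $f$, and applying this to $C = F$ gives $L \cdot F = 0$.

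With these in hand, I would fix an ample divisor $H$ and set
\[
b \coloneq \frac{L \cdot H}{F \cdot H}.
\]
This is well defined because $F \cdot H > 0$, and $b \geq 0$ since $L$ is nef (so $L \cdot H \geq 0$) while $F \cdot H > 0$. By construction $(L - bF) \cdot H = 0$, so the class $L - bF$ lies in the orthogonal complement $H^{\perp}$ of $H$ inside the real Néron--Severi space with respect to the intersection form.

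The key step is the Hodge index theorem: on a smooth projective surface the intersection form has signature $(1, \rho - 1)$, hence it is negative definite on $H^{\perp}$. Using $L \cdot F = 0$ and $F^2 = 0$ I compute
\[
(L - bF)^2 = L^2 - 2b\,(L \cdot F) + b^2 F^2 = L^2 \geq 0,
\]
the last inequality holding because $L$ is nef. On the other hand, $L - bF \in H^{\perp}$ forces $(L - bF)^2 \leq 0$ by negative definiteness. Therefore $(L - bF)^2 = 0$, and negative definiteness on $H^{\perp}$ then yields $L - bF \equiv 0$, that is, $L \equiv bF$ with $b \geq 0$, as desired.

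I do not expect a genuine obstacle here: the whole argument is a single application of the Hodge index theorem after the identities $F^2 = 0$, $L \cdot F = 0$, and $F \cdot H > 0$ are established. The only point worth flagging is that one never has to verify $L^2 = 0$ as a separate input; it falls out automatically, since the computation above sandwiches $(L - bF)^2 = L^2$ between the nef bound $\geq 0$ and the Hodge-index bound $\leq 0$.
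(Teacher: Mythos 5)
Your argument is correct, and it is a genuinely different (and more self-contained) route than the paper's: the paper does not prove Lemma \ref{lemma:Lehmann} at all, but simply cites \cite[Lemma 2.4]{Lehmann}, which is a general statement about nef, relatively numerically trivial classes proved there by other means. Your proof is the classical ``light cone'' argument: after recording $F^2=0$, $F\cdot H>0$, and $L\cdot F=0$ (the last following from $f$-numerical triviality by linearity, since $F$ is an effective sum of fiber components), you normalize $b=(L\cdot H)/(F\cdot H)$ and apply the Hodge index theorem on $H^{\perp}$ to conclude $(L-bF)^2=0$ and hence $L-bF\equiv 0$. Every step checks out, including in positive characteristic, where the Hodge index theorem remains valid; note also that your proof only uses the weaker hypothesis $L\cdot F=0$, which for nef $L$ is equivalent to $f$-numerical triviality. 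What the citation to Lehmann buys the authors is generality (his lemma is not restricted to surfaces fibered over curves); what your argument buys is a two-line, reference-free proof perfectly adequate for the case actually used in Case 1.2 of Theorem \ref{theorem:genshokurov}. Your closing observation that $L^2=0$ comes out for free from the sandwich $0\le L^2=(L-bF)^2\le 0$ is also accurate.
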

\begin{proof}
See for instance \cite[Lemma 2.4]{Lehmann}.
\end{proof}

\vspace{2mm}

\noindent
\textbf{\underline{Case 1.3.}}\ \ 
Assume $\kappa(X) = 0$.

By the classification of minimal surfaces, 
there are five possibilities: 
a K3 surface, an Enriques surface, an abelian surface, a hyperelliptic surface, or a quasi-hyperelliptic surface. 

If $X$ is a K3 surface or an Enriques surface, then $\chi(\OO_X)=2$ or $\chi(\OO_X)=1$, respectively,
which contradicts Claim \ref{claim:genshokurovmain}.

If $X$ is an abelian surface, then every nef divisor is numerically equivalent to a semiample divisor
(see Proposition \ref{proposition:abelian}). 
Therefore, $L$ is semiample by Proposition \ref{fact::torsionpic0}. 

If $X$ is a hyperelliptic surface, then $X$ is a finite quotient of an abelian surface by a finite group. 
Therefore, we have a surjective morphism $A \to X$ from an abelian surface $A$. 
Since $L|_{A}$ is a nef line bundle on an abelian surface, it is semiample (see Proposition \ref{proposition:abelian}).
Hence, $L$ is also semiample by Proposition \ref{proposition:cover}.

If $X$ is a quasi-hyperelliptic surface, then $X$ can be written as a finite quotient $E \times C \to X$, 
where $E$ is an elliptic curve and $C$ is a rational curve with a cusp. 
Therefore, we have a surjective morphism $X' \coloneq E \times \mbP ^1 \to X$. 
Any divisor on $X'$ is numerically equivalent to $a F_1 + b F_2$ with $a, b \in \mbQ$, where 
$F_1$ is the fiber class of $X' \to E$ and $F_2$ is the fiber class of $X' \to \mbP ^1$. 
Hence, any nef divisor on $X'$ is numerically equivalent to a semiample divisor. 
Thus, we can conclude that $L$ is semiample by Proposition \ref{fact::torsionpic0} and Proposition \ref{proposition:cover}.

\vspace{2mm}

\noindent
\textbf{\underline{Case 2.}}\ \ 
Assume $\kappa(X) = -\infty$. 

By $\chi(\mcO _X) \le 0$, the surface $X$ is irrational. 
Thus, we can assume that $f \colon X \rightarrow B$ is a birationally ruled surface, 
where $B$ is a curve with $g(B) \geq 1$.

We need the following lemma, which can be found in the proof of \cite[Theorem 12.4]{Tanaka}. 

\begin{lemma} 
Let $C$ be an $f$-horizontal curve on $X$ such that $D \cdot C =0$. 
Then $D$ is $\QQ$-effective. 
\end{lemma}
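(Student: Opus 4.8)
The plan is to use the vanishing $D \cdot C = 0$ to pin down the numerical class of $D$, show that this class is represented by an effective divisor, and then invoke the fact (Proposition~\ref{fact::torsionpic0}) that over $\Fp$ numerical triviality is $\mbQ$-linear triviality. I would organise it as follows.

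First I record that $D \cdot F > 0$ for a general fibre $F$ of $f$. Indeed, if $D \cdot F = 0$ then $D$ is $f$-numerically trivial, so $D \equiv bF$ for some $b \ge 0$ by Lemma~\ref{lemma:Lehmann}; intersecting with the horizontal curve $C$, for which $F \cdot C > 0$, and using $D \cdot C = 0$ forces $b = 0$, hence $D \equiv 0$, contradicting Claim~\ref{claim:genshokurovmain}. Next I apply the Hodge index theorem to the isotropic class $D$ and the curve $C$. From $D^2 = 0$, $D \cdot C = 0$ and $D \not\equiv 0$ one first gets $C^2 \le 0$. In the boundary case $C^2 = 0$ the classes $D$ and $C$ would span a totally isotropic plane in $\operatorname{NS}(X)$, which is impossible in signature $(1,\rho-1)$ unless $D$ and $C$ are proportional; then $D \equiv \lambda C$ with $\lambda = (D \cdot F)/(C \cdot F) \in \mbQ_{>0}$, so $D$ is numerically equivalent to the effective divisor $\lambda C$ and we finish by Proposition~\ref{fact::torsionpic0}. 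The genuine remaining case is $C^2 < 0$, in which $D$ is no longer proportional to $C$ inside $\operatorname{NS}(X)$.

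To treat $C^2 < 0$ I would turn $C$ into a section by base change. Let $B' \to B$ be the composite of the normalization $\widetilde{C} \to C$ with $f|_C$, form the normalized fibre product $X' := (X \times_B B')^{\nu}$ with $\pi \colon X' \to X$ finite surjective and $f' \colon X' \to B'$ the induced ruling, and let $\Sigma \subset X'$ be the section lying over $C$. Setting $D' := \pi^* D$, the projection formula gives $D' \cdot \Sigma = D \cdot C = 0$, while $D'$ is nef, $(D')^2 = 0$ and $D' \cdot F' > 0$. Passing to a relatively minimal model $f_0 \colon X_0 \to B'$, where $\operatorname{NS}(X_0) = \langle \Sigma_0, F'\rangle$ has rank two with $\Sigma_0 \cdot F' = 1$ and $(F')^2 = 0$, I write the class of $D'$ as $p\Sigma_0 + qF'$; then $p = D' \cdot F' > 0$, the relation $0 = D' \cdot \Sigma_0 = p\,\Sigma_0^2 + q$ gives $q = -p\,\Sigma_0^2$, and $0 = (D')^2 = -p^2\,\Sigma_0^2$ forces $\Sigma_0^2 = 0$ and hence $q = 0$. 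Thus $D' \equiv p\Sigma_0$, so $D'$ is numerically equivalent to an effective divisor on $X'$.

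Finally, since $X'$ is projective over $\Fp$, Proposition~\ref{fact::torsionpic0} upgrades this to $\mbQ$-linear equivalence, so $m\pi^* D$ is linearly equivalent to an effective divisor $E'$ for some $m \ge 1$. Pushing forward and using $\pi_*\pi^* D = (\deg \pi)\,D$, we get that $(\deg \pi)\,mD$ is $\mbQ$-linearly equivalent to the effective divisor $\pi_* E'$, i.e. $D$ is $\mbQ$-effective, as required; note that pushforward of cycles preserves effectivity and $\mbQ$-linear equivalence, so no separability hypothesis on $B' \to B$ is needed. I expect the middle step to be the main obstacle: one must control the birational modification $X' \to X_0$ so that the nefness of $D'$ and its effective numerical class survive the passage to the relatively minimal ruled surface. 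This is precisely the point that rules out the a priori troublesome configuration $C^2 < 0$ and converts it into the transparent rank-two computation on $X_0$.
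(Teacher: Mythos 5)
Your argument is fine through the Hodge-index step (the reduction to $D\cdot F>0$ via Lemma \ref{lemma:Lehmann} and Claim \ref{claim:genshokurovmain}, and the case $C^2=0$ giving $D\equiv\lambda C$), but the case $C^2<0$ --- which you rightly call the genuine remaining case --- contains an unfilled gap at exactly the point you flag. The rank-two computation on the relatively minimal model $X_0$ uses the relations $D'\cdot\Sigma_0=0$ and $(D')^2=0$ \emph{read against the basis} $\Sigma_0,F'$ of $\operatorname{NS}(X_0)_{\mbQ}$, and this is only legitimate if $D'$ is the pullback of a class from $X_0$, i.e.\ if $D'\cdot E=0$ for every curve $E$ contracted by $\pi'\colon X'\to X_0$. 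Nothing in your setup guarantees that. If instead you work with the pushforward $D_0:=\pi'_*D'$, then writing $\pi'^*D_0=D'+\Theta$ with $\Theta$ effective and $\pi'$-exceptional gives $(D_0)^2=(D')^2+D'\cdot\Theta$ and $D_0\cdot\Sigma_0=D'\cdot\Sigma+\Theta\cdot\Sigma$, both of which may be strictly positive, so neither hypothesis of your computation transfers; and even a proof that $D_0$ is $\mbQ$-effective would only make $D'+\Theta$ $\mbQ$-effective, whereas your final pushforward to $X$ needs $\mbQ$-effectivity of $D'=\pi^*D$ itself. So the step you describe as ``the main obstacle'' is not a technicality to be controlled later --- it is the entire content of the lemma in this case, and the proposal does not prove it. (There are also smaller debts: $X'$ is only normal, so one must resolve before speaking of a relatively minimal model, and the section $\Sigma$ must be shown to survive.)

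For comparison, the paper's proof is cohomological and bypasses all of this. Since $C$ is $f$-horizontal, $h^1(C,\mcO_C)\ge g(B)$, while Riemann--Roch and Claim \ref{claim:genshokurovmain} give $h^0(X,mD)=h^1(X,mD)+\chi(\mcO_X)=h^1(X,mD)+1-g(B)$ for $m\gg 0$. The hypothesis $D\cdot C=0$ gives $D|_C\equiv 0$, and Proposition \ref{fact::torsionpic0} --- the one genuinely $\Fp$-specific input --- makes $mD|_C$ trivial for sufficiently divisible $m$; the resulting exact sequence
\[
0 \longrightarrow \mcO_X(mD-C) \longrightarrow \mcO_X(mD) \longrightarrow \mcO_C \longrightarrow 0,
\]
together with $h^2(X,mD-C)=0$ for $m\gg 0$, yields $h^1(X,mD)\ge h^1(C,\mcO_C)\ge g(B)$ and hence $h^0(X,mD)\ge 1$. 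You use the arithmetic of $\Fp$ only at the very end (numerical implies $\mbQ$-linear equivalence), whereas the paper uses it to produce the section directly; if you want to rescue your route you must prove that the nef isotropic class $D'$ actually descends to a relatively minimal model, which is not easier than the lemma itself.
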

\begin{proof}  
Since $C$ is a horizontal curve, it holds that $g(B) \le h^1(C, \mcO _C)$. 
By the Riemann-Roch theorem, we get 
\[
h^0(X, mD) = h^1(X, mD) + \chi (\mcO _X) = h^1(X, mD) + 1 - g(B), 
\]
so it is sufficient to show $h^1(X, mD) \ge h^1(C, \mcO _C)$ for some $m > 0$. 

Since $D \cdot C = 0$, we have $D |_{C} \equiv 0$. 
Hence, by Proposition \ref{fact::torsionpic0} we can conclude that 
$mD |_{C}$ is trivial for sufficiently divisible $m>0$. 
Therefore, we get an exact sequence
\[
0 \longrightarrow \mcO _X (mD-C) \longrightarrow \mcO _X (mD) \longrightarrow \mcO _C \longrightarrow 0. 
\]

By the same reason as before, $h^2(X, mD -C) = 0$ holds for sufficiently large $m$. 
Hence, we get $h^1(X, mD) \ge h^1(C, \mcO _C)$. 
\end{proof}
For any irreducible component $C$ of $\Delta$, it follows that $D \cdot C = 0$, because $D$ is nef and $D \cdot \Delta = 0$. 
In particular, if $\Delta$ has an $f$-horizontal component, 
then the lemma above implies that $D$ is $\QQ$-effective, 
and hence $L$ is semiample by Lemma \ref{lemma:d_effective}. 
Thus, in what follows, we may assume that $\Delta$ has only $f$-vertical components. 

\begin{claim} 
Under these assumptions, it follows that $\Delta=0$, $g(B)=1$, 
and $X$ is a minimal ruled surface. 
\end{claim}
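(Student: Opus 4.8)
The plan is to reduce the whole claim to numerical statements via the Hodge index theorem and then to close with Noether's formula. Recall from the proof of Claim~\ref{claim:genshokurovmain} that $L\cdot D=0$; together with $D^2=0$, $L^2=0$ and $L\not\equiv 0$, the Hodge index theorem forces $D\equiv rL$ for some rational $r$, and since $D$ is nef and $D\not\equiv 0$ we get $r>0$. Applying the same reasoning to $K_X+\Delta$, which satisfies $(K_X+\Delta)\cdot L=L\cdot K_X+L\cdot\Delta=0$ and $(K_X+\Delta)^2=(K_X+\Delta)\cdot K_X+(K_X+\Delta)\cdot\Delta=0$ (cf.\ Claim~\ref{claim:genshokurovmain}), gives $K_X+\Delta\equiv sL$ for some rational $s$. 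I would then pin down the sign of $s$ by restricting to a general fibre $F$ of $f$: as $F$ is a smooth rational curve, $F^2=0$ and $K_X\cdot F=-2$, while $\Delta\cdot F=0$ because $\Delta$ is $f$-vertical, so $s\,(L\cdot F)=(K_X+\Delta)\cdot F=-2$. Hence $L\cdot F>0$ and $s<0$, from which $r>1$ and $-(K_X+\Delta)\equiv(r-1)L$ is nef.

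Next I would dispose of the horizontal case. For any $f$-horizontal curve $C$ we have $D\cdot C=r\,(L\cdot C)$, so $D\cdot C=0$ precisely when $L\cdot C=0$. If some $f$-horizontal $C$ satisfies $L\cdot C=0$, then the lemma immediately preceding the claim shows that $D$ is $\mbQ$-effective, whence $L$ is semiample by Lemma~\ref{lemma:d_effective} and Theorem~\ref{theorem:genshokurov} is already proved. Thus I may assume that $L\cdot C>0$ for every $f$-horizontal curve $C$.

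The heart of the matter is to prove that $X$ is relatively minimal over $B$. Every irreducible component $\Gamma$ of $\Delta$ is $f$-vertical and satisfies $L\cdot\Gamma=0$ (as $L$ is nef and $L\cdot\Delta=0$), whereas a general fibre has $L\cdot F>0$; hence such a $\Gamma$ can occur only as a component of a \emph{reducible} fibre, that is, only when $X$ fails to be relatively minimal. Vertical $(-1)$-curves $E$ would be removed as follows: if $L\cdot E=0$ then, $L$ being nef, $L$ is the pull-back of a nef class under the contraction of $E$, and the semiampleness of $L$ is equivalent to that of the pushed-down bundle on the blown-down surface, so after finitely many such contractions I may assume no vertical $(-1)$-curve has $L$-degree zero. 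Once relative minimality is reached, every fibre is an irreducible $\mathbb{P}^1$ numerically equal to $F$, so all $f$-vertical curves have positive $L$-degree; since $L\cdot\Delta=0$, this forces $\Delta=0$.

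Finally, with $\Delta=0$ we obtain $K_X\equiv sL$ and hence $K_X^2=s^2L^2=0$. Feeding this into Noether's formula $\chi(\mcO_X)=\tfrac{1}{12}\bigl(K_X^2+e(X)\bigr)$, together with $\chi(\mcO_X)=1-g(B)$ and $e(X)=4\,(1-g(B))$ for a geometrically ruled surface, yields $12\,(1-g(B))=4\,(1-g(B))$, so $g(B)=1$ and $X$ is a minimal ruled surface over an elliptic curve, as asserted. The main obstacle is the penultimate step: the numerical identities $K_X^2=\Delta^2=8\,(1-g(B))-k$ (with $k$ the number of blow-downs to the geometrically ruled model) are by themselves consistent with $k>0$ and $g(B)\ge 2$, so relative minimality cannot be read off from intersection numbers alone. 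Eliminating the surviving vertical $(-1)$-curves — those with positive $L$-degree, for which $(K_X+\Delta)\cdot E=s\,(L\cdot E)<0$ gives no immediate contradiction — must instead be forced using the nefness of $L$, the contraction of $L$-trivial curves, and the transfer of semiampleness along birational maps over $\Fp$.
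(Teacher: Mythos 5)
Your opening reductions are sound: the Hodge index theorem does give $D\equiv rL$ and $K_X+\Delta\equiv sL$ from the vanishings in Claim \ref{claim:genshokurovmain}, the fibre computation correctly yields $s<0$, and the disposal of $f$-horizontal curves of $L$-degree zero via the preceding lemma and Lemma \ref{lemma:d_effective} is legitimate. But there is a genuine gap exactly where you flag one, and that gap is the entire content of the claim: you never establish that $X$ is relatively minimal, nor that $\Delta=0$. Contracting only the vertical $(-1)$-curves $E$ with $L\cdot E=0$ need not terminate at a geometrically ruled surface, because every reducible fibre contains a component of positive $L$-degree (indeed $L\cdot F>0$), and a vertical $(-1)$-curve of positive $L$-degree satisfies $(K_X+\Delta)\cdot E=s(L\cdot E)<0$ and $\Delta\cdot E=1+s(L\cdot E)<1$, which is consistent with everything you have derived. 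The numerical framework you set up therefore cannot close the argument, and no mechanism is supplied to do so.

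The paper closes it by a different device, and the input you are missing is the Zariski lemma applied to a \emph{vertical} divisor of nonnegative self-intersection. Write $\pi\colon X\to X_{\text{min}}$ for a relatively minimal model and $K_X\sim\pi^*K_{X_{\text{min}}}+E$ with $E$ effective and $\pi$-exceptional, hence $f$-vertical, just like $\Delta$. From $(K_X+\Delta)\cdot\Delta=(K_X+\Delta)\cdot K_X=0$ one gets $\Delta^2=-K_X\cdot\Delta=K_X^2$, and, using $\pi^*F\cdot\Delta=0$ and the expression of $K_{X_{\text{min}}}$ in terms of a normalized section $C_0$ and a fibre, one finds $(E+\Delta)\cdot\Delta=2\pi^*C_0\cdot\Delta\ge 0$ and hence
\[
(E+\Delta)^2\ \ge\ E^2-\Delta^2\ =\ E^2-K_X^2\ =\ -K_{X_{\text{min}}}^2\ =\ 8(g(B)-1)\ \ge\ 0.
\]
Since $E+\Delta$ is supported on fibres, the Zariski lemma forces $(E+\Delta)^2=0$ and $E+\Delta\equiv p\,\pi^*F$; intersecting with $\pi^*C_0$ gives $p=0$, so the effective divisor $E+\Delta$ is numerically trivial and therefore zero. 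This yields $E=0$ and $\Delta=0$, and because the displayed inequality must be an equality, $g(B)=1$ --- all in one stroke, with no need to track individual $(-1)$-curves or to transfer semiampleness across contractions. I recommend you replace the third step of your argument by this computation.
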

\begin{proof} 
Let $\pi \colon X \rightarrow X_{\text{min}}$ be a minimal model of $X$.
We have $K_X \sim \pi^*K_{X_{\text{min}}} + E$, 
where $E$ is an exceptional divisor. 
We refer the reader to \cite[Ch V, Section 2]{Hartshorne} 
for the properties of ruled surfaces. 
It holds that 
\[
K_{X_{\text{min}}} \equiv -2C_0 + (2g(B)-2 - e)F
\]
for $C_0$ a normalized section, 
$e=-C_0^2$, and $F$ a general fiber of $X_{\text{min}} \to B$. 
Note that $K_{X_{\text{min}}}^2 = 8(1-g(B))$.

Since $(K_X+\Delta)\cdot \Delta=0$ and $(K_X+\Delta) \cdot K_X = 0$, we get 
\[
\Delta^2  = -K_X \cdot \Delta = K_X^2.
\]
As $\Delta$ has only $f$-vertical components, we have $\pi^*F \cdot \Delta = 0$, and so
\[
0 = (K_X + \Delta) \cdot \Delta = -2\pi^*C_0 \cdot \Delta + (E + \Delta) \cdot \Delta.
\]
Since $\pi^*C_0 \cdot \Delta \geq 0$, it follows that $E \cdot \Delta \geq -\Delta^2 $. 
Therefore,
\begin{align*}
(E + \Delta)^2 
&= E^2 + 2E \cdot \Delta + \Delta^2 \geq E^2 - \Delta^2 \\
&= E^2 - K_X^2 = - K_{X_{\text{min}}}^2 = 8(g(B)-1) \geq 0.
\end{align*}

By the Zariski lemma (cf.~ Theorem 1.23 in \cite[Section 9]{Liu}), 
the intersection form on $f$-vertical fibers is seminegative definite with one-dimensional 
radical equal to the span of a general fiber, so $(E + \Delta)^2=0$ and $E + \Delta \equiv \pi^*pF$ for some $p \in \QQ$. 

Since all the inequalities must be equalities, it follows that $E \cdot \Delta = -\Delta^2$ and $g(B)=1$. 
Furthermore, we have 
$2\pi^*C_0 \cdot \Delta = (E + \Delta) \cdot \Delta$, and thus
\[
0 = \pi^*C_0 \cdot \Delta = \pi^*C_0 \cdot (E + \Delta) = p.
\]
It implies that $E + \Delta= 0$. 
Since $\Delta$ and $E$ are both effective divisors, 
we get $\Delta = 0$ and $E = 0$. 
Hence, $X$ is minimal. 
\end{proof}

By this claim, we can assume that $X$ is a minimal ruled surface over an elliptic curve. 
In this case, it is well-known that $\operatorname{NEF}(X) \subset \operatorname{NE}(X)$ holds
(see Proposition \ref{proposition:ruled}). 
We can conclude that the nef divisor $D$ is $\mbQ$-effective and 
$L$ is semiample by Lemma \ref{lemma:d_effective}. 
\end{proof}

For completeness, we prove two propositions which were used in the above proof. 

\begin{proposition}\label{proposition:abelian}
Let $A$ be an abelian variety defined over an algebraically closed field. 
Then, any nef line bundle on $A$ is numerically equivalent to a semiample line bundle. 
\end{proposition}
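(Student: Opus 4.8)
The plan is to use the structure theory of line bundles on an abelian variety, reducing everything to the classical fact that a nef, nondegenerate line bundle on an abelian variety is ample. Write $\hat A = \Pic^0 A$ for the dual abelian variety and, for a line bundle $N$ on $A$, let $\phi_N \colon A \to \hat A$ be the symmetric homomorphism $x \mapsto [t_x^* N \otimes N^{-1}]$, where $t_x$ denotes translation by $x$. Recall that $N \mapsto \phi_N$ induces an injection $\operatorname{NS}(A) \hookrightarrow \Hom(A, \hat A)$, so that numerical equivalence of line bundles is detected by the associated homomorphisms, and that $N$ is nondegenerate precisely when $\phi_N$ is an isogeny.

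First I would set $B \coloneq (\ker \phi_L)^0_{\mathrm{red}}$, the reduced identity component of the kernel; this is an abelian subvariety of $A$. For $x \in B \subseteq \ker \phi_L$ one has $t_x^* L \cong L$, and restricting such an isomorphism to $B$ gives $\phi_{L|_B} = 0$; hence $L|_B \in \Pic^0(B)$ is numerically trivial. Let $\pi \colon A \to A' \coloneq A/B$ be the quotient, and denote by $0 \to \hat{A'} \xrightarrow{\hat\pi} \hat A \xrightarrow{\hat\iota} \hat B \to 0$ the dual exact sequence attached to the inclusion $\iota \colon B \hookrightarrow A$.

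The core of the argument is to descend the numerical class of $L$ along $\pi$. Since $B \subseteq \ker \phi_L = \ker(\phi_L)$, the homomorphism $\phi_L$ vanishes on $B = \ker \pi$ and therefore factors as $\phi_L = g \circ \pi$ for a unique $g \colon A' \to \hat A$. To see that $g$ lands in $\hat\pi(\hat{A'}) = \ker \hat\iota$, I would invoke the symmetry of $\phi_L$: the dual of the homomorphism $\hat\iota \circ \phi_L \colon A \to \hat B$ is $\phi_L \circ \iota$, which vanishes because $\iota(B) = B \subseteq \ker \phi_L$; as the duality functor is faithful, this forces $\hat\iota \circ \phi_L = 0$, and hence $\hat\iota \circ g = 0$ since $\pi$ is surjective. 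Thus $g = \hat\pi \circ \psi$ for a symmetric isogeny $\psi \colon A' \to \hat{A'}$ (it is an isogeny because $B$ is the whole identity component of $\ker \phi_L$). Using the identification $\operatorname{NS}(A') \cong \Hom^{\mathrm{sym}}(A', \hat{A'})$, there is a line bundle $M$ on $A'$ with $\phi_M = \psi$, and then $\phi_{\pi^* M} = \hat\pi \circ \phi_M \circ \pi = \phi_L$, so that $\pi^* M \equiv L$.

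It then remains to check that $M$ is ample. As $\pi$ is surjective and $L \equiv \pi^* M$ is nef, the bundle $M$ is nef; as $\psi = \phi_M$ is an isogeny, $M$ is nondegenerate. A nef and nondegenerate line bundle on an abelian variety is ample (by the index theorem, nefness forces the index to vanish), so $M$ is ample, hence semiample, and therefore $\pi^* M$ is semiample; since $L \equiv \pi^* M$, the proposition follows. The step I expect to be the main obstacle is precisely this descent: establishing, through the symmetry of $\phi_L$ together with the vanishing $\phi_{L|_B} = 0$, that the class of $L$ is pulled back from an ample class on $A/B$. Once the descent is in place, the verification that the descended bundle is ample is the standard nef-implies-index-zero computation.
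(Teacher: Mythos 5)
Your proof is correct, but it takes a genuinely different route from the paper's. The paper also isolates the degenerate directions via $X \coloneq K(L)^0_{\mathrm{red}} = (\ker\phi_L)^0_{\mathrm{red}}$ (using $\chi(L)=0$ and Mumford's vanishing theorem to see $\dim X>0$ when $L$ is not big), but instead of passing to the quotient it chooses a complementary abelian subvariety $Y$ with $X\times Y\to A$ an isogeny (Poincar\'e reducibility), proves $m^*L\equiv p_Y^*(L|_Y)$, handles $L|_Y$ by induction on the dimension, and descends the class using the surjectivity of $\Pic^0(A)\to\Pic^0(X\times Y)$; it never needs to exhibit an ample bundle. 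You instead descend the numerical class along the quotient $\pi\colon A\to A/B$ directly, via the symmetry of $\phi_L$, the exactness of the dual sequence $0\to\widehat{A/B}\to\hat A\to\hat B\to 0$, and the identification $\operatorname{NS}(A')\cong\Hom^{\mathrm{sym}}(A',\hat{A'})$, landing on an ample $M$ with $\pi^*M\equiv L$. Your version avoids the induction and the (non-canonical) choice of complement, and yields the sharper structural statement that a nef class on an abelian variety is the pullback of an ample class along a surjection of abelian varieties; the price is reliance on the surjectivity of $\operatorname{NS}(A')\to\Hom^{\mathrm{sym}}(A',\hat{A'})$, which over an algebraically closed field is true in all characteristics but is a genuinely nontrivial theorem in characteristic $2$ (Mumford, \emph{Abelian Varieties}, \S 20 and \S 23), whereas the paper only needs the more elementary $\Pic^0$ surjectivity for products and isogenies. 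Two small points you should expand if writing this up: justify that $\psi$ is an isogeny (from $\ker\phi_L=\pi^{-1}(\ker\psi)$ and $\dim A'=\dim A-\dim B$), and replace the one-line appeal to the index theorem by the standard chain: $\chi(M)^2=\deg\phi_M\neq 0$ gives $(M^{\dim A'})\neq 0$, nefness gives $(M^{\dim A'})>0$, hence $M$ is nef and big, some multiple is effective, and an effective line bundle with $K(M)$ finite is ample by Mumford's Application 1 in \S 6.
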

\begin{remark}
Note that any effective divisor on an abelian variety is always semiample 
(see the proof of Application 1 ((i)$\Rightarrow$(iii)) in \cite[Section 6]{Mumford:AV}). 
\end{remark}

\begin{proof}
Let $L$ be a nef line bundle on $A$. 
Define $K(L)$ to be the maximal subscheme of $A$ such that 
\[
(m^* L - p_1^* L  - p_2 ^* L )|_{K(L) \times A} = \mcO _{K(L) \times A} 
\]
as in \cite[Section 13]{Mumford:AV}, 
where $m \colon A \times A \to A$ is the multiplication map, 
and $p_i$ are the first and second projections. 

By the above remark, we may assume that $L$ is not big, so that $L^g = 0$ where $g = \dim A$. 
By the Riemann-Roch theorem \cite[Section 16]{Mumford:AV}, we have $\chi (L) = 0$. 
Hence, it follows that $\dim K(L) > 0$ by the vanishing theorem \cite[Section 16]{Mumford:AV}. 

Set $X \coloneq K(L)^0 _{\text{red}}$. This is a subabelian variety of $A$. 
Thus, there exists a subabelian variety $Y \subset A$ such that the morphism 
$m \colon X \times Y \to A; \ (x, y) \mapsto x+y$ defined by the group law
on $A$
is an isogeny (see Theorem 1 in \cite[Section 19]{Mumford:AV}). 
Note that, $L|_X \in \Pic^0(X)$, because it is invariant under translations by any element of $X$ 
(see Remark \ref{remark:pic0abelian}).

First, we prove $m^* L \equiv p_Y ^* (L|_{Y})$, where $p_Y \colon X \times Y \to Y$ is the second projection. 
By definition of $K(L)$, we get $m^*L = p_X^*(L|_X) + p_Y ^*(L|_{Y})$. 
Since $L|_X \in \Pic^0(X)$, we have $L|_{X} \equiv 0$, which proves $m^* L \equiv p_Y ^* (L|_{Y})$. 

Since $\dim Y < \dim A$, we may assume that $L|_Y$ is numerically equivalent to a semiample line bundle 
by induction on $\dim A$. 
By Proposition \ref{proposition:cover}, in order to complete the proof, 
it is sufficient to show that $p_Y ^* (L|_{Y})$ descends to $A$. 
This is true, because $\Pic^0(A) \rightarrow \Pic^0(X \times Y)$ is surjective 
(cf.~ Theorem 1 in \cite[Section 15]{Mumford:AV}).
\end{proof}
\begin{remark} \label{remark:pic0abelian}
Mumford in \cite[Section 8]{Mumford:AV} defines $\Pic^0(X)$, 
for an abelian variety $X$, to be the subgroup of $\Pic(X)$ consisting 
of line bundles invariant under translations by any element of $X$. 
The existence of the dual abelian variety and the Poincar\'e line bundle 
(cf.\ \cite[Section 13]{Mumford:AV}) shows that this definition 
is equivalent to the standard definition of $\Pic^0(X)$ as the identity component of the Picard functor.    
\end{remark}
\begin{proposition}\label{proposition:ruled}
Let $X$ be a minimal ruled surface over an elliptic curve $B$. 
Then, it follows that $\operatorname{NEF}(X) \subset \operatorname{NE}(X)$. 
\end{proposition}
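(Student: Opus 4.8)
The plan is to reduce everything to an explicit computation in the plane of numerical classes. Writing $X = \mbP(\ee)$ for a rank-two bundle $\ee$ on $B$, the surface is geometrically ruled, so $N^1(X)_{\mbR}$ is two-dimensional, generated by the class $F$ of a fibre and a normalized section $C_0$, with $F^2 = 0$, $C_0 \cdot F = 1$ and $C_0^2 = -e$; since $g(B) = 1$, Nagata's theorem gives $e \ge -1$. Because both cones are closed cones in a plane, it suffices to identify their two extremal rays and to check that each generator of $\operatorname{NEF}(X)$ is a nonnegative combination of the generators of $\operatorname{NE}(X)$. I would organize the argument by the two cases $e \ge 0$ and $e = -1$.

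First I would read off the nef cone as the closure of the ample cone. By the ampleness criterion for ruled surfaces \cite[Ch V, Section 2]{Hartshorne}, a class $aC_0 + bF$ is ample exactly when $a>0$ and $b > ae$ for $e \ge 0$, and when $a>0$ and $b > \frac{1}{2}ae$ for $e < 0$. Hence $\operatorname{NEF}(X)$ is cut out by $a \ge 0,\ b \ge ae$ (generated by $F$ and $C_0 + eF$) when $e \ge 0$, and by $a \ge 0,\ b \ge \frac{1}{2}ae$ (generated by $F$ and $2C_0 - F$) when $e = -1$. For the cone of curves, the fibre $F$ spans one ray, and the classification of irreducible curves on a ruled surface bounds every other irreducible curve $aC_0 + bF$ by the same inequalities on $b$ (with $a \ge 1$); this identifies the second extremal ray of $\operatorname{NE}(X)$ as $C_0$ when $e \ge 0$ and as the slope-$\frac{1}{2}e$ ray when $e = -1$.

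It then remains to compare. For $e \ge 0$ the effective cone is the first quadrant $\{a \ge 0,\ b \ge 0\}$, generated by the effective classes $C_0$ and $F$, and the nef generator $C_0 + eF$ has nonnegative coordinates, so it lies in $\operatorname{NE}(X)$; for $e = -1$ the two cones coincide. In either case $\operatorname{NEF}(X) \subseteq \operatorname{NE}(X)$. As a consistency check, the inclusion also holds on any projective surface for the trivial reason that the nef cone is the closure of the ample cone, ample classes are effective, and $\operatorname{NE}(X)$ is closed; but the explicit description above is what feeds the application, since it exhibits the generators of $\operatorname{NE}(X)$ as effective --- hence, over $\Fp$ and via Proposition \ref{fact::torsionpic0}, $\mbQ$-effective --- classes.

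I expect the delicate point to be the indecomposable case $e = -1$, where $C_0^2 = 1 > 0$, so $C_0$ is big and lies in the interior of $\operatorname{NE}(X)$; the genuine second extremal ray is then the less obvious class $2C_0 - F$, of self-intersection zero. Two things must be checked there, and both rely on the finer geometry: that the sharp slope bound $b \ge \frac{1}{2}ae$ is actually attained (so that this ray is extremal and not a spurious boundary), and that it is spanned by an effective divisor. The second point is the true obstacle, because $(2C_0 - F)^2 = 0$ means Riemann--Roch alone does not force effectivity; I would settle it by producing an explicit effective bisection of $X \to B$ in this class (a connected \'etale double cover of $B$), using the structure of the Atiyah bundle.
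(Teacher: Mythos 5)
Your overall architecture coincides with the paper's: reduce to the two cases $e\ge 0$ and $e=-1$ via Hartshorne V.2.15, read off $\operatorname{NEF}(X)=\Cone(F,C_0+eF)$ resp.\ $\Cone(F,2C_0-F)$ from the ampleness criterion, note that for $e\ge 0$ both nef generators are visibly effective, and isolate the effectivity of $2C_0-F$ as the entire content of the $e=-1$ case. But that last step is exactly where you stop. ``I would settle it by producing an explicit effective bisection \dots using the structure of the Atiyah bundle'' is a plan, not an argument, and it is the step to which the paper devotes essentially the whole proof: writing $X\cong\mbP_C(E)$ for the indecomposable rank-two, degree-one bundle $E$, identifying $H^0(X,\mcO_X(2C_0-p^*Q))$ with $H^0(C,S^2(E)\otimes\mcO_C(-Q))$, where $S^2(E)$ has rank and degree three, and then finding a point $Q$ making this nonzero --- via Atiyah's Lemma 11 (an indecomposable bundle with rank equal to degree on an elliptic curve contains a degree-one line subbundle) when $S^2(E)$ is indecomposable, and by a short degree count when it decomposes, a case that can genuinely occur in small characteristic. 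Note that one only obtains effectivity of $2C_0-p^*Q$ for \emph{some} $Q$, which suffices since only the numerical ray matters. Were you to carry out the \'etale-double-cover construction instead, you would still need to produce the cover inside $X$ and verify that the resulting bisection lies in the class $2C_0-F$ rather than $2C_0$ or $2C_0+F$; that is work comparable to the paper's computation, so as it stands the crux of the proposition is unproved.

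A secondary but real error: your ``consistency check'' is false. In this proposition $\operatorname{NE}(X)$ must mean the cone generated by effective classes (the application needs nef $\Rightarrow$ $\mbQ$-effective via Proposition \ref{fact::torsionpic0}), and that cone is \emph{not} closed on a general projective surface; for instance $\mbP(\ee)$ for a general stable degree-zero bundle $\ee$ on a curve of genus at least two carries the nef class $C_0$ with $h^0(mC_0)=h^0(S^m\ee)=0$ for all $m>0$. If the inclusion were ``trivial on any projective surface'' the proposition would have no content. You do correctly flag that exhibiting the nef generators as effective is the real point, but the remark as written should be deleted.
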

\begin{proof}
We refer the reader to \cite[Ch V, Section 2]{Hartshorne} 
for the properties of ruled surfaces.
Let $C_0 \subset X$ be a normalized section and $F$ a  fiber of $X \to B$. 
Set $e \coloneq - C_0 ^2$. 
When $e \ge 0$, we get
\[
\operatorname{NEF}(X) = \Cone (F, C_0 + e F), 
\]
and so nef line bundles are effective. 

In what follows, we may assume $e = -1$ 
by \cite[Ch V, Theorem 2.15]{Hartshorne}. 
We know that
\[
\operatorname{NEF}(X) = \NE(X)= \Cone (F, 2C_0 - F)
\]
by \cite[Ch V, Proposition 2.21]{Hartshorne}. 
Further, there exists a rank two indecomposable vector bundle $E$ of degree one on $C$ such that
$X \cong \mbP _C (E)$ holds. We denote by $p \colon \mbP _C (E) \to C$ the projection. 
It is sufficient to show that $H^0 (X, \mcO _X (2C_0 - p^* Q)) \not = 0$ for some point $Q \in C$, 
because then $\NE(X) = \operatorname{NE}(X)$. 
Note that 
\[
H^0 (X, \mcO _X (2C_0 - p^* Q)) \cong H^0 (C, S^2(E) \otimes \mcO _C (- Q))
\]
and $S^2(E)$ has both rank and degree equal to three 
(cf.\ \cite[Ch II, Ex 5.16]{Hartshorne} and 
the proof of \cite[Ch V, Theorem 2.15]{Hartshorne}). 
When $S^2(E)$ is indecomposable, we can complete the proof by using the following proposition from Atiyah. 

\begin{proposition}[Atiyah, {\cite[Lemma 11]{Atiyah}}]
Let $F$ be an indecomposable vector bundle of rank $r$ and degree $d$ on an elliptic curve. 
If $r = d$, then $F$ contains a degree one line bundle as a subbundle. 
\end{proposition}

\noindent
When $S^2(E)$ is decomposable, 
it can be written as $S^2(E) \cong E_1 \oplus E_2$, 
where $E_1$ is a line bundle and $E_2$ is a vector bundle of rank two. 
If $\deg E_1 \ge 1$, then 
\[
H^0 (C, S^2(E) \otimes \mcO _C (- Q)) \supset H^0 (C, E_1 \otimes \mcO _C (- Q)) \not = 0
\]
for some point $Q \in C$, which finishes the proof in this case. 
If $\deg E_1 < 1 $, then $\deg E_2 \ge 3$, and so $\deg (E_2 \otimes \mcO _C (- Q)) \ge 1$ for any point $Q \in C$. 
Therefore,
\[
H^0 (C, S^2(E) \otimes \mcO _C (- Q)) \supset H^0 (C, E_2 \otimes \mcO _C (- Q)) \not = 0
\]
by the Riemann-Roch theorem. 
\end{proof}

\section{Reduction to surfaces}\label{section:reduction}

The first step in the proof of Theorem \ref{theorem:main} is to reduce the problem to the case of surfaces.
\begin{theorem} \label{theorem:reduction}
Let $(X, \Delta)$ be a three-dimensional projective log pair defined over $\Fp$, and $L$ a line bundle on $X$. 
If we assume that
\begin{itemize}
\item $L$ and $L - (K_X + \Delta)$ are nef and big, 
\item $L|_{ \Supp \lfloor \Delta \rfloor}$ is semiample, 
\end{itemize}
then $L$ is semiample. 
\end{theorem}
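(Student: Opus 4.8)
The plan is to run the strategy of Keel's proof of \cite[Theorem 0.5]{Keel}: use Theorem \ref{theorem:basereduction} to cut the three-dimensional problem down to a (reducible) surface, then invoke the surface results of Section \ref{section:surface} together with Keel's gluing theorems. First I would apply Kodaira's lemma to the nef and big divisor $L-(K_X+\Delta)$ to write $L-(K_X+\Delta)\sim_{\mbQ}A+E'$ with $A$ an ample $\mbQ$-divisor and $E'\ge 0$. Since $L$ is itself big, $L-t(\lfloor\Delta\rfloor+E')$ is big for $0<t\ll 1$, so writing this big divisor as (ample)\,$+$\,(effective) and rearranging yields an effective $\mbQ$-Cartier divisor whose reduced support contains $\Supp\lfloor\Delta\rfloor\cup\Supp E'$ and whose complement in $L$ is ample. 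Clearing denominators gives an effective Cartier divisor $E$ with $L-E$ ample and $E_{\text{red}}\supseteq\Supp\lfloor\Delta\rfloor\cup\Supp E'$. By Theorem \ref{theorem:basereduction}, $L$ is semiample if and only if $L|_{E_{\text{red}}}$ is semiample, so the entire problem is moved onto the surface $E_{\text{red}}$.

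Next I would establish semiampleness of $L$ on each irreducible component $Z$ of $E_{\text{red}}$, working on its normalization $\bar Z$. If $Z\subseteq\Supp\lfloor\Delta\rfloor$, then $L|_{\bar Z}$ is a restriction of the semiample bundle $L|_{\Supp\lfloor\Delta\rfloor}$ and is semiample by hypothesis. If $L|_Z$ is big, then $L|_{\bar Z}$ is nef and big, hence semiample by Theorem \ref{theorem:big}. In the remaining case, $Z\not\subseteq\Supp\lfloor\Delta\rfloor$ with $L|_Z$ not big, I would put $Z$ into the boundary: raising the coefficient of $Z$ to one and using the adjunction formula of Section \ref{subsection:adjunction}, I obtain an effective $\mbQ$-divisor $\Delta_{\bar Z}$ with $K_{\bar Z}+\Delta_{\bar Z}=(K_X+\Gamma)|_{\bar Z}$ for the modified boundary $\Gamma$, so that $L|_{\bar Z}-(K_{\bar Z}+\Delta_{\bar Z})$ is the restriction of a nef class. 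Theorem \ref{theorem:genshokurov}, which requires no bigness, then gives that $L|_{\bar Z}$ is semiample.

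Finally I would glue these pieces into semiampleness of $L|_{E_{\text{red}}}$. On the normalization $\overline{E_{\text{red}}}=\bigsqcup\bar Z$ the bundle $L|_{\overline{E_{\text{red}}}}$ is semiample by the previous step, and $L$ restricted to the reduced conductor $D\subset E_{\text{red}}$, being nef on a curve over $\Fp$, is semiample by Proposition \ref{fact::torsionpic0}. Theorem \ref{theorem:gluing2} then yields that $L|_{E_{\text{red}}}$ is semiample, provided all but finitely many fibres of the map associated to $L|_{\overline{E_{\text{red}}}}$ are geometrically connected over the conductor $C\subset\overline{E_{\text{red}}}$; the $\Fp$-specific Lemmas \ref{lemma:0dim}--\ref{lemma:fin_group} and the conductor bookkeeping of Proposition \ref{proposition:adjunction} are the tools I would use to verify this.

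I expect the real difficulty to lie in the last two steps and their interaction. On each component $Z\not\subseteq\Supp\lfloor\Delta\rfloor$ one must guarantee both that the adjunction residue $\Delta_{\bar Z}$ is effective and that $L|_{\bar Z}-(K_{\bar Z}+\Delta_{\bar Z})$ is genuinely nef; raising the coefficient of $Z$ to one introduces a self-intersection term $-(1-\coeff_Z\Delta)\,Z|_{\bar Z}$ that must be controlled, typically by arranging that the component already carries enough of $E'$ (so that $\coeff_Z(\Delta+E')\ge 1$) or, failing that, that $L|_Z$ is big and Theorem \ref{theorem:big} applies instead. The second hard point is checking the geometric-connectedness hypothesis of Theorem \ref{theorem:gluing2}; this is exactly where working over $\Fp$ is indispensable, and where I would expect to concentrate most of the effort.
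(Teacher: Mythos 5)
Your skeleton is the paper's: reduce to $E_{\text{red}}$ via Theorem \ref{theorem:basereduction}, prove semiampleness on the normalization of each component by adjunction plus Theorem \ref{theorem:genshokurov}, then glue with Theorems \ref{theorem:gluing1}--\ref{theorem:gluing2}. But the two points you defer to the end are exactly where the proof lives, and your proposed resolutions do not work. For a component $Z\not\subset\Supp\lfloor\Delta\rfloor$ with $L|_Z$ not big, raising $\coeff_Z$ to one costs you $(1-\coeff_Z\Delta)Z$ of nefness, and neither of your escape routes is available: there is no reason $\coeff_Z(\Delta+E')\ge 1$ for the $E'$ coming from $L-(K_X+\Delta)$ (that coefficient can be zero), and no reason $L|_Z$ should be big. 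The paper's fix is to scale \emph{both} sides simultaneously using the decomposition of $L$ itself, $L\sim_{\mbQ}A+E$: choose $\lambda_i>0$ with $\Delta+\lambda_i E=E_i+\Gamma_i$ and $E_i\not\subset\Supp\Gamma_i$ (possible because every component of $E_{\text{red}}$ outside $\Supp\lfloor\Delta\rfloor$ has positive coefficient in $E$), and work with $(1+\lambda_i)L$ rather than $L$. Then
\[
(1+\lambda_i)L-(K_X+\Delta+\lambda_i E)=\bigl(L-(K_X+\Delta)\bigr)+\lambda_i(L-E)\sim_{\mbQ}\bigl(L-(K_X+\Delta)\bigr)+\lambda_i A
\]
is ample, so its restriction to $\overline{E_i}$ is nef and Theorem \ref{theorem:genshokurov} applies (Remark \ref{remark:qcartier} handles the $\mbQ$-divisor $(1+\lambda_i)L$). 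Without this device your second step has no proof.

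The gluing step has a parallel gap. Applying Theorem \ref{theorem:gluing2} to all of $E_{\text{red}}$ at once requires controlling the fibres of the associated map over the \emph{entire} conductor, which includes every pairwise intersection $Z_i\cap Z_j$. The fibre count $\#(F\cap C)\le F\cdot\Delta_{\overline{E_i}}<2$ only sees the part of the conductor lying in $\Supp\lfloor\Delta_{\overline{E_i}}\rfloor$, and Proposition \ref{proposition:adjunction}(2) puts $Z_i\cap Z_j$ there only when $Z_j$ appears with coefficient $\ge 1$ in the boundary used for adjunction on $Z_i$ --- which your single modified boundary $\Gamma$ does not arrange. The paper instead orders $\lambda_1\le\cdots\le\lambda_m$, so that $T+\sum_{j<i}E_j\le\Gamma_i$, and glues one component at a time: Theorem \ref{theorem:gluing2} for $\overline{E_i}\to E_i$, then Theorem \ref{theorem:gluing1} for $U_{i-1}\cup E_i$, with the ordering guaranteeing that $U_{i-1}\cap E_i$ pulls back into $\Supp\lfloor\Delta_{\overline{E_i}}\rfloor$ and the ampleness of $(1+\lambda_i)L-(K_X+E_i+\Gamma_i)$ giving $F\cdot\Delta_{\overline{E_i}}<2$. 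Both the scaling trick and the induction with the ordered $\lambda_i$ are missing from your proposal, and they are the essential content of the argument.
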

Here, we adopt the convention that, when $\lfloor \Delta \rfloor =0$, then $L|_{ \Supp \lfloor \Delta \rfloor}$ is automatically semiample. 
\begin{remark}
Under the assumption $\lfloor \Delta \rfloor =0$, Theorem \ref{theorem:reduction}
was proved by Keel \cite[Theorem 0.5]{Keel}. 
\end{remark}
\begin{proof}[Proof of Theorem \ref{theorem:main}]
Set $S \coloneq \lfloor \Delta \rfloor$. 
Since $L$ is a big line bundle, we can decompose it as $L \sim_{\mathbb{Q}} A + E$, 
where $A$ is an ample and $E$ is an effective $\mbQ$-Cartier $\mbQ$-divisor. By Theorem \ref{theorem:basereduction} 
it is enough to show that $L|_{E_{\text{red}}}$ is semiample.

We write $E_{\text{red}} = T + \sum_{i=1}^m E_i$, 
where $\Supp (T) \subset \Supp(S)$ and $E_i$ are prime divisors not contained in $\Supp(S)$.
Define $\lambda_i \in \mathbb{Q}$ so that $\Delta + \lambda_i E$ contains $E_i$ with coefficient one. 
Then by definition of $\lambda_i$, there exists 
an effective $\mathbb{Q}$-divisor $\Gamma_i$ such that 
\[
\Delta + \lambda_i E = E_i + \Gamma_i
\]
and $E_i \not \subset \Supp(\Gamma_i)$. 
Since $E_i \not \subset \Supp(S)$, 
it follows that $\lambda_i > 0$.
By rearranging indices, we may assume without loss of generality that
\[
\lambda_1 \leq \lambda_2 \leq \ldots \leq \lambda_m,
\]
so we have 
\[
T + \sum_{1 \le j \le i-1} E_j \leq \Gamma_i 
\]
for each $i$. 

We define $U_0 \coloneq \Supp(T)$ and $U_i \coloneq U_{i-1} \cup E_i$ for $i>0$. 
Recall that it is sufficient to show that $L$ restricted to $U_m = \Supp(E_{\text{red}})$ is semiample. 
We prove it by induction on $i$.

Observe that $L|_{U_0}$ is semiample, 
because $U_0 = \Supp(T) \subset \Supp(S)$ and $L|_{S}$ is semiample by hypothesis. 
Let us assume that $L|_{U_{i-1}}$ is semiample. 
In order to prove the semiampleness of $L|_{U_i}$, we first prove the semiampleness of $L|_{E_i}$.

We consider the normalization $p_i\colon \overline{E_i} \to E_i$.
By adjunction (see Subsection \ref{subsection:adjunction}), 
there exists an effective $\mbQ$-divisor $\Delta _{\overline{E_i}}$ such that 
\[
(K_X + E_i + \Gamma _i)|_{\overline{E_i}} \sim K_{\overline{E_i}} + \Delta _{\overline{E_i}}
\]
holds. 
\begin{lemma}\label{lemma:semiample_normal}
$L|_{\overline{E_i}}$ is semiample.
\end{lemma}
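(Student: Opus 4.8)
The plan is to deduce the lemma from Tanaka's surface theorem, Theorem~\ref{theorem:tanaka_bpf}, applied to the normal projective surface $\overline{E_i}$ over $\Fp$ with the nef divisor $D \coloneq L|_{\overline{E_i}}$. The divisor $D$ is indeed nef, since $L$ is nef on $X$ and the composite $\overline{E_i} \to E_i \hookrightarrow X$ is finite; thus it only remains to produce a positive rational number $q$ for which $qD - K_{\overline{E_i}}$ is $\QQ$-effective.

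First I would eliminate the canonical divisor using the adjunction formula recorded just above the lemma, namely $K_{\overline{E_i}} + \Delta_{\overline{E_i}} \sim_{\QQ} (K_X + E_i + \Gamma_i)|_{\overline{E_i}}$, where $\Delta_{\overline{E_i}}$ is effective and $E_i + \Gamma_i = \Delta + \lambda_i E$. Since $\Delta_{\overline{E_i}} \ge 0$, it is enough to show that $qD - (K_{\overline{E_i}} + \Delta_{\overline{E_i}})$, which is $\QQ$-linearly equivalent to the restriction to $\overline{E_i}$ of $qL - (K_X + \Delta + \lambda_i E)$, is $\QQ$-effective for a suitable $q$. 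The crucial manipulation is to convert the effective divisor $\lambda_i E$ — which contains $E_i$ and so cannot be restricted to $\overline{E_i}$ directly — into an ample class: writing $M \coloneq L - (K_X + \Delta)$ and using $E \sim_{\QQ} L - A$ from the decomposition $L \sim_{\QQ} A + E$, one computes
\[
qL - (K_X + \Delta + \lambda_i E) \sim_{\QQ} (q - \lambda_i - 1)L + M + \lambda_i A.
\]
For every rational $q \ge \lambda_i + 1$ the summands on the right are nef, nef, and ample respectively (recall $\lambda_i > 0$ and $M$ is nef and big), so the class is nef and big on $X$.

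Next I would restrict this expression to $\overline{E_i}$ term by term along the finite morphism $\overline{E_i} \to X$. The restrictions of $(q - \lambda_i - 1)L$ and of $M$ remain nef, and the restriction of $\lambda_i A$ remains ample because the pullback of an ample divisor under a finite morphism is ample. Consequently the restricted class is the sum of an ample and a nef divisor, hence nef and big on the surface $\overline{E_i}$; being big it is $\QQ$-effective. Adding back the effective divisor $\Delta_{\overline{E_i}}$ shows that $qD - K_{\overline{E_i}}$ is $\QQ$-effective, and Theorem~\ref{theorem:tanaka_bpf} then yields the semiampleness of $D = L|_{\overline{E_i}}$.

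The point that requires care is precisely the replacement of $\lambda_i E$ by $\lambda_i(L - A)$: this is what avoids restricting an effective divisor to a component of its own support, and it is the reason $\lambda_i$ was chosen so that $E_i$ occurs with coefficient one in $\Delta + \lambda_i E$, so that its contribution is absorbed into $K_{\overline{E_i}} + \Delta_{\overline{E_i}}$ by adjunction rather than into a divisor we would have to restrict. The persistence of ampleness under finite pullback is what upgrades the restricted class from merely nef to big; this is essential, since one cannot apply the surface base point free result in the form of Theorem~\ref{theorem:genshokurov} here (the difference $D - (K_{\overline{E_i}} + \Delta_{\overline{E_i}})$ need not be nef), and only the freedom to take a large coefficient $q$ in Theorem~\ref{theorem:tanaka_bpf} makes the argument go through.
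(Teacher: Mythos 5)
Your argument is correct, and its core computation is exactly the paper's: the paper sets $D_i \coloneq (1+\lambda_i)L - (K_X + \Delta + \lambda_i E)$ and rewrites it as $L - (K_X+\Delta) + \lambda_i(L-E) \sim_{\QQ} (L-(K_X+\Delta)) + \lambda_i A$, which is your identity specialized to $q = 1+\lambda_i$; both arguments then absorb the coefficient-one component $E_i$ into $K_{\overline{E_i}} + \Delta_{\overline{E_i}}$ by adjunction, precisely as you emphasize. Where you diverge is the endgame. The paper only records that $D_i|_{\overline{E_i}} = (1+\lambda_i)L|_{\overline{E_i}} - (K_{\overline{E_i}} + \Delta_{\overline{E_i}})$ is nef and invokes Theorem \ref{theorem:genshokurov} (via Remark \ref{remark:qcartier}) for the nef $\QQ$-Cartier divisor $(1+\lambda_i)L|_{\overline{E_i}}$; you instead exploit that $D_i$ is actually ample, so its pullback under the finite map $\overline{E_i} \to X$ is ample, hence big, hence $\QQ$-effective, and after adding back $\Delta_{\overline{E_i}} \ge 0$ you feed $qD - K_{\overline{E_i}}$ directly into Tanaka's Theorem \ref{theorem:tanaka_bpf}. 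Your route is leaner for this particular lemma, since it bypasses the surface-classification case analysis behind Theorem \ref{theorem:genshokurov} (which the paper still needs elsewhere, e.g.\ in Theorem \ref{theorem:2dim}). One small correction to your closing remark: it is not true that Theorem \ref{theorem:genshokurov} cannot be applied here --- the paper applies it not to $L|_{\overline{E_i}}$ but to the rescaled divisor $(1+\lambda_i)L|_{\overline{E_i}}$, for which the required difference is nef; this is exactly the purpose of Remark \ref{remark:qcartier}.
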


\begin{proof}
We define auxiliary divisors $D_i$ as follows:
\[
	D_i \coloneq (1 + \lambda_i)L - (K_X + \Delta + \lambda_i E).  
\]
Observe that
\begin{align*}
	D_i & =L - (K_X + \Delta) + \lambda_i (L - E) \\
		&\sim_{\mathbb{Q}} (L - (K_X + \Delta)) + \lambda_i A,
\end{align*}
and so $D_i$ is ample, because $L - (K_X + \Delta)$ is nef and $\lambda_i A$ is ample. 
Hence,
\[
D_i |_{\overline{E_i}} = (1 + \lambda_i)L| _{\overline{E_i}} - (K_{\overline{E_i}} + \Delta_{\overline{E_i}})
\]
is nef. Since $(1+\lambda_i)L|_{\overline{E_i}}$ is also nef, 
the semiampleness of $L|_{\overline{E_i}}$ follows from Theorem \ref{theorem:genshokurov} and Remark \ref{remark:qcartier}.
\end{proof}

Assume $\kappa(L|_{\overline{E_i}})$ is equal to $0$ or $2$. 
Then the assumptions of Theorem \ref{theorem:gluing2} are satisfied, and so $L|_{E_i}$ is semiample. 
Using Theorem \ref{theorem:gluing1} for $X_1 = U_{i-1}$ and $X_2 = E_i$, we get that $L|_{U_i}$ is semiample. 

In what follows, we assume $\kappa(L|_{\overline{E_i}}) = 1$.
\begin{lemma}\label{lemma:FDelta<2}
Let $\pi _i \colon \overline{E_i} \to Z_i$ be 
the map associated to the semiample line bundle $L|_{\overline{E_i}}$ and 
let $F$ be a general fiber of $\pi _i$. 
Further, let $C_i \subset \overline{E_i}$ be the the reduction of the conductor of the normalization 
$p_i \colon \overline{E_i} \to E_i$.
Then $F$ and $C_i$ intersect in at most one point.  
\end{lemma}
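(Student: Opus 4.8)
The plan is to reduce the geometric statement to the numerical inequality $C_i \cdot F \le 1$. This suffices: $C_i$ is a reduced curve and, for general $F$, the fibre $F$ is not among the finitely many fibres contained in $\Supp(C_i)$, so the number of points of $F \cap C_i$ is at most the intersection number $C_i \cdot F$. Since $\kappa(L|_{\overline{E_i}}) = 1$, the base $Z_i$ of $\pi_i$ is a curve and a general fibre $F$ satisfies $L|_{\overline{E_i}} \cdot F = 0$ and $F^2 = 0$; being general, $F$ avoids the finitely many singular points of the normal surface $\overline{E_i}$, so I may run all the intersection-theoretic computations in the smooth locus.

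First I would use the ampleness of $D_i$ proved in Lemma \ref{lemma:semiample_normal}. Its restriction to $\overline{E_i}$ is $D_i|_{\overline{E_i}} = (1+\lambda_i)L|_{\overline{E_i}} - (K_{\overline{E_i}} + \Delta_{\overline{E_i}})$, so intersecting with $F$ and using $L|_{\overline{E_i}} \cdot F = 0$ gives $(K_{\overline{E_i}} + \Delta_{\overline{E_i}}) \cdot F = -\,D_i|_{\overline{E_i}} \cdot F$. As $D_i$ is ample, its pullback $D_i|_{\overline{E_i}}$ is ample, hence $D_i|_{\overline{E_i}} \cdot F > 0$ and therefore $(K_{\overline{E_i}} + \Delta_{\overline{E_i}}) \cdot F < 0$. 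Because $\Delta_{\overline{E_i}}$ is effective and $F$ is nef, this forces $K_{\overline{E_i}} \cdot F < 0$. By adjunction on the smooth locus $K_{\overline{E_i}} \cdot F = 2p_a(F) - 2$, and since $p_a(F) \ge 0$ I conclude $p_a(F) = 0$ and $K_{\overline{E_i}} \cdot F = -2$; in particular $\Delta_{\overline{E_i}} \cdot F < -K_{\overline{E_i}} \cdot F = 2$.

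Next I would compare $C_i$ with $\Delta_{\overline{E_i}}$. The divisor $\Delta_{\overline{E_i}}$ is obtained by adjunction from the pair $(X, E_i + \Gamma_i)$, in which $E_i$ is a prime divisor of coefficient one, so $C_i$ is exactly the codimension-one part of the conductor considered in Proposition \ref{proposition:adjunction}. Part (1) of that proposition gives $\Supp(C_i) \subset \Supp(\lfloor \Delta_{\overline{E_i}} \rfloor)$, so every component of the reduced divisor $C_i$ occurs in $\Delta_{\overline{E_i}}$ with coefficient at least one; thus $\Delta_{\overline{E_i}} - C_i$ is effective. Since $F$ is nef, this yields $C_i \cdot F \le \Delta_{\overline{E_i}} \cdot F < 2$, and as $C_i \cdot F$ is a non-negative integer we obtain $C_i \cdot F \le 1$. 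Hence $F$ meets $C_i$ in at most one point.

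I expect the main obstacle to be the passage to the strict inequality $(K_{\overline{E_i}} + \Delta_{\overline{E_i}}) \cdot F < 0$ together with the characteristic-$p$ bookkeeping for the general fibre, namely checking that $F$ lies in the smooth locus and that $p_a(F) \ge 0$. The latter is precisely what rules out a quasi-elliptic general fibre (where $p_a(F) = 1$ and $K_{\overline{E_i}} \cdot F = 0$ would contradict $K_{\overline{E_i}} \cdot F < 0$). Once these points are in place, the statement is a one-line intersection computation.
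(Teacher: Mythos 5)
Your argument is correct and follows essentially the same route as the paper: use the ampleness of $D_i$ and $F\cdot L|_{\overline{E_i}}=0$ to get $(K_{\overline{E_i}}+\Delta_{\overline{E_i}})\cdot F<0$, deduce $\Delta_{\overline{E_i}}\cdot F<2$ from adjunction on the fibre, and then bound $\#(F\cap C_i)$ via Proposition \ref{proposition:adjunction}\,(1), which places the one-dimensional part of the conductor inside $\Supp(\lfloor\Delta_{\overline{E_i}}\rfloor)$. The extra observations you make (the general fibre lies in the smooth locus, $p_a(F)=0$) are harmless refinements of the same computation.
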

\begin{proof}
Let $D_i$ be the $\mbQ$-divisor on $\overline{E_i}$ as in the proof of Lemma \ref{lemma:semiample_normal}. 
Then, $D_i$ is ample, so we have $F \cdot D_i|_{\overline{E_i}} > 0$. 
Since $F \cdot L|_{\overline{E_i}}=0$, 
we get
\[
F \cdot K_{\overline{E_i}} + F \cdot \Delta_{\overline{E_i}} < 0.
\] 
Hence, 
\[
F \cdot \Delta_{\overline{E_i}} < -F \cdot K_{\overline{E_i}} 
= 2 - 2 h^1 (F, \mcO_F) \leq 2
\]
holds.

By the adjunction formula (Proposition \ref{proposition:adjunction}), 
the one-dimensional part of $C_i$ is contained in $\Supp (\lfloor \Delta_{\overline{E_i}} \rfloor)$. 
Hence, we get $\# (F \cap C_i) \leq F \cdot \Delta_{\overline{E_i}} < 2$. 
\end{proof}

By this lemma, the assumptions of Theorem \ref{theorem:gluing2} are satisfied, and so $L|_{E_i}$ is semiample. 
Let $\rho _i \colon E_i \to Z_i'$ be the map associated to $L|_{E_i}$ 
and let $G$ be a general fiber of $\rho _i$. 
Since $\pi _i$ is the Stein factorization of $\rho _i \circ p_i$, 
there exists a finite map $Z_i \to Z_i '$ such that the following diagram commutes 
(cf.\ \cite[Definition-Lemma 1.0 (4)]{Keel}). 
\[\xymatrix{
\overline{E_i} \ar[d] _{\pi _i} \ar[r] ^{p_i} & E_i \ar[d] ^{\rho _i}  \\
Z_i \ar[r]  & Z_i '
}\]

We want to apply Theorem \ref{theorem:gluing1} to $X_1 = U_{i-1}$ and 
$X_2 = E_i$ to show that $L|_{U_i}$ is semiample. 
It is sufficient to prove that $G$ intersects $U_{i-1}\cap E_i$ in at most one point. 

Recall that
\[
T + \sum _{1 \le j \le i-1} E_j \le \Gamma _i, \ \ \; 
U_{i-1} = \Supp \Big( T + \sum _{1 \le j \le i-1} E_j \Big).
\]
Hence, the one-dimensional part of 
$p_i^{-1}(U_{i-1} \cap E_i)$ is contained in $\Supp(\lfloor \Delta_{\overline{E_i}} \rfloor)$ 
by the adjunction formula (Proposition \ref{proposition:adjunction}). 
By the proof of Lemma \ref{lemma:FDelta<2}, we can conclude 
\begin{align*}
\# \big((U_{i-1} \cap E_i) \cap G \big) 
&= \# \bigg( p_i \big(p_i ^{-1}(U_{i-1} \cap E_i) \cap F \big) \bigg) \\
&\leq \# \big(p_i ^{-1}(U_{i-1} \cap E_i) \cap F \big) \\
&\leq F \cdot \Delta_{\overline{E_i}} < 2, 
\end{align*}
which completes the proof. 
\end{proof}

\section{Semiampleness on non irreducible surfaces}\label{section:glue}

In this section, we prove Theorem \ref{theorem:2dim}. 
Before stating it, we need to introduce some notation. 
Let $S$ be a pure two-dimensional reduced projective scheme over $\Fp$ and 
let $S=\bigcup_{i=1}^n S_i$ be its irreducible decomposition, and $\overline{S} \to S$ its normalization. 
Let $\mcD \subset S$ and $\mcC \subset \overline{S}$ be the conductors of $S$. 
Let $\overline{C} \xrightarrow{\text{norm.}} \mcC _{\text{red}} \longrightarrow \mcC$ and 
$\overline{D} \xrightarrow{\text{norm.}} \mcD _{\text{red}} \longrightarrow \mcD$ 
be the compositions of the reduction map and the normalization. 
Then we have a natural morphism $f \colon \overline{C} \to \overline{D}$ 
such that the following diagram commutes. 
\[\xymatrix{
\overline{C} \ar[d] _{f} \ar[r] ^{\text{norm.}\ } & \mcC _{\text{red}} \ar[r] & \mcC \ar@<-0.5mm>@{^(->}[r] \ar[d] & \overline{S} \ar[d] \\
\overline{D} \ar[r] ^{\text{norm.}\ } & \mcD _{\text{red}} \ar[r] & \mcD \ar@<-0.5mm>@{^(->}[r] & S
}\]

Consider the one-dimensional part $\overline{C}^{(1)}$ of $\overline{C}$ and the restriction 
$f \colon \overline{C}^{(1)} \to \overline{D}$. 
We say that \textit{$S$ satisfies the condition} ($\star$) 
when the restriction of $f$ to any one-dimensional connected component of $\overline{C}$ 
is an isomorphism onto its image. 
Further, we say that $S$ \textit{satisfies the condition} ($\star \star$) when 
any fiber of the restriction $f \colon \overline{C}^{(1)} \to \overline{D}$ 
has length at most two.

\begin{remark}\label{remark:star}
If each $S_i$ is normal, then $S$ satisfies the condition $(\star)$. 
If $S$ is regular or nodal in codimension one, 
then $S$ satisfies the condition $(\star \star)$. 
See the proof of Theorem \ref{theorem:main}. 
\end{remark}

\begin{theorem}\label{theorem:2dim}
Let $S$ be a pure two-dimensional reduced projective scheme over $\Fp$ and 
let $S=\bigcup_{i=1}^n S_i$ be its irreducible decomposition. 
Let $L$ be a nef line bundle on $S$. 
Suppose that $S$ satisfies 
the condition $(\star)$ or $(\star \star)$
defined above and that there exists an effective 
$\mbQ$-divisor $\Delta_{\overline{S}}$ 
on the normalization $\overline{S}$ of $S$ such that 
\begin{itemize}
\item $L|_{\overline{S}} -(K_{\overline{S}} + \Delta_{\overline{S}})$ is nef, and
\item $\Supp(\mcC^{(1)})$ is contained in $\Supp (\lfloor \Delta _{\overline{S}} \rfloor)$, 
where $\mcC^{(1)} \subset \overline{S}$ is the one-dimensional part of the conductor scheme of the normalization of $S$. 
\end{itemize}
Then $L$ is semiample. 
\end{theorem}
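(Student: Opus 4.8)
The plan is to first reduce to the semiampleness of $L|_{\overline{S}}$ and then glue along the conductor, disposing of the extremal Kodaira dimensions with Keel's gluing theorem and treating the middle case by hand. Writing $\overline{S} = \bigsqcup_i \overline{S_i}$ and $\Delta_i \coloneq \Delta_{\overline{S}}|_{\overline{S_i}}$, the hypotheses give that $L|_{\overline{S_i}}$ and $L|_{\overline{S_i}} - (K_{\overline{S_i}} + \Delta_i)$ are nef, so each $L|_{\overline{S_i}}$ is semiample by Theorem \ref{theorem:genshokurov}, and hence $L|_{\overline{S}}$ is semiample. Let $g \colon \overline{S} \to Z$ be the associated map and put $d \coloneq \kappa(L|_{\overline{S}}) = \dim Z$. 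I also record that $L|_D$ is semiample, where $D \coloneq \mcD_{\mathrm{red}}$, since $D$ is a reduced projective curve over $\Fp$ and a nef line bundle on such a curve is semiample (by Proposition \ref{fact::torsionpic0} together with the descent Lemma \ref{lemma:descent}). With these two facts I want to invoke the gluing Theorem \ref{theorem:gluing2}, applied with $C \coloneq \mcC_{\mathrm{red}}$.

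When $d = 2$ the map $g$ is birational onto its image, hence generically injective on $C$, so a general fiber of $g|_C$ is a single point and is geometrically connected; when $d = 0$ the scheme $Z$ is a point and $g|_C$ has a single fiber, so the condition ``all but finitely many fibers are geometrically connected'' holds for a trivial reason. In either case Theorem \ref{theorem:gluing2} applies directly and yields that $L$ is semiample. The essential case is $d = 1$: here $g$ is a fibration onto a smooth curve $Z$ with $L|_{\overline{S}} \sim_{\mbQ} g^* H$ for some ample $H$ on $Z$, and a general fiber $F$ satisfies $L \cdot F = 0$. I decompose $C$ into its $g$-vertical part, which is contained in finitely many fibers and so is irrelevant for the ``all but finitely many'' criterion, and its $g$-horizontal part $C^h$. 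The difficulty is that $C^h$ may meet the general fiber $F$ in more than one point, so the general fiber of $g|_C$ is disconnected and Theorem \ref{theorem:gluing2} does not apply as it stands.

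To circumvent this I would descend sections directly along the conductor. By Remark \ref{remark:conductor}, a section $s = g^* t$ with $t \in H^0(Z, mH)$ descends to $S$ if and only if $s|_{\mcC}$ descends to $\mcD$, that is, if and only if the pullback of $t$ to the normalized conductor $\overline{C}^{(1)}$ is compatible with the gluing $f \colon \overline{C}^{(1)} \to \overline{D}$. The conditions $(\star)$ and $(\star\star)$ pin down this gluing: each one-dimensional component of $\overline{C}^{(1)}$ maps isomorphically onto its image (under $(\star)$), or every fiber of $f$ has length at most two (under $(\star\star)$), so generically the horizontal conductor is glued in pairs by isomorphisms $\psi \colon \Gamma \xrightarrow{\ \sim\ } \Gamma'$. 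For such a glued pair the two finite maps $a \coloneq g|_{\Gamma}$ and $b \coloneq g|_{\Gamma'} \circ \psi$ from $\Gamma$ to $Z$ a priori differ, and this discrepancy is exactly the obstruction to descent. Comparing $a$ and $b$ after Stein factorization produces automorphisms of smooth proper curves over $\Fp$; the group they generate is finitely generated, hence finite by Lemma \ref{lemma:fin_group}. Replacing $L$ by a sufficiently divisible power and using Lemma \ref{lemma:0dim} and Lemma \ref{lemma:descent} to match the induced sections on the zero- and one-dimensional parts of the conductor up to powers, I can arrange that $a^*(t)$ and $b^*(t)$ agree, so that $s$ descends to $S$. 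Producing enough such descending sections shows that a power of $L$ is base point free on $S$, i.e. $L$ is semiample.

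The main obstacle is precisely the case $d = 1$: the gluing isomorphism of the horizontal conductor need not respect the fibration $g$, and over an arbitrary field there is no geometric reason for the comparison maps $a, b \colon \Gamma \to Z$ to be reconciled. It is only the special arithmetic of $\Fp$ --- the finiteness of finitely generated subgroups of $\Aut$ of curves (Lemma \ref{lemma:fin_group}) and the ``equality up to $p$-power'' phenomena of Lemmas \ref{lemma:0dim} and \ref{lemma:descent}, both ultimately resting on the torsionness of $\Pic^0$ (Proposition \ref{fact::torsionpic0}) --- that forces the required compatibility after passing to a power. This is consistent with the failure of the theorem over every other field recorded in Remark \ref{remark:counterexample}.
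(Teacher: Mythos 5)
Your overall strategy matches the paper's: normalize, reduce to descending sections along the conductor via Remark \ref{remark:conductor}, dispose of the components with $\kappa(L|_{\overline{S_i}}) \neq 1$ by Keel's gluing theorems, and in the fibration case exploit the finiteness of finitely generated subgroups of $\Aut$ of curves over $\Fp$ (Lemma \ref{lemma:fin_group}) to manufacture invariant, hence descending, sections. But there is a genuine gap at the heart of the $\kappa = 1$ case. For your ``comparison of $a$ and $b$ produces automorphisms'' step to have any content, the maps from the horizontal part of the normalized conductor to $Z$ must themselves be, up to universal homeomorphism, quotients by finite groups of automorphisms. The conditions $(\star)$ and $(\star\star)$ control only the gluing map $f_1 \colon \overline{C}^{(1)} \to \overline{D}$; they say nothing about the map $f_2 = g|_{\overline{C}}$ to $Z$. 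A component $\Gamma$ of the horizontal conductor could a priori have large degree over $Z_i$, in which case $a \colon \Gamma \to Z_i$ is a non-Galois covering, the pair $(a,b)$ is not related by any automorphism, the linear condition $a^*t = b^*t$ cuts the space of sections down with no control, and the whole mechanism collapses. You acknowledge that the conductor may meet a general fiber in more than one point, but you never bound by how much.

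The missing step --- and the place where the two bulleted hypotheses of the theorem actually enter --- is the computation that a general fiber $G_i$ of $g_i \colon \overline{S_i} \to Z_i$ meets the conductor in at most two points:
\[
0 \le G_i \cdot \bigl(L - (K_{\overline{S_i}} + \Delta_{\overline{S_i}})\bigr) = -G_i \cdot (K_{\overline{S_i}} + \Delta_{\overline{S_i}}) \le 2 - G_i \cdot \Delta_{\overline{S_i}},
\]
whence $\#(G_i \cap \mcC|_{\overline{S_i}}) \le G_i \cdot \Delta_{\overline{S_i}} \le 2$, using that $\Supp(\mcC^{(1)}) \subset \Supp(\lfloor \Delta_{\overline{S}} \rfloor)$. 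Only with this bound is every fiber of $\overline{C}^{\text{h}} \to Z$ of length at most two, which forces each connected piece of that map to be an isomorphism, a Frobenius, a quotient by an involution, or two sheets mapping isomorphically --- i.e.\ a group quotient followed by a universal homeomorphism, which is exactly what Lemma \ref{lemma:double} needs for both $f_1$ and $f_2$. It is telling that your argument never invokes the hypothesis $\Supp(\mcC^{(1)}) \subset \Supp(\lfloor \Delta_{\overline{S}} \rfloor)$ after the initial semiampleness of $L|_{\overline{S_i}}$ (which does not require it): that hypothesis exists precisely to supply this degree bound. With the bound added, your sketch becomes essentially the paper's proof.
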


\begin{proof}
We use the same notation as above. 
Let $\nu \colon \overline{S} \coloneq \bigsqcup \overline{S_i} \to S$ be the normalization of $S$. 
Set $\Delta_{\overline{S_i}} \coloneq \Delta_{\overline{S}}|_{\overline{S_i}}$. 
We know that $L|_{\overline{S_i}}$ are semiample from Theorem \ref{theorem:genshokurov}. 
Let $g_i \colon \overline{S_i} \to Z_i$ 
be the map associated to $L|_{\overline{S_i}}$. 
Set $g \colon \overline{S} \to Z $, 
where $g \coloneq \sqcup g_i$ and $Z \coloneq \bigsqcup Z_i$. 
If $\dim Z_i \not = 1$, then $g_i$ satisfies the conditions of Theorem \ref{theorem:gluing1}. 
Hence, we may assume that $\dim Z_i = 1$ for any $i$ 
by the inductive argument as in the proof of Theorem \ref{theorem:reduction}. 
\[\xymatrix{
\overline{C} \ar@/^6mm/[rrrr] ^{f_2} \ar[d] _{f_1} \ar[r] _{\text{norm.}\ } & \mcC _{\text{red}} \ar[r] & \mcC \ar@<-0.5mm>@{^(->}[r] \ar[d] & \overline{S} \ar[r] _g \ar[d] ^{\nu} & Z \\
\overline{D} \ar[r] ^{\text{norm.}\ } & \mcD _{\text{red}} \ar[r] & \mcD \ar@<-0.5mm>@{^(->}[r] & S &
}\]

By Remark \ref{remark:conductor}, it is sufficient to show that for any point $p \in \overline{S}$, 
there exist $m \ge 1$ and a section 
$s \in H^0 (\overline{S}, L^{\otimes m} | _{\overline{S}})$
such that $s|_{\mcC}$ descends to $\mcD$ and $s|_p \not = 0$. 
To obtain this, we prove the following claim. 

\begin{claim}\label{claim:conductor_normal}
For any finite set $F \subset \overline{S}$ of closed points of $\overline{S}$, 
we can find $m \ge 1$ and a section $s \in H^0 (\overline{S}, L^{\otimes m} | _{\overline{S}})$ such that 
$s|_{\overline{C}}$ descends to $\overline{D}$ and $s$ is nowhere vanishing on $F$. 
\end{claim}

First, we assume this claim and complete the proof of Theorem \ref{theorem:2dim}. 
Let $F' \subset \mcD _{\text{red}}$ be the conductor corresponding 
to the normalization $\overline{D} \to \mcD _{\text{red}}$. 
Let $F''$ be the image of $F'$ in $S$. Set $F \coloneq \nu^{-1}(F'') \cup \{ p \}$. 
Then $F$ is a finite set. 

By Claim \ref{claim:conductor_normal}, we can take
$s \in H^0 (\overline{S}, L^{\otimes m} | _{\overline{S}})$ and 
$s_{\overline{D}} \in H^0 (\overline{D}, L^{\otimes m} | _{\overline{D}})$ such that 
$s |_{\overline{C}} = s_{\overline{D}} |_{\overline{C}}$ and that $s$ is nowhere vanishing on $F$. 
By Lemma \ref{lemma:descent}, if we replace $s_{\overline{D}}$ by some power of it, then 
$s_{\overline{D}}$ descends to a section $s_{\mcD _{\text{red}}}$ on $\mcD _{\text{red}}$. 
Since $\mcD _{\text{red}} \to \mcD$ is a universal homeomorphism, 
$s_{\mcD _{\text{red}}}$ descends to a section $s_{\mcD}$ on $\mcD$, 
if we replace $s_{\overline{D}}$ by some power of it (cf.~ Theorem \ref{theorem:univhomeo}). 

It is sufficient to show that $s|_{\mcC} = s_{\mcD}|_{\mcC}$. 
By construction, $(s|_{\mcC})|_{\overline{C}} = (s_{\mcD}|_{\mcC})|_{\overline{C}}$ holds. 
Since $\overline{C} \to \mcC _{\text{red}}$ is surjective, 
we get $(s|_{\mcC})|_{\mcC _{\text{red}}} = (s_{\mcD}|_{\mcC})|_{\mcC _{\text{red}}}$. 
As $\mcC _{\text{red}} \to \mcC$ is a universal homeomorphism, if we replace $s$ by some power of it, 
then we get $s|_{\mcC} = s_{\mcD}|_{\mcC}$ (cf.~ Theorem \ref{theorem:univhomeo}). 
This completes the proof of Theorem \ref{theorem:2dim}. 

\begin{proof}[Proof of Claim \ref{claim:conductor_normal}]
Let $f_1$ and $f_2$ be as in the above diagram. 
For a one-dimensional scheme $X$, we write $X = X^{(0)} \sqcup X^{(1)}$, 
where $X^{(i)}$ is the $i$-dimensional part. 
Further, we write $\overline{C} ^{(1)} = \overline{C} ^{\text{h}} \sqcup \overline{C} ^{\text{v}}$, 
where $\overline{C} ^{\text{h}}$ is the $f_2$-horizontal part and 
$\overline{C} ^{\text{v}}$ is the $f_2$-vertical part. 

First, we claim that for any closed point $p \in Z$ 
the inverse image of $p$ by $\overline{C} ^{\text{h}} \to Z$ has length at most two. 
This can be proved as follows: 
by the nefness of $L-(K_{\overline{S_i}} + \Delta _{\overline{S_i}})$, we have
\[
0 \le G_i \cdot (L-(K_{\overline{S_i}} + \Delta _{\overline{S_i}}))
= - G_i \cdot (K_{\overline{S_i}} + \Delta _{\overline{S_i}}) \le 2 - G_i \cdot \Delta _{\overline{S_i}}, 
\]
where $G_i$ is a general fiber of $g_i \colon \overline{S_i} \to Z_i$. 
Since the one-dimensional part of $\mcC |_{\overline{S_i}}$ is contained in 
$\Supp (\lfloor \Delta _{\overline{S_i}} \rfloor)$, we have 
\[
\# (G_i \cap \mcC |_{\overline{S_i}}) \le G_i \cdot \Delta _{\overline{S_i}} \le 2. 
\]

Hence, $f_2 \colon \overline{C} ^{\text{h}} \to Z$ satisfies
the assumption of Lemma \ref{lemma:double}. 
Further, by the condition 
$(\star)$ and $(\star \star)$, 
$f_1 \colon \overline{C} ^{\text{h}} \to D'$ also satisfies the assumption of Lemma \ref{lemma:double}, 
where we define $D' \coloneq f_1 (\overline{C} ^{\text{h}})$. 
\[\xymatrix{
\overline{C} = \overline{C} ^{\text{h}} \sqcup \overline{C} ^{\text{v}} \sqcup \overline{C} ^{(0)} \ar[d] _{f_1} \ar[r] ^-{f_2} & Z & 
\hspace{0mm} &
\overline{C} ^{\text{h}} \ar[d] _{f_1} \ar[r] ^{f_2} & Z\\
\overline{D}=D' \sqcup (\overline{D} \setminus D') & \hspace{0mm} & & D' &
}\]

By Lemma \ref{lemma:double}, 
we can find sections 
$s_{\overline{D}} \in H^0(\overline{D}, L^{\otimes m} |_{\overline{D}})$ and 
$s_{Z} \in H^0(Z, L^{\otimes m} |_{Z})$ such that 
$s_{\overline{D}}|_{\overline{C} ^{\text{h}}}=s_{Z}|_{\overline{C} ^{\text{h}}}$ holds, $s_{Z}$ is nowhere vanishing on 
the finite set $g(F) \cup f_2(\overline{C} ^{\text{v}} \sqcup \overline{C} ^{(0)})$, and 
$s_{\overline{D}}$ is nowhere vanishing on $\overline{D} \setminus D'$. 
Since $L|_{\overline{C} ^{\text{v}} \sqcup \overline{C} ^{(0)}}$ is trivial, we have
$s_{\overline{D}} ^n|_{\overline{C} ^{\text{v}} \sqcup \overline{C} ^{(0)}}=s_{Z} ^n|_{\overline{C} ^{\text{v}} \sqcup \overline{C} ^{(0)}}$ 
for some $n \ge 1$ by Lemma $\ref{lemma:0dim}$.
Therefore, we get $s_{\overline{D}} ^n |_{\overline{C}} = s_{Z} ^n|_{\overline{C}}$ 
and this completes the proof of Claim \ref{claim:conductor_normal}. 
\end{proof}
\vspace{-2mm}
\end{proof}

Finally, we show the following lemma, which was used in the proof of Theorem \ref{theorem:2dim}. 
\begin{lemma}\label{lemma:double}
Let $X, Z_1, Z_2$ be disjoint unions of smooth proper curves, and
$f_1 \colon X \to Z_1$, $f_2 \colon X \to Z_2$ be finite surjective morphisms. 
Let $L_1$ and $L_2$ be line bundles on $Z_1$ and $Z_2$, respectively, 
such that $f_1 ^* L_1 = f_2 ^* L_2$. 
Suppose that $L \coloneq f_1 ^* L_1 = f_2 ^* L_2$ is semiample.  
Further, assume that each $f_i$ satisfies either of the following conditions:
\begin{itemize}
\item the restriction of $f_i$ to any connected component of $X$ is an isomorphism onto its image, or 
\item any fiber of $f_i$ has length at most two. 
\end{itemize}

Then, for any finite set $F \subset X$ of closed points of $X$, 
we can take $m \ge 1$ and a section $s \in H^0 (X, L^{\otimes m})$ 
such that $s$ is nowhere vanishing on $F$ and $s$ descends to both $Z_1$ and $Z_2$. 
\end{lemma}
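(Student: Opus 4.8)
The plan is to recast the lemma as a basepoint-freeness assertion and then to isolate the only genuine obstruction, a finite monodromy, which is killed precisely because the ground field is $\Fp$. First I would record the set-up. Since $f_i$ is a finite surjection onto the normal scheme $Z_i$ and $L=f_i^*L_i$ is semiample, Proposition \ref{proposition:cover} shows that each $L_i$ is semiample; hence for $m$ large and divisible the system $|L_i^{\otimes m}|$ is basepoint free and its pullback $f_i^*H^0(Z_i,L_i^{\otimes m})\subset H^0(X,L^{\otimes m})$ is a basepoint-free subsystem, consisting of exactly the sections of $L^{\otimes m}$ that descend along $f_i$. A section descends to \emph{both} $Z_1$ and $Z_2$ if and only if it lies in the intersection $V_m:=f_1^*H^0(Z_1,L_1^{\otimes m})\cap f_2^*H^0(Z_2,L_2^{\otimes m})$, so the lemma is equivalent to the statement that, after replacing $m$ by a suitable multiple, $V_m$ is basepoint free along the finite set $F$; the desired $s$ is then any member of $V_m$ nonvanishing on $F$.

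Next I would interpret $V_m$ as a descent (gluing) problem. A section lies in $V_m$ exactly when the two induced sections $s_1$ on $Z_1$ and $s_2$ on $Z_2$ satisfy $f_1^*s_1=f_2^*s_2$, i.e.\ when it descends to the projective curve $V$ obtained by gluing $Z_1$ and $Z_2$ along $X$ through $f_1$ and $f_2$; on $V$ the isomorphism $f_1^*L_1\cong f_2^*L_2$ glues $L_1$ and $L_2$ to a line bundle $M$ with $M|_{Z_i}=L_i$. This is where the two hypotheses enter: the condition that $f_i$ be an isomorphism on each component, or that its fibres have length at most two, bounds the length of the fibres of the maps from the gluing locus, which is exactly the geometric-connectivity input needed to realize $M$ as a genuine line bundle and to run gluing arguments in the spirit of Theorems \ref{theorem:gluing1} and \ref{theorem:gluing2}. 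Thus the lemma reduces to the semiampleness of $M$ on $V$ together with nonvanishing on the image of $F$.

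The main obstacle, and the only step beyond formal manipulation, is the monodromy. Transporting a local generator of $M^{\otimes m}$ once around a loop in the gluing graph of $V$ (alternately following $f_1$- and $f_2$-identifications) returns it multiplied by a scalar $\lambda\in\Fp^{\times}$, and a nowhere-vanishing section compatible with the gluing exists only when every such $\lambda$ equals $1$. Here working over $\Fp$ is indispensable: the entire configuration of curves, maps, and gluing data is already defined over some finite field $\mathbb{F}_{p^e}$, so all the scalars $\lambda$ lie in the single finite group $\mathbb{F}_{p^e}^{\times}$ (the relevant automorphisms generating a finite group by Lemma \ref{lemma:fin_group}, and numerically trivial restrictions being torsion by Proposition \ref{fact::torsionpic0}); replacing $m$ by a multiple of $p^e-1$ raises every $\lambda$ to the power $p^e-1$ and hence trivializes all of them simultaneously, exactly the mechanism of Lemma \ref{lemma:0dim} and Lemma \ref{lemma:descent}. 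With the monodromy trivialized, I would use the semiampleness of $L_1$ and $L_2$ to choose sections nonvanishing on $F$ and on the gluing locus and match them, after one further power, via Lemma \ref{lemma:0dim}; the resulting section lies in $V_m$ and is nowhere vanishing on $F$, which completes the argument.
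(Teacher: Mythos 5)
Your reformulation of the statement is fine: one needs $m$ and a section in $V_m := f_1^*H^0(Z_1,L_1^{\otimes m})\cap f_2^*H^0(Z_2,L_2^{\otimes m})$ that is nonvanishing on $F$. But the two steps that are supposed to produce such a section both have real gaps. First, the pushout $V$ and the glued line bundle $M$ are asserted rather than constructed; amalgamating $Z_1$ and $Z_2$ along the two finite covers $f_1,f_2$ is precisely the delicate question of \cite{Keel2003} and is not a formal consequence of the fibre bounds. More importantly, the hypotheses on the fibres of $f_i$ are not a ``geometric-connectivity input'' in the sense of Theorems \ref{theorem:gluing1} and \ref{theorem:gluing2}: in the paper they are used to factor each $f_i$ as a quotient $X\to X/G_i$ by a finite group followed by a universal homeomorphism $X/G_i\to Z_i$, via an explicit case analysis (isomorphism, Frobenius, quotient by an involution, identification of two isomorphic components). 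This factorization is what converts ``descends to $Z_i$'' into ``$G_i$-invariant, up to a Frobenius power'' via Theorem \ref{theorem:univhomeo}, and it is entirely absent from your argument.

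Second, the mechanism you propose for actually producing the section would fail. You choose $s_1$ and $s_2$ independently, nonvanishing on $F$ and on the gluing locus, and then ``match them via Lemma \ref{lemma:0dim}''; but that lemma only equates powers of sections that are \emph{nowhere} vanishing on a proper scheme, whereas the matching condition $f_1^*s_1=f_2^*s_2$ must hold on all of $X$, where the sections necessarily vanish as soon as $\deg L>0$ on some component. No power will make two independently chosen sections with different zero divisors agree. Similarly, trivializing the ``monodromy scalars'' by raising to the power $p^e-1$ is at best a necessary condition, not a construction, and the claim that all these scalars lie in a single $\mathbb{F}_{p^e}^{\times}$ would itself need justification, since the relevant loops pass through closed points of unbounded degree. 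The paper's actual mechanism is different and essential: the group $G=G_1G_2\subset\Aut(X)$ is finite by Lemma \ref{lemma:fin_group} (this is the one place $\Fp$ enters), one picks $s$ nonvanishing on the finite $G$-orbit of $F$, and the single section $s^G=\prod_{\sigma\in G}\sigma^*s$ is $G$-invariant, nonvanishing on $F$, and descends to both $Z_1$ and $Z_2$ after a further Frobenius power. This multiplicative norm over the finite group is the heart of the proof and has no counterpart in your proposal.
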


\begin{proof}
First, we prove that there exists a finite group $G_i$ acting on $X$ such that 
$X \to Z_i$ decomposes into the quotient morphism $X \to X/G_i$ and a universal homeomorphism $X/G_i \to Z_i$. 
\[\xymatrix{
&&& \ar[ld] \ar[llld]_{f_1} X \ar[rd] \ar[rrrd]^{f_2} &&&\\
Z_1 && \ar[ll]^{\hspace{-2mm} \text{univ.\ homeo}} X/G_1 && X/G_2 \ar[rr]_{\hspace{2mm} \text{univ.\ homeo}} && Z_2
}\]

This is trivial when the restriction of $f_i$ to any connected component of $X$ is an isomorphism. 
Indeed, it is sufficient to take $G_i$ such that 
it identifies the components with the same image under $f_i$. 
Then $X \to Z_i$ is isomorphic to the quotient morphism $X \to X/G_i$. 

For the second case, assume that any fiber of $f_i$ has length at most two. 
Let $Z_i '$ be a connected component of $Z_i$. Set $X' = f_i ^{-1} (Z_i ')$. 
There are four possibilities:
\begin{enumerate}
\item[(1)] $X'$ is connected and $X' \to Z_i '$ is an isomorphism. 
\item[(2)] $X'$ is connected and $X' \to Z_i '$ is the Frobenius map 
(this case may only occur for characteristic $p=2$). 
\item[(3)] $X'$ is connected and every fiber of $X' \to Z_i '$ has length two. 
There exists an involution $\iota \colon X' \to X'$ such that $X' \to Z_i '$ 
is the quotient by $\iota$. 
\item[(4)] $X'$ has two connected components $X_1 '$ and $X_2 '$. 
Further, $X' _1 \to Z_i '$ and $X' _2 \to Z_i '$ are isomorphisms. 
In this case, we have $X_1 ' \cong X_2 '$. 
\end{enumerate}
In the cases (3) and (4), 
we have a finite group $G'$ acting on $X'$ such that the morphism $X' \to Z_i '$ 
is isomorphic to the quotient morphism $X' \to X' / G'$. 

Hence, we have a finite group $G_i$ acting on $X$ such that the morphism $X \to Z_i$ decomposes in
the following way $X \to X/ G_i \to Z_i$, 
where $X \to X/G_i$ is the quotient morphism and $X/G_i \to Z_i$ 
is a universal homeomorphism 
(actually, if we restrict it to a connected component, it is either an isomorphism or the Frobenius map). 

Note that $L = g^* L$ for any $g \in G_i$. 
We claim that if $s \in H^0(X, L^{\otimes m})$ is $G_i$-equivariant, then
$s^ {p^e}$ descends to $Z_i$ for sufficiently large $e$. 
This is because $s$ descends to $X/G_i$ and $X/G_i \to Z_i$ is a universal homeomorphism 
(cf.~ Theorem \ref{theorem:univhomeo}). 

Let $G \coloneq G_1 G_2 \subset \Aut (X)$ be a composition of the groups and 
let $S \subset X$ be the $G$-orbit of the set $F$. 
By Lemma \ref{lemma:fin_group}, $G$ is a finite group, and therefore $S$ is a finite set. 

Take $m \ge 1$ and a section $s \in H^0 (X, L^{\otimes m})$ such that $s$ is nowhere vanishing on $S$. 
Set
\[
s^G \coloneq \prod _{\sigma \in G} \sigma ^* s \in H^0 (X, L^{\otimes m|G|}). 
\]
The section $s^G$ is $G_i$-invariant for each $i$ and nowhere vanishing on $F$. 
Hence, $(s^G)^{p^e}$ satisfies the statement of the lemma for sufficiently large $e \ge 1$. 
\end{proof}

\begin{remark}
The main issue of this section is related to the following question discussed by Keel in \cite{Keel2003}. 
\begin{question}
Let $L$ be a line bundle on a variety $X$ and let $p \colon \overline{X} \to X$ be the normalization of $X$. Assume that $p^*L$ is semiample. 
What additional assumptions are necessary for $L$ to be semiample?
\end{question}
\end{remark}

\section{Proof of Theorem \ref{theorem:main}}\label{section:proof}

\indent
In this section, we prove Theorem \ref{theorem:main} using Theorem \ref{theorem:reduction} and 
Theorem \ref{theorem:2dim}. 

\begin{proof}[Proof of Theorem \ref{theorem:main}]
Let $S \coloneq \lfloor \Delta \rfloor$. 
By Theorem \ref{theorem:reduction}, it is sufficient to show that $L |_{\Supp(S)}$ is semiample. 
Note that in both case (1) and case (2), all the coefficients of $\Delta$ are at most one. 

By the adjunction formula (Proposition \ref{proposition:adjunction}), 
if we define $\Delta_{\overline{S}}$ on $\overline{S}$ so that 
$(K_X + \Delta)|_{\overline{S}} = K_{\overline{S}} + \Delta_{\overline{S}}$, then 
$\Delta_{\overline{S}}$ satisfies the conditions in the statement of Theorem \ref{theorem:2dim}. 

In the case (2), that is, the case when each component $S_i$ of $S$ is normal, 
$S$ clearly satisfies the condition $(\star)$. 
In the case (1), that is, the case when $(X, \Delta)$ is log canonical, 
the surface $S$ is regular or nodal in codimension one (see \cite[Corollary 2.32]{Kollar}), 
and so 
$S$ satisfies the condition $(\star \star)$ 
(see \cite[Claim 1.41.1]{Kollar} or \cite[Lemma 3.4, 3.5]{TanakaAbundanceForSlc}).

Thus, we can complete the proof by using Theorem \ref{theorem:2dim}. 
\end{proof}

We easily deduce Corollary \ref{corollary:abundance}. 
\begin{proof}[Proof of Corollary \ref{corollary:abundance}]
It is enough to take $L = 2(K_X + \Delta)$ and $L = -(K_X + \Delta)$, respectively.
\end{proof}

\section{Examples}\label{section:example}
Theorem \ref{theorem:main} does not hold if we do not impose any conditions on $\Delta$. 
It is in fact possible to construct a nef and big line bundle $L$ on a smooth threefold $X$ 
such that $L-(K_X+\Delta)$ is nef and big for $\Delta \geq 0$, but $L$ is not semiample. 
We construct such $L$ and $\Delta$ in the following way.
\begin{example} \label{example:counterex}
Let $L$ be a nef and big line bundle on a smooth threefold, 
which is not semiample (see an example of Totaro in \cite[Theorem 7.1]{Totaro}). 
Since $L$ is big, we can write $L = A + E$ 
for an ample $\mbQ$-Cartier $\mbQ$-divisor $A$ 
and an effective $\mbQ$-Cartier $\mbQ$-divisor $E$. 
Take $\Delta = mE$ for $m \in \mbN$ big enough. 
Then $mL - (K_X+\Delta)$ is an ample Cartier divisor, 
and so the pair $L' \coloneq mL$ and $\Delta$ is an example, 
which we were looking for.  
\end{example}

Theorem \ref{theorem:main} does not hold 
over algebraically closed fields $k \not = \Fp$ even in the two-dimensional case. 
\begin{example}[Tanaka {\cite[Example 19.3]{Tanaka}}]\label{example:tanaka}
Let $C_0 \subset \mbP ^2$ be an elliptic curve in $\mbP ^2$, and 
let $p_1, \ldots, p_{10} \in C_0$ be ten general points on $C_0$. 
Let $X$ be the blowup of $\mbP ^2$ along these ten points, and $C$ the proper transform of $C_0$. 
Note that $K_X + C \sim 0$ and $C^2 = -1$. 

Take an ample divisor $H$ on $X$, and
set $L \coloneq H + a C$, where $a \coloneq H \cdot C > 0$. 
Note that $L$ is a nef and big divisor. 
Further, $(X, C)$ is log canonical, and $L - (K_X + C)$ is also nef and big. 
Nevertheless, $L$ is not semiample if the base field is not $\Fp$. This is because $L \cdot C = 0$, 
but the elliptic curve $C$ is not contractible. 
\end{example}

Corollary \ref{corollary:abundance} (2) also does not hold 
over algebraically closed fields $k \not = \Fp$. 
\begin{example}[Gongyo {\cite[Example 5.2]{Gongyo}}]\label{example:gongyo}
Let $S$ be the blowup of $\mbP ^2$ along nine general points. 
Note that $- K_S$ is nef but not semiample if the base field is not $\Fp$. 
Take a very ample divisor $H$ on $S$, 
and set $X \coloneq \mbP _S (\mcO _S \oplus \mcO _S (-H))$. 
Let $E$ be the tautological section of $\mcO _S \oplus \mcO _S (-H)$. 
Since $E \cong S$, it follows that $-K_E$ is not semiample. 

Then, $(X, E)$ is log canonical, and  
$L \coloneq - (K_X + E)$ is nef and big by the nefness of $- K_S$ (for details, see \cite[Example 5.2]{Gongyo}). 
Nevertheless, $L$ is not semiample, because $L |_E  = - K_E$ is not semiample. 
\end{example}

\begin{bibdiv}
 \begin{biblist*}
\bib{Artin}{article}{
   author={Artin, Michael},
   title={Some numerical criteria for contractability of curves on algebraic
   surfaces},
   journal={Amer. J. Math.},
   volume={84},
   date={1962},
   pages={485--496},
}

\bib{Atiyah}{article}{
   author={Atiyah, M. F.},
   title={Vector bundles over an elliptic curve},
   journal={Proc. London Math. Soc. (3)},
   volume={7},
   date={1957},
   pages={414--452},
}

\bib{Birkar}{article}{
   author={Birkar, Caucher},
   title={Existence of flips and minimal models for 3-folds in char p},
   eprint={arXiv:1311.3098v1}
}

\bib{BM3}{article}{
   author={Bombieri, E.},
   author={Mumford, D.},
   title={Enriques' classification of surfaces in char. $p$. III},
   journal={Invent. Math.},
   volume={35},
   date={1976},
   pages={197--232},
}

\bib{BM2}{article}{
   author={Bombieri, E.},
   author={Mumford, D.},
   title={Enriques' classification of surfaces in char. $p$. II},
   book={
      publisher={Iwanami Shoten, Tokyo},
   },
   date={1977},
   pages={23--42},
}

\bib{CMM}{article}{
   author={Cascini, Paolo},
   author={M\textsuperscript{c}Kernan, James},
   author={Musta{\c{t}}{\u{a}}, Mircea},
   title={The augmented base locus in positive characteristic},
   journal={Proc. Edinb. Math. Soc. (2)},
   volume={57},
   date={2014},
   number={1},
   pages={79--87},
}

\bib{fujino1}{article}{
   author={Fujino, Osamu},
   title={Abundance theorem for semi log canonical threefolds},
   journal={Duke Math. J.},
   volume={102},
   date={2000},
   number={3},
   pages={513--532},
}

\bib{Gongyo}{article}{
   author={Gongyo, Yoshinori},
   title={On weak Fano varieties with log canonical singularities},
   journal={J. Reine Angew. Math.},
   volume={665},
   date={2012},
   pages={237--252},
}

\bib{Hartshorne}{book}{
   author={Hartshorne, Robin},
   title={Algebraic geometry},
   note={Graduate Texts in Mathematics, No. 52},
   publisher={Springer-Verlag},
   place={New York},
   date={1977},
}

\bib{Keel}{article}{
   author={Keel, Se{\'a}n},
   title={Basepoint freeness for nef and big line bundles in positive
   characteristic},
   journal={Ann. of Math. (2)},
   volume={149},
   date={1999},
   number={1},
   pages={253--286},
}

\bib{Keel2003}{article}{
   author={Keel, Se{\'a}n},
   title={Polarized pushouts over finite fields},
   note={Special issue in honor of Steven L. Kleiman},
   journal={Comm. Algebra},
   volume={31},
   date={2003},
   number={8},
   pages={3955--3982},
}

\bib{Kollar}{book}{
   author={Koll{\'a}r, J{\'a}nos},
   title={Singularities of the minimal model program},
   series={Cambridge Tracts in Mathematics},
   volume={200},
   date={2013},
}

\bib{KollarMori}{book}{
   author={Koll{\'a}r, J{\'a}nos},
   author={Mori, Shigefumi},
   title={Birational geometry of algebraic varieties},
   series={Cambridge Tracts in Mathematics},
   volume={134},
   publisher={Cambridge University Press, Cambridge},
   date={1998},
}

\bib{FaA}{collection}{
   author={Koll{\'a}r (with 14 coauthors), J{\'a}nos},
   title={Flips and abundance for algebraic threefolds},
   publisher={Soci\'et\'e Math\'ematique de France, Paris},
   date={1992},
}

\bib{Lehmann}{article}{
   author={Lehmann, Brian},
   title={Numerical triviality and pullbacks},
   eprint={arXiv:1109.4382v3}
}

\bib{Liedtke}{article}{
   author={Liedtke, Christian},
   title={Algebraic surfaces in positive characteristic},
   conference={
      title={Birational geometry, rational curves, and arithmetic},
   },
   book={
      publisher={Springer, New York},
   },
   date={2013},
   pages={229--292},
}

\bib{Liu}{book}{
   author={Liu, Qing},
   title={Algebraic geometry and arithmetic curves},
   series={Oxford Graduate Texts in Mathematics},
   volume={6},
   publisher={Oxford University Press, Oxford},
   date={2002},
}

\bib{Mumford}{article}{
   author={Mumford, David},
   title={Enriques' classification of surfaces in ${\rm char}\ p$. I},
   book={
      publisher={Univ. Tokyo Press, Tokyo},
   },
   date={1969},
   pages={325--339},
}

\bib{Mumford:AV}{book}{
   author={Mumford, David},
   title={Abelian varieties},
   series={Tata Institute of Fundamental Research Studies in Mathematics},
   volume={5},
   date={2008},
}

\bib{Silverman}{book}{
   author={Silverman, Joseph H.},
   title={The arithmetic of elliptic curves},
   series={Graduate Texts in Mathematics},
   volume={106},
   edition={2},
   publisher={Springer, Dordrecht},
   date={2009},
}

\bib{Tanaka}{article}{
   author={Tanaka, Hiromu},
   title={Minimal models and abundance for positive characteristic log surfaces},
   eprint={arXiv:1201.5699v2}
}

\bib{TanakaAbundanceForSlc}{article}{
   author={Tanaka, Hiromu},
   title={Abundance theorem for semi log canonical surfaces in positive characteristic},
   eprint={arXiv:1301.6889v2}
}

\bib{Totaro}{article}{
   author={Totaro, Burt},
   title={Moving codimension-one subvarieties over finite fields},
   journal={Amer. J. Math.},
   volume={131},
   date={2009},
   number={6},
   pages={1815--1833},
}

\bib{Xu}{article}{
   author={Xu, Chenyang},
   title={On base point free theorem of threefolds in positive characteristic},
   eprint={arXiv:1311.3819v1}
}

\end{biblist*}
\end{bibdiv}
\end{document}